\newtheorem{thm}{Theorem}[section]
\numberwithin{thm}{section}
\newtheorem{cor}[thm]{Corollary}
\newtheorem{lem}[thm]{Lemma}
\newtheorem{prop}[thm]{Proposition}
\newtheorem{defn}[thm]{Definition}
\newtheorem{rem}[thm]{Remark}
\numberwithin{equation}{section}
 \newcommand\R{\mathbb{R}}
 \newcommand\E{\mathbb{E}}
 \newcommand\HH{\mathcal{H}}
 \DeclareMathOperator*{\var}{Var}
 \DeclareMathOperator*{\Dom}{Dom}
\begin{document}

\title{Local times for systems of non-linear stochastic heat equations
}


\author{Brahim Boufoussi         \and
        Yassine Nachit 
}


\institute{Brahim Boufoussi \at
              Department of Mathematics, Faculty of Sciences Semlalia, Cadi Ayyad University, 2390 Marrakesh, Morocco \\
              \email{boufoussi@uca.ac.ma}           
           \and
           Yassine Nachit \at
              Department of Mathematics, Faculty of Sciences Semlalia, Cadi Ayyad University, 2390 Marrakesh, Morocco \\
              \email{yassine.nachit.fssm@gmail.com}
}

\date{Received: date / Accepted: date}

\maketitle

\begin{abstract}
We consider $u(t,x)=(u_1(t,x),\cdots,u_d(t,x))$ the solution to a system of non-linear stochastic heat equations in spatial dimension one driven by a $d$-dimensional space-time white noise. We prove that, when $d\leq 3$, the local time $L(\xi,t)$ of $\{u(t,x)\,,\;t\in[0,T]\}$ exists and $L(\bullet,t) $ belongs a.s. to the Sobolev space $ H^{\alpha}(\R^d)$ for $\alpha<\frac{4-d}{2}$, and when $d\geq 4$, the local time does not exist. We also show joint continuity and establish H\"{o}lder conditions
for the local time of $\{u(t,x)\,,\;t\in[0,T]\}$. These results are then used to investigate the irregularity
of the coordinate functions of $\{u(t,x)\,,\;t\in[0,T]\}$.  Comparing to similar results obtained for the linear stochastic heat equation (i.e., the solution is Gaussian), we believe that our results are sharp. Finally, we get a sharp estimate for the partial derivatives of the joint density of $(u(t_1,x)-u(t_0,x),\cdots,u(t_n,x)-u(t_{n-1},x))$, which is a new result and of independent interest.
\keywords{Local time \and Stochastic heat equation \and Fourier transform \and Malliavin calculus \and Space-time white noise}
 \subclass{60H15 \and 60J55 \and 60H07}
\end{abstract}

\section{Introduction and main results}
We consider the following system of non-linear stochastic heat equations
\begin{equation}\label{SHE}
  \frac{\partial u_k}{\partial t}(t,x) = \frac{\partial^2 u_k}{\partial x^2}(t,x)+b_k(u(t,x))+\sum_{l=1}^{d}\sigma_{k,l}(u(t,x))\dot{W}^l(t,x),
\end{equation}
with Neumann boundary conditions
$$u_k(0,x)=0,\qquad \frac{\partial u_k(t,0)}{\partial x}=\frac{\partial u_k(t,1)}{\partial x}=0,$$
for $1\leq k\leq d$, $t\in [0,T]$, $x\in [0,1]$, where $u:=(u_1,\cdots,u_d)$. Here, $\dot{W}=(\dot{W}^1,\cdots,\dot{W}^d)$ is a vector of
$d$-independent space-time white noises on $[0,T]\times [0,1]$.
 We put $b=(b_k)_{1\leq k\leq d} $ and $\sigma=(\sigma_{k,l})_{1\leq k, l\leq d}$. Following Walsh \cite{Walsh},  we will give, in Section \ref{Stochastic heat equation}, a rigorous formulation of the formal equation \eqref{SHE}. Let us state the following hypotheses on the coefficients $\sigma_{k,l}$ and $b_k$ of the system of non-linear stochastic heat equations $\eqref{SHE}$:\\
\textbf{A1}\; For all $1\leq k,l\leq d$,  the functions $\sigma_{k,l}$ and $b_k$ are bounded and  infinitely differentiable  such that the partial
derivatives of all orders are bounded.\\
\textbf{A2}\; The matrix $\sigma$ is uniformly elliptic i.e., there exists $\rho>0$ such that for all $x\in \R^d$ and $z\in \R^d$ with $\|z\|=1$, we have $\|\sigma(x)z\|^2\geq \rho^2$ (where $\|\cdot\|$ is the Euclidean norm on $\R^d$).

The objective of this paper is to investigate existence and regularity of the local time of $\{u(t,x)\,;\;t\in [0,T]\}$ the solution to Eq. \eqref{SHE}. The challenge to study local times of $\{u(t,x)\,;\;t\in [0,T]\}$ is twofold: on one hand, $\{u(t,x)\,;\;t\in [0,T]\}$ is neither a Gaussian process nor a stable process in general, on the other hand, the coordinate processes $u_1,\cdots,u_d$ are not independent. As far as we know, no one has studied the local time of an $\R^d$-valued process $X$ with coordinate processes $X_1,\cdots,X_d$ which are not independent, even in the Gaussian case. The only local times results that we are aware of for non-Gaussian processes are \cite{LouOuyang,Nourdin}.
By conditional Malliavin calculus approach, Lou and Ouyang have established in \cite{LouOuyang}
an upper bound of Gaussian type for the partial derivatives of the n-point joint density of the solution to a stochastic differential equation driven
by fractional Brownian motion. Their result is similar to Theorem \ref{estimation density}(b) below. Furthermore, they have used that result as an alternative to the classical local nondeterminism condition (LND for short), which is often used to investigate local times of Gaussian random fields -- for more details on the LND condition one can see \cite{Berman73}. Due to this, the authors in  \cite{LouOuyang} have proved the existence
and regularity of the local times of stochastic differential equations driven by fractional Brownian motions.
Moreover, in \cite{Nourdin}, Kerchev \textit{et al.} have investigated the existence and regularity of the local time of the Rosenblatt process via Berman's method.
Their proof is based
on a spectral analysis of arbitrary linear combinations of integral operators, which derive from the representation of the Rosenblatt process as an element in the second chaos.
The approach adopted here is quite different from the two previous works. Indeed, we mainly use two new tools: we introduce some techniques in the Malliavin calculus for adapted stochastic processes (see Section \ref{Malliavin calculus for adapted processes}) and also a condition involving a local estimation of the characteristic  function of the increments of a given process, which we call $\alpha$-local nondeterminism ($\alpha$-LND for short),
see the definition in section \ref{The local times}.  The role of the $\alpha$-LND condition in our investigation is the same as that of the well-known local nondeterminism condition used in the Gaussian framework. As part of our arguments, by using those techniques of the Malliavin calculus, we show that the systems of non-linear stochastic heat equations satisfy the $\frac{1}{4}$-LND condition. With this in mind and through Berman's method, we conclude the existence and joint continuity of the local time. Roughly speaking, we believe that the approach presented here can be used to investigate the local times of adapted stochastic processes that are smooth in the Malliavin sense. The main result of our studying is summarized as follows:
\begin{thm}\label{local time SHE}
  Let $u(t,x)$ be the solution to Eq. \eqref{SHE}, and $x\in (0,1)$ be fixed.
  \begin{description}
    \item[(i)] Almost surely, when $d\leq 3$, the local time $L(\xi,t)$ of the process $\{u(t,x)\,,\;t\in[0,T]\}$ exists for any fixed $t$, moreover, $L(\bullet,t) $ belongs to the Sobolev space $ H^{\alpha}(\R^d)$ of index $\alpha<\frac{4-d}{2}$; and when $d\geq 4$, the local time does not exist in $L^2(\mathbb{P}\otimes \lambda_d)$ for any $t$, where $\lambda_d$ is the Lebesgue measure on $\R^d$.
    \item[(ii)] Assume $d\leq 3$, the local time of the process $\{u(t,x)\,,\;t\in[0,T]\}$ has a version, denoted by $L(\xi,t)$, which is a.s. jointly continuous in $(\xi,t)$, and which is $\gamma$-H\"{o}lder continuous in $t$, uniformly in $\xi$, for all $\gamma<1-\frac{d}{4}$: there exist two random variables $\eta$ and $\delta$ which are almost surely finite and positive such that
        $$\sup_{\xi\in \R^d}|L(\xi,t)-L(\xi,s)|\leq \eta\, |t-s|^{\gamma},$$
        for all $s,t\in [0,T]$ such that $|t-s|<\delta$.
  \end{description}
\end{thm}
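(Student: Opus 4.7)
My plan is to reduce both parts of the theorem to pointwise decay estimates for the characteristic function of $u(t,x)-u(s,x)$ and, more generally, for the joint characteristic function of increment vectors, obtained from Malliavin calculus via the $\alpha$-LND property introduced later in the paper. Formally, if $L(\xi,t)$ denotes the occupation density of $\{u(r,x)\,;\,r\in[0,t]\}$, Fourier inversion gives
$$\hat L(y,t) \;=\; \int_0^t e^{-i\langle y,u(r,x)\rangle}\,dr, \qquad L(\bullet,t)\in H^{\alpha}(\R^d) \iff \int_{\R^d}(1+\|y\|^2)^\alpha\,|\hat L(y,t)|^2\,dy < \infty.$$
The strategy converts the $\alpha$-LND into the two kinds of moment bounds needed, respectively, for $L^2$-existence in the Sobolev scale in part (i) and for joint continuity in part (ii).

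For part (i), Fubini yields
$$\E\|L(\bullet,t)\|_{H^\alpha}^2 \;=\; \int_{\R^d}(1+\|y\|^2)^\alpha \int_0^t\int_0^t \E\bigl[e^{i\langle y,u(s,x)-u(r,x)\rangle}\bigr]\,dr\,ds\,dy,$$
and the $\alpha$-LND supplies, for every $N\geq 1$, an estimate of the form
$$\Bigl|\E\bigl[e^{i\langle y,u(s,x)-u(r,x)\rangle}\bigr]\Bigr| \;\leq\; \frac{C_N}{\bigl(1+\|y\|\,|s-r|^{1/4}\bigr)^{N}},$$
which is the non-Gaussian analogue of $\exp(-c\|y\|^2|s-r|^{1/2})$. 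Scaling $y\mapsto|s-r|^{-1/4}z$ collapses the inner integral to a constant times $|s-r|^{-(d+2\alpha)/4}$, and the remaining double time integral converges precisely when $(d+2\alpha)/4<1$, i.e.\ when $\alpha<(4-d)/2$; this handles $d\leq 3$. For $d\geq 4$, a matching lower bound on $|\E[e^{i\langle y,u(s,x)-u(r,x)\rangle}]|$ (again Malliavin-based) shows $\int_{\R^d}\E|\hat L(y,t)|^2\,dy=+\infty$, so $L(\bullet,t)\notin L^2(\mathbb{P}\otimes\lambda_d)$.

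For part (ii), I would apply a multi-parameter Kolmogorov continuity argument to $(\xi,t)\mapsto L(\xi,t)$. Using Fourier inversion,
$$L(\xi,t)-L(\xi,s) \;=\; \frac{1}{(2\pi)^d}\int_{\R^d} e^{-i\langle y,\xi\rangle}\int_s^t e^{-i\langle y,u(r,x)\rangle}\,dr\,dy,$$
so a $(2m)$-th moment bound reduces, after ordering times $s<r_1<\cdots<r_{2m}<t$ and rewriting each $u(r_j,x)$ as a telescoping sum of increments, to estimating the joint characteristic function of the increment vector $(u(r_1,x)-u(s,x),\ldots,u(r_{2m},x)-u(r_{2m-1},x))$. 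This is precisely the content of the sharp density estimate announced in the abstract, and after scaling each $y_j$ by $(r_j-r_{j-1})^{-1/4}$ one obtains
$$\E|L(\xi,t)-L(\xi,s)|^{2m} \;\leq\; C_m\,|t-s|^{m\gamma}$$
for every $\gamma<1-d/4$, provided $m$ is large. An analogous bound in $\xi$ comes from inserting $e^{-i\langle y,\xi_1\rangle}-e^{-i\langle y,\xi_2\rangle}$ and using $|1-e^{i\theta}|\leq|\theta|^\beta$. A joint Kolmogorov/Garsia--Rodemich--Rumsey argument then produces a jointly continuous version which is $\gamma$-H\"older in $t$ uniformly in $\xi$ for every $\gamma<1-d/4$.

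The hard part is the $\alpha$-LND estimate itself, and especially its higher-order counterpart: the derivative bounds on the joint density of the increment vector $(u(t_1,x)-u(t_0,x),\ldots,u(t_n,x)-u(t_{n-1},x))$. Because $\sigma$ need not be diagonal the coordinates $u_1,\ldots,u_d$ are coupled, and because the equation is nonlinear the law is non-Gaussian, so one cannot invoke the classical Gaussian local-nondeterminism machinery. Instead one must perform iterated Malliavin integration by parts against the inverse Malliavin covariance matrix of the full increment vector and control its smallest eigenvalue uniformly in the time partition, showing it behaves like the Gaussian variance scale $(t_j-t_{j-1})^{1/2}$. Once this sharp density estimate --- which the authors flag as of independent interest --- is secured, the Fourier-scaling computations above are essentially mechanical adaptations of the Gaussian local-nondeterminism theory.
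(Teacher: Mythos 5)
Your overall plan for the existence part ($d\leq 3$) and for the joint continuity in part (ii) matches the paper's: Fubini plus Fourier inversion reduces everything to decay estimates on the joint characteristic function of the increment vector, and those estimates come from iterated Malliavin integration by parts against the $\pi_n$-Malliavin matrix of the increments. The paper's Lemma \ref{lem H t1 tn inequality for Z} gives $\|H^\beta_{\pi_n}(Z,1)\|_{0,2}\leq C\prod_\theta (t_{i_\theta}-t_{i_\theta-1})^{-1/4}$, which translates into the coordinatewise $\frac14$-LND bound of Theorem \ref{thm alpha LND}; combined with the same region-splitting/scaling you describe and the Ehm combinatorial identity \eqref{Ehm equality}, one gets \eqref{Ineq f s r} and the moment bounds of Lemma \ref{lem Kolmogorov}, then concludes via Berman's machinery. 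Your isotropic form $|\E e^{i\langle y,u(s,x)-u(r,x)\rangle}|\leq C_N(1+\|y\||s-r|^{1/4})^{-N}$ is a clean packaging of the paper's coordinatewise bound and leads to the same exponents, so this portion is essentially the same argument.

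The gap is in your treatment of $d\geq 4$. You claim that ``a matching lower bound on $|\E[e^{i\langle y,u(s,x)-u(r,x)\rangle}]|$ (again Malliavin-based) shows $\int_{\R^d}\E|\hat L(y,t)|^2\,dy=+\infty$.'' There is no useful pointwise lower bound on the \emph{modulus} of a characteristic function over all of $\R^d$: a characteristic function oscillates and can have arbitrarily many zeros, and the Malliavin machinery naturally produces \emph{upper} bounds on $|\E e^{i\langle y,\cdot\rangle}|$ (through $\|H^\beta\|$) and \emph{lower} bounds on the \emph{density}, not on $|\hat\mu|$. Moreover, when the local time fails to exist, $\hat L$ is not even defined, so starting from $\int_{\R^d}\E|\hat L|^2$ is circular; the Fubini interchange you implicitly rely on also fails precisely when the integral diverges. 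The paper avoids all of this by invoking the Geman--Horowitz criterion (Theorem \ref{No LT}): $L(\bullet,t)\in L^2(\mathbb{P}\otimes\lambda_d)$ iff $\liminf_{\varepsilon\to 0}\varepsilon^{-d}\int_0^t\int_0^t\mathbb{P}[\|u(s,x)-u(r,x)\|\leq\varepsilon]\,dr\,ds<\infty$, and then lower-bounds the small-ball probability using the Gaussian-type lower bound on the density $p_{s,t,x}(y)\geq c(t-s)^{-d/4}\exp(-\|y\|^2/(c(t-s)^{1/2}))$ of Theorem \ref{estimation density}(a). After a change of variables this $\liminf$ is shown to diverge for all $d\geq 4$, including the borderline case $d=4$ where a logarithm appears. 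You should replace your characteristic-function lower bound step with this small-ball probability route; the density lower bound is the correct object to lower-bound, and the Geman--Horowitz criterion is the correct bridge to non-existence.
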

As a consequence, one can get a result on the behavior of the coordinate functions of the solution to Eq. \eqref{SHE}.
\begin{cor}\label{cor nonHolder}
  Let $u(t,x)$ be the solution to Eq. \eqref{SHE}. Assume $d\leq 3$. Then for each $x\in (0,1)$, almost surely, all coordinate functions of $\{u(t,x)\,,\;t\in[0,T]\}$
   are nowhere H\"{o}lder continuous of order greater than $\frac{1}{4}$.
\end{cor}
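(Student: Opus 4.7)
The plan is to run the classical Berman--Geman--Horowitz trade-off between the regularity of the local time and the regularity of the path, combining Theorem \ref{local time SHE}(ii) with the well-known fact that, under hypothesis \textbf{A1}, each coordinate $u_j(\cdot,x)$ of the mild solution to \eqref{SHE} is almost surely Hölder continuous of every order strictly less than $\frac{1}{4}$ on $[0,T]$. I would work on the intersection of the two corresponding countable families of probability-one events: for every $\gamma_n \uparrow 1-d/4$ there is, by Theorem \ref{local time SHE}(ii), a random $\eta_n > 0$ with $\sup_{\xi\in\R^d} |L(\xi,t) - L(\xi,s)| \leq \eta_n |t-s|^{\gamma_n}$ for $|t-s|$ small; and for every $\epsilon_m \downarrow 0$ and every $j$ there is a random $M_{j,m}$ with $|u_j(s,x) - u_j(t,x)| \leq M_{j,m}|s-t|^{1/4-\epsilon_m}$ on $[0,T]$.

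The core of the argument is by contradiction. Suppose on this event that, for some coordinate $u_k(\cdot,x)$, some $t_0 \in [0,T]$ and some $\beta > 1/4$, one has $|u_k(s,x) - u_k(t_0,x)| \leq C |s-t_0|^\beta$ in a neighborhood of $t_0$. For $I_r := [t_0-r, t_0+r]$ with $r$ small, this yields $\diam(u_k(I_r,x)) \leq 2Cr^\beta$, while for $j\neq k$ the global Hölder estimate gives $\diam(u_j(I_r,x)) \leq M_{j,m}(2r)^{1/4-\epsilon_m}$. Hence $u(I_r,x)\subset\R^d$ is contained in a box of $d$-dimensional Lebesgue measure at most $K\, r^{\beta + (d-1)(1/4 - \epsilon_m)}$ for some random $K$. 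The occupation-density formula
$$2r \;=\; \int_{\R^d} \bigl(L(\xi, t_0+r) - L(\xi, t_0-r)\bigr)\, d\xi,$$
together with the fact that $L(\cdot,t_0+r)-L(\cdot,t_0-r)$ is supported in $u(I_r,x)$, then delivers
$$\sup_{\xi\in\R^d} \bigl(L(\xi, t_0+r) - L(\xi, t_0-r)\bigr) \;\geq\; \frac{2r}{K\, r^{\beta+(d-1)(1/4-\epsilon_m)}}.$$

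Comparing this lower bound with the upper bound $\eta_n (2r)^{\gamma_n}$ from Theorem \ref{local time SHE}(ii) gives $r^{\,1 - \beta - (d-1)(1/4-\epsilon_m) - \gamma_n} \leq K \eta_n\, 2^{\gamma_n}$ for all sufficiently small $r$. Sending $r \to 0$ forces $\beta + (d-1)(1/4-\epsilon_m) + \gamma_n \leq 1$, after which letting $m \to \infty$ (so $\epsilon_m \downarrow 0$) and $n \to \infty$ (so $\gamma_n \uparrow 1-d/4$) collapses this to $\beta \leq 1/4$, contradicting $\beta > 1/4$. I do not anticipate any genuine obstacle specific to the non-Gaussian, dependent-coordinates framework of this paper: all the analytic difficulty has been absorbed into Theorem \ref{local time SHE}, and the only care needed here is the bookkeeping to realize both families of estimates on a single full-probability event, after which the deterministic inequality above is entirely routine.
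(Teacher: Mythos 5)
Your argument is correct, and the conclusion it reaches is exactly the paper's, but the route is slightly different from the paper's one-line proof. The paper simply invokes Theorem~\ref{non Holder} (Adler's Theorem 8.7.1) together with the H\"older estimate of Theorem~\ref{local time SHE}\textbf{(ii)}: with $\beta<1-\tfrac{d}{4}$ there, Adler's theorem yields nowhere H\"older of order greater than $(1-\beta)/d$, and letting $\beta\uparrow 1-\tfrac d4$ gives the exponent $\tfrac14$. You instead reprove the needed implication directly from the occupation-density formula, bounding $\lambda_d(u(I_r,x))$ by a box of volume $K\,r^{\beta+(d-1)(1/4-\epsilon_m)}$; the key extra ingredient you import is the a.s.\ global H\"older-$(\tfrac14-\epsilon)$ modulus of each coordinate, which is available from \eqref{kolmoggorov SHE}. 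That ingredient is not used by the cited Adler theorem (which needs only continuity of the sample path), so your proof is more hands-on but also makes the coordinatewise conclusion completely transparent: the exponent arithmetic $\beta+(d-1)(\tfrac14-\epsilon_m)+\gamma_n\leq 1$ collapses cleanly to $\beta\leq\tfrac14$ after $m,n\to\infty$, and the countable intersections of full-measure events are handled correctly. Two minor technical remarks, neither affecting validity: the support of $L(\cdot,t_0+r)-L(\cdot,t_0-r)$ is the closure $\overline{u(I_r,x)}$ rather than $u(I_r,x)$ itself, but you bound it by a closed box so this is immaterial; and the lower bound $\sup_\xi\bigl(L(\xi,t_0+r)-L(\xi,t_0-r)\bigr)\geq 2r/\lambda_d(\text{box})$ uses the occupation formula $\int_{\R^d}L(\xi,I_r)\,d\xi=2r$ with $f\equiv 1$, which holds for the jointly continuous version supplied by Theorem~\ref{local time SHE}\textbf{(ii)}. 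In short: correct, self-contained, somewhat more elementary than the paper's citation of Adler's theorem, at the mild cost of appealing to \eqref{kolmoggorov SHE}.
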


When we were investigating local times, we got the following theorem which is interesting in its own right.

Let $x\in (0,1)$ be fixed, and let $\pi_n=(t_1,\cdots,t_n)$ with $0=t_0<t_1<\cdots<t_n\leq T$. We denote by $p_{\pi_n,x}(\xi)$, where $\xi=(\xi_{j,l}\,,\; 1\leq j\leq n\,,\; 1\leq l\leq d)\in\R^{n\times d}$, the density of the $\R^{n\times d}$-valued random vector $(u(t_1,x)-u(t_0,x),\cdots,u(t_n,x)-u(t_{n-1},x))$, where $u(t_i,x)-u(t_{i-1},x)=(u_1(t_i,x)-u_1(t_{i-1},x),\cdots,u_d(t_i,x)-u_d(t_{i-1},x))$, for $i=1,\cdots,n$. Put also $p_{s,t,x}(y)$ the density of the $\R^d$-valued random vector $(u_1(t,x)-u_1(s,x),\cdots,u_d(t,x)-u_d(s,x))$. Set $\|\cdot\|$ for the Euclidean norm on $\R^d$. For all $\xi=(\xi_{j,l}\,,\; 1\leq j\leq n\,,\; 1\leq l\leq d)\in\R^{n\times d}$ and $m=(m_{j,l}\,,\; 1\leq j\leq n\,,\; 1\leq l\leq d)$ where $m_{j,l}$, for $j=1,\cdots, n$ and $l=1,\cdots, d$, are nonnegative integers, we introduce
$$\partial^{m}_{\xi}=\prod_{j=1}^{n}\prod_{l=1}^{d}\left(\frac{\partial}{\partial\xi_{j,l}}\right)^{m_{j,l}}.$$
\begin{thm}\label{estimation density}
 Assume \normalfont{\textbf{A1}} and \normalfont{\textbf{A2}}. Then we get the following:
 \begin{description}
   \item[(a)]\label{Gaussian lower bound} There exists a constant $c>0$ such that for any $x\in (0,1)$, $0\leq s<t\leq T$, and $y\in \R^{d}$,
   \begin{equation}\label{lower bound for the density}
     p_{s,t,x}(y)\geq \frac{c}{(t-s)^{d/4}}\exp\left(-\frac{\|y\|^2}{c(t-s)^{1/2}}\right).
   \end{equation}
   \item[(b)]\label{Gaussian upper bound} Let $n$ be a positive integer and $m_{i,k}$, for $i=1,\cdots, n$ and $k=1,\cdots, d$, be nonnegative integers. Then, there exists a positive constant $c$ (may depend on $n$ and $m_{i,k}$) such that for all $x\in (0,1)$,  $\pi_n=(t_1,\cdots,t_n)$ with $0=t_0<t_1<\cdots<t_n\leq T$, and  $\xi=(\xi_{j,l}\,,\;1\leq j\leq n\,,\; 1\leq l\leq d)\in \R^{n\times d}$,
       \begin{equation}\label{upper bound derivative dennsity}
            \left|\partial^{m}_{\xi} \,p_{\pi_n,x}(\xi)\right|
               \leq c\,\prod_{i=1}^{n} \frac{1}{(t_i-t_{i-1})^{(d+\sum_{k=1}^{d}m_{i,k})/4}}\exp\left(-\frac{\|\xi_i\|^2}{c\,(t_i-t_{i-1})^{1/2}}\right),
       \end{equation}
       where $\xi_i=(\xi_{i,1},\cdots,\xi_{i,d})$  and $m=(m_{i,k}\,,\; 1\leq i\leq n\,,\; 1\leq k\leq d)$.
 \end{description}
\end{thm}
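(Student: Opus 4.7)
The strategy throughout is Malliavin calculus: under \textbf{A1} the random vectors involved belong to $\mathbb{D}^{\infty}$, and under \textbf{A2} their Malliavin covariance matrices are non-degenerate. The two parts require complementary techniques: Malliavin integration by parts for part (b) and an iterative chaining argument for part (a).

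For part (b), write $X=(X_1,\dots,X_n)$ with $X_i=u(t_i,x)-u(t_{i-1},x)\in\R^d$, and set $\Delta_i=t_i-t_{i-1}$ and $\epsilon_{j,l}=\mathrm{sign}(\xi_{j,l})$. The Malliavin integration-by-parts formula yields
\begin{equation*}
\partial^{m}_{\xi}p_{\pi_n,x}(\xi)=C_m\,\E\Bigl[\prod_{j=1}^{n}\prod_{l=1}^{d}\1_{\{\epsilon_{j,l}(X_{j,l}-\xi_{j,l})>0\}}\,H_m\Bigr],
\end{equation*}
where $H_m$ is a Skorohod integral involving the Malliavin derivatives of $X$ and the entries of the inverse Malliavin matrix $\gamma_X^{-1}$. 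Two estimates are needed. First, a sharp scaling bound
\begin{equation*}
\|H_m\|_{L^p(\Omega)}\leq C\prod_{i=1}^{n}\Delta_i^{-(d+M_i)/4},\qquad M_i=\sum_{k=1}^{d}m_{i,k},
\end{equation*}
based on moment estimates of $(\det\gamma_X)^{-1}$ and of $DX$, together with the fact that $D_{r,z}X_i$ is essentially concentrated in $r\in(t_{i-1},t_i)$ so that $\gamma_X$ is block-almost-diagonal with $i$-th block of order $\Delta_i^{1/2}$. Second, using the Markov property for \eqref{SHE} in $C([0,1])$ and the conditional sub-Gaussian tail
\begin{equation*}
\mathbb{P}(\|X_i\|\geq a\mid \F_{t_{i-1}})\leq C\exp\bigl(-a^{2}/(c\Delta_i^{1/2})\bigr),
\end{equation*}
obtained from the exponential martingale inequality applied to the Walsh stochastic integral representation of $X_i$, iterating the conditioning gives the product joint tail
\begin{equation*}
\mathbb{P}\Bigl(\bigcap_{i=1}^{n}\{\|X_i\|\geq \|\xi_i\|\}\Bigr)\leq C\prod_{i=1}^{n}\exp\bigl(-\|\xi_i\|^{2}/(c\Delta_i^{1/2})\bigr).
\end{equation*}
Choosing the signs $\epsilon_{j,l}$ so that the indicator event in the IBP formula is contained in the tail event above, H\"{o}lder's inequality applied to the IBP formula yields \eqref{upper bound derivative dennsity}, with a (possibly smaller) constant $c$.

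For part (a), we follow the iterative Gaussian lower-bound technique of Kohatsu-Higa and Nualart--Quer-Sardanyons. Partition $[s,t]$ into $N$ equal subintervals, set intermediate targets $y_k=(k/N)\,y$, and at each step use non-degeneracy of the conditional Malliavin matrix of the associated increment to obtain a local Gaussian lower bound at scale $((t-s)/N)^{1/4}$. Chaining these $N$ local bounds along the sequence $y_0,\dots,y_N$ and optimizing over $N$ produces \eqref{lower bound for the density}.

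The main obstacle is the sharp bound on $\|H_m\|_{L^p}$ in part (b). Because the increments $X_i$ are coupled through the common past of $u$, the Malliavin matrix $\gamma_X$ is not exactly block-diagonal; one must decompose $DX_i$ into a dominant component supported on $(t_{i-1},t_i)$, whose Malliavin matrix scales like $\Delta_i^{1/2}$, plus a small remainder, and then invert the full matrix via a Schur-complement or perturbation argument, propagating the product structure through the iterated Skorohod integrals defining $H_m$.
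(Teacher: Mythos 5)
Your outline for part (a) essentially matches the paper, which proves \eqref{lower bound for the density} via Kohatsu-Higa's ``uniformly elliptic random vector'' framework exactly along the lines you describe (partitioning $[s,t]$, conditional nondegeneracy at scale $(\Delta/N)^{1/4}$, chaining; cf.\ \cite{Higa} and \cite[Section 5]{DalangKhoshnevisanNualartmultiplicative}).

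For part (b) there is a real gap. You assert that the standard Malliavin matrix $\gamma_X$ of $Z=(u(t_1,x)-u(t_0,x),\dots,u(t_n,x)-u(t_{n-1},x))$ is ``block-almost-diagonal'' and propose to invert it by a Schur-complement/perturbation argument. This premise is not justified: for $j<i$, $DX_j$ is supported on $[0,t_j]\times[0,1]$, while $DX_i$ restricted to that set equals $D(u(t_i,x))-D(u(t_{i-1},x))$ and is not small; by Proposition \ref{Malliavin norm increment SHE} the off-diagonal block $\langle DX_i,DX_j\rangle_{\HH}$ has $L^p$-size $O(\Delta_i^{1/4}\Delta_j^{1/4})$, which is of the \emph{same} order as the diagonal blocks $O(\Delta_i^{1/2})$ when the $\Delta_i$'s are comparable, so no uniform smallness is available and the perturbation argument does not go through as stated. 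The paper sidesteps this entirely by defining a different, \emph{nonsymmetric} $\pi_n$-Malliavin matrix $\Gamma_{Z,\pi_n}$ (Definition \ref{defnotation}) whose $(i,j)$ block uses the inner product $\langle\cdot,\cdot\rangle_{\HH_{t_{i-1},t_i}}$ restricted to $[t_{i-1},t_i]\times[0,1]$. Since $D_{r,v}u(t_j,x)=0$ for $r>t_j$ (see \eqref{derivative order 1 equal 0}), this matrix is \emph{exactly} block upper-triangular a.s., so $\det\Gamma_{Z,\pi_n}=\prod_{i=1}^{n}\det\Gamma^{i,i}$ with no error term, and each diagonal factor $\Gamma^{i,i}$ only sees the window $[t_{i-1},t_i]$ and hence scales as $\Delta_i^{1/2}$ (Theorem \ref{thm estimation det moins 1}). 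This requires a bespoke integration-by-parts formula (Proposition \ref{prop integration by parts}) and density representation (Theorem \ref{existence of density general}) built around $\Gamma_{Z,\pi_n}$ rather than the classical one you invoke; that construction is the key technical novelty of the paper and is missing from your argument. Your treatment of the Gaussian-tail factor (conditioning plus sub-Gaussian increments, with sign choices to match the indicator) is fine and matches the role played by \cite[Ineq.\ (6.2)]{DalangKhoshnevisanNualartmultiplicative} in the paper.
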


In \cite{DalangKhoshnevisanNualartmultiplicative}, using Malliavin calculus techniques, Dalang \textit{et al}. have established a Gaussian-type
lower bound for the one-point density of the solution $u(t,x)$ and a Gaussian-type upper
bound for the two-point density of $(u(s,y),u(t,x))$, in order to get upper and lower bounds for the hitting probabilities of
the process $\{u(t,x)\,;\;t\in \R^+,\;x\in [0,1]\}$. Therefore, we see that Theorem \ref{estimation density} generalizes in some sense   \cite[Theorem 1.1]{DalangKhoshnevisanNualartmultiplicative}.

Now, recall that a subset $A$ of $\R^d$ is polar for an $\R^d$-valued stochastic process $(X_t)_{t\in [0,T]}$ if
$\mathbb{P}\left[X_t\in A \text{ for some } t\right]=0.$
From \cite{DalangKhoshnevisanNualartmultiplicative} we know that singletons are not polar for $\{u(t,x)\,,\;t\in[0,T]\}$ when $d\leq 3$, and are
polar for $d\geq 5$. The critical case of dimension $d=4$ is open. We think that there is a deep connection between the polarity and the existence of the local time. Indeed, it is easy to prove that if almost all  singletons are polar, then the local time does not exist. As a consequence the local time does not exist for $\{u(t,x)\,,\;t\in[0,T]\}$ when $d\geq 5$, but in section \ref{42}, we will give another proof of that, and we will even show that the local time does not exist in the critical dimension $d=4$.

Finally, let us briefly explain that the $\alpha$-LND property (see Definition \ref{alphha LND}) can be a consequence of the integration by parts formula (see Proposition \ref{prop integration by parts}) as follows:
$$\E\left[\partial^{k}_{y}\, e^{i\sum_{j=1}^{n}\left<\xi_j,u(t_j,x)-u(t_{j-1},x)\right>}\right]=\E\left[e^{i\sum_{j=1}^{n}\left<\xi_j , u(t_j,x)-u(t_{j-1},x)\right>}H^{\beta}_{\pi_n}(Z\,,\,1)\right],$$
where $\left<\cdot,\cdot\right>$ is the Euclidean inner product on $\R^d$, $k=(k_{j,l},\;1\leq j\leq n,\,1\leq l\leq d)$, $\xi=(\xi_{j,l},\;1\leq j\leq n,\,1\leq l\leq d)$, and $Z=(u(t_1,x)-u(t_0,x),\cdots,u(t_n,x)-u(t_{n-1},x)).$ Our observation is that by the above equality we have,
$$\prod_{h=1}^{n}\prod_{l=1}^{d}|\xi_{h,l}|^{k_{h,l}}\left|\E\left[e^{i\sum_{j=1}^{n}\left<\xi_j,u(t_j,x)-u(t_{j-1},x)\right>}\right]\right|\leq \E\left[\left|H^{\beta}_{\pi_n}(Z\,,\,1)\right|\right].$$
One of the main technical efforts in this paper is to estimate $\E[|H^{\beta}_{\pi_n}(Z\,,\,1)|]$.

The rest of the paper is arranged as follows. In the second section, we give some preliminary results on local times, classical Malliavin calculus, and the stochastic heat equation. The third section is devoted to introducing some new tools in the Malliavin calculus that we will use in the fourth and fifth section in order to prove respectively  Theorem \ref{estimation density} and Theorem \ref{local time SHE}.

Finally,  we mention that constants in our proofs may change from line to line.

\section{Preliminaries}
\subsection{The local times}\label{The local times}
This section is devoted to briefly give some aspects of the theory of local times. For more information on the subject, we refer to the classical paper of  Geman and Horowitz  \cite{GemanHorowitz}.

Let $(\theta_t)_{t\in [0,T]}$ be a Borel function with values in $\R^d$. For any Borel set $B\subseteq [0,T]$, the occupation measure of $\theta$ on $B$ is given by the following measure on $\R^d$:
$$\nu_B(\bullet)=\lambda\{t\in B\,;\;\theta_t\in \bullet\},$$
where $\lambda$ is the Lebesgue measure. When $\nu_B$ is absolutely continuous with respect to $\lambda_d$ (the Lebesgue measure on $\R^d$), we say that the local time of $\theta$ on $B$ exists and it is defined, $L(\bullet,B)$, as the Radon-Nikodym derivative of $\nu_B$ with respect to $\lambda_d$, i.e., for almost every $x$,
$$L(x,B)=\frac{d\nu_B}{d\lambda_d}(x).$$
In the above, we call $x$ the space variable and $B$ the time variable. We write $L(x,t)$ (resp. $L(x)$) instead of $L(x,[0,t])$ (resp. $L(x,[0,T])$).

The local time satisfies the following occupation formula: for any Borel set $B\subseteq [0,T]$, and for every measurable bounded function $f:\R^d\to \R$,
$$\int_{B}f(\theta_s)ds=\int_{\R^d}f(x)L(x,B)dx.$$

The deterministic function $\theta$ can be chosen to be the sample path of  an $\R^d$-valued separable stochastic process $X=(X_t)_{t\in [0,T]}$ with $X_0=0$ a.s. In this regard, we say that the process $X$ has a local time (resp. square integrable local time) if for almost all $\omega$, the trajectory $t\mapsto X_t(\omega)$ has a local time (resp. square integrable local time).

We investigate the local time via Berman's approach. The idea is to derive properties of $L(\bullet,B)$ from the integrability properties of the Fourier transform of the sample paths of $X$.

Let us state the following hypotheses on the integrability of the characteristic function of $X$:\\
\textbf{B1}\;$$\int_{\R^d}\int_{0}^{T}\int_{0}^{T}\E\left[e^{i\left<u,X_t-X_s\right>}\right]dt\,ds\,du<\infty.$$
\textbf{B2}\; For every even integer $m\geq 2$,
$$\int_{(\R^d)^m}\int_{[0,T]^m}\left|\E\left[\exp\left(i\sum_{j=1}^{m}\left<u_j,X_{t_j}\right>\right)\right]\right|\prod_{j=1}^{m}dt_j\prod_{j=1}^{m}du_j<\infty.$$

Recall the following crucial result in \cite{Berman69a} (see also \cite[Lemma 3.1]{Berman69b}):
\begin{thm}
  Assume \normalfont{\textbf{B1}}. Then the process $X$ has a square integrable local time. Moreover, we have almost surely, for all Borel set $B\subseteq [0,T]$, and for almost every $x$,
  \begin{equation}\label{local time L2}
    L(x,B)=\frac{1}{(2\pi)^d}\int_{\R^d}e^{-i\left<u,x\right>}\int_B e^{i\left<u,X_t\right>}dt\,du.
  \end{equation}
\end{thm}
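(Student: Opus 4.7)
The plan is to identify the occupation measure $\nu_B$ through its Fourier transform and apply Plancherel's theorem. Set $\hat{\nu}_B(u) := \int_B e^{i\langle u, X_t\rangle}\,dt$, which is (up to sign convention) the Fourier transform of the random measure $\nu_B$. The strategy is to show that under \textbf{B1} one has $\hat{\nu}_{[0,T]}\in L^2(\R^d)$ almost surely, to deduce that $\nu_{[0,T]}$ is absolutely continuous with respect to $\lambda_d$ with a square-integrable density, and to obtain the representation \eqref{local time L2} via Fourier inversion. The passage from $B=[0,T]$ to an arbitrary Borel $B\subseteq [0,T]$ will follow from the monotonicity $\nu_B\leq\nu_{[0,T]}$, which keeps all relevant statements true on a single almost sure event.

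The first and most delicate step is to prove
\begin{equation*}
\E\bigl[\,\|\hat{\nu}_{[0,T]}\|_{L^2(\R^d)}^2\,\bigr]
 = \int_{\R^d}\int_0^T\int_0^T \E\bigl[e^{i\langle u, X_t-X_s\rangle}\bigr]\,dt\,ds\,du,
\end{equation*}
which together with \textbf{B1} gives $\hat{\nu}_{[0,T]}\in L^2(\R^d)$ almost surely. The main obstacle is that the integrand $e^{i\langle u, X_t-X_s\rangle}$ has modulus one and is not jointly absolutely integrable on $\R^d\times[0,T]^2$, so Fubini cannot be applied naively. I would resolve this by performing the $(t,s)$-integration first: the resulting function $u\mapsto \E[\,|\hat{\nu}_{[0,T]}(u)|^2\,] = \int_0^T\int_0^T\E[e^{i\langle u, X_t-X_s\rangle}]\,dt\,ds$ is real and non-negative, so Fubini--Tonelli legitimately swaps it with the outer expectation and with the $du$-integration.

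Once $\hat{\nu}_{[0,T]}$ lies in $L^2(\R^d)$ almost surely, Plancherel's theorem applied to the finite random measure $\nu_{[0,T]}$ implies that $\nu_{[0,T]}$ is absolutely continuous with respect to $\lambda_d$ and that its Radon--Nikodym derivative equals the inverse Fourier transform of $\hat{\nu}_{[0,T]}$, which yields \eqref{local time L2} for $B=[0,T]$. For a general Borel $B\subseteq [0,T]$, the domination $\nu_B\leq\nu_{[0,T]}$ gives $\nu_B\ll\lambda_d$ with density pointwise bounded by $L(\cdot,[0,T])$ and therefore in $L^2(\R^d)$; Plancherel run in reverse shows $\hat{\nu}_B\in L^2(\R^d)$, and a final Fourier inversion delivers \eqref{local time L2} for arbitrary $B$ on the same almost sure event produced in the first step.
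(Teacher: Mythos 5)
This theorem is not proved in the paper; it is quoted from Berman (reference \cite{Berman69a}), so there is no in-paper proof against which to compare. Your proposal follows the standard Fourier-analytic route that Berman himself uses: compute $\E\bigl[\|\hat\nu_{[0,T]}\|_{L^2}^2\bigr]$ by Tonelli in the $u$-variable (legitimate because $\E\bigl[|\hat\nu_{[0,T]}(u)|^2\bigr]=\int_0^T\int_0^T\E\bigl[e^{i\langle u,X_t-X_s\rangle}\bigr]\,dt\,ds$ is real and non-negative, being the expectation of a modulus squared), deduce from \textbf{B1} that $\hat\nu_{[0,T]}\in L^2(\R^d)$ almost surely, apply Plancherel to obtain absolute continuity of $\nu_{[0,T]}$ with an $L^2$ density, and transfer to arbitrary Borel $B$ by domination $\nu_B\le\nu_{[0,T]}$. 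All of these steps are sound.

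One technical point you should make explicit: the inversion integral $\frac{1}{(2\pi)^d}\int_{\R^d}e^{-i\langle u,x\rangle}\hat\nu_B(u)\,du$ in \eqref{local time L2} is not in general absolutely convergent, since one only knows $\hat\nu_B\in L^2$, not $\hat\nu_B\in L^1$. It must therefore be read as the $L^2$ inverse Fourier transform (the $L^2$-limit of the truncated integrals over $[-N,N]^d$), which is exactly what defines the density for almost every $x$; this is precisely why the statement says ``for almost every $x$'' and why the paper subsequently introduces the regularized versions $L_N(x,t)$. With that understood, the argument is complete, and the passage from $B=[0,T]$ to general $B$ on a single null set is correct because the a.s. event is determined solely by $\hat\nu_{[0,T]}\in L^2$, and the exceptional $x$-set in the inversion is permitted to depend on $B$.
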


In \eqref{local time L2}, $L(x,B)$ is not a stochastic process. Following Berman \cite{Berman69b} we construct a version of the local time, which is a stochastic process.\\
The below theorem is given in Berman \cite[Theorem 4.1]{Berman69b} for $d=1$ and $m=2$, so we will omit the proof.
\begin{thm}
  Assume \normalfont{\textbf{B1}} and \normalfont{\textbf{B2}}. Put for all integer $N\geq 1$,
  $$L_N(x,t)=\frac{1}{(2\pi)^d}\int_{[-N,N]^d}e^{-i\left<u,x\right>}\int_{0}^{t} e^{i\left<u,X_s\right>}ds\,du.$$
  Then there exists a stochastic process $\tilde{L}(x,t)$ separable in the $x$-variable, such that for each even integer $m\geq 2$,
  \begin{equation}\label{L tilde}
    \lim_{N\to \infty}\sup_{(x,t)\in \R^d\times [0,T]}\E\left[|L_N(x,t)-\tilde{L}(x,t)|^m\right]=0.
  \end{equation}
\end{thm}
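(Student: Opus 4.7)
The plan is to prove that $\{L_N(\cdot,\cdot)\}_N$ is Cauchy in $L^m(\mathbb{P})$ uniformly in $(x,t)\in\R^d\times[0,T]$, and then to pass to a separable version of the limit. Throughout let $m\geq 2$ be an even integer and $M>N\geq 1$.

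First, writing $|L_M(x,t)-L_N(x,t)|^m=[(L_M-L_N)\overline{(L_M-L_N)}]^{m/2}$ and applying Fubini gives
\begin{equation*}
|L_M(x,t)-L_N(x,t)|^m=\frac{1}{(2\pi)^{dm}}\int_{A_{N,M}^m}\int_{[0,t]^m}\exp\left(i\sum_{j=1}^m\epsilon_j\langle u_j,X_{s_j}-x\rangle\right)\prod_{j=1}^m du_j\,ds_j,
\end{equation*}
where $A_{N,M}=[-M,M]^d\setminus[-N,N]^d$ and $\epsilon_j=+1$ for $j\leq m/2$, $\epsilon_j=-1$ otherwise. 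Taking expectations, factoring out the pure phase $e^{-i\langle\sum_j\epsilon_j u_j,x\rangle}$ of modulus $1$, and enlarging the domain from $A_{N,M}^m\times[0,t]^m$ to $(\R^d\setminus[-N,N]^d)^m\times[0,T]^m$ yields
\begin{equation*}
\E[|L_M(x,t)-L_N(x,t)|^m]\leq\frac{1}{(2\pi)^{dm}}\int_{(\R^d\setminus[-N,N]^d)^m}\int_{[0,T]^m}\left|\E\left[\exp\left(i\sum_j\epsilon_j\langle u_j,X_{s_j}\rangle\right)\right]\right|\prod_j du_j\,ds_j,
\end{equation*}
a bound independent of $x$, $t$, and $M$. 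By \textbf{B2} (after the sign substitutions $u_j\mapsto\epsilon_j u_j$), the integrand lies in $L^1((\R^d)^m\times[0,T]^m)$, so dominated convergence drives this upper bound to $0$ as $N\to\infty$.

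Next, the uniform $L^m$ Cauchy property produces a limit: select $N_k\uparrow\infty$ with $\sum_k\sup_{x,t}\E[|L_{N_{k+1}}-L_{N_k}|^m]<\infty$; for each fixed $(x,t)$, Borel-Cantelli then gives almost-sure convergence of $L_{N_k}(x,t)$, and we set $\tilde L(x,t):=\limsup_k L_{N_k}(x,t)$ pointwise on $\Omega$. Fatou's lemma together with the uniform Cauchy bound yields $\sup_{(x,t)}\E[|L_N(x,t)-\tilde L(x,t)|^m]\to 0$. Since $\tilde L$ is indexed by a separable metric space, Doob's theorem provides a modification separable in the $x$-variable, which we take as our version of $\tilde L$; this is natural because for each $N$ the function $L_N(\cdot,t)$ is already continuous on $\R^d$, the $u$-integration being over the compact box $[-N,N]^d$.

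The main obstacle is the first step: organizing the expansion of $|L_M-L_N|^m$ so that the joint characteristic function of $X$ at $m$ time points appears explicitly, and verifying that bounding the pure phase by $1$ really produces a quantity controlled uniformly in $x$, $t$, $M$ by the $L^1$ object governed by \textbf{B2}. Once that Cauchy estimate is in hand, the extraction of the limit and the selection of a separable version are essentially standard measure-theoretic bookkeeping.
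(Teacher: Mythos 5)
The paper does not supply a proof of this theorem: it cites Berman \cite[Theorem 4.1]{Berman69b} (stated there for $d=1$, $m=2$) and remarks that the proof is omitted. Your argument is the natural multi-dimensional, higher-moment adaptation of Berman's computation and its core is correct. The key step is exactly right: expand $|L_M-L_N|^m$ as a $2m$-fold Fubini integral in $(u_j,s_j)$ with alternating signs coming from $(L_M-L_N)^{m/2}\overline{(L_M-L_N)}^{m/2}$, take expectations (Fubini is justified because the $u$-domain is bounded and $|e^{i\cdot}|=1$), bound the pure phase $e^{-i\langle\sum_j\epsilon_j u_j,x\rangle}$ by $1$ to remove all $x$-dependence, enlarge the domain to $(\R^d\setminus[-N,N]^d)^m\times[0,T]^m$ to remove the $t$- and $M$-dependence, and then use \textbf{B2} (which, after the harmless sign change $u_j\mapsto\epsilon_j u_j$, gives an $L^1$ majorant) together with dominated convergence to send the tail to $0$ uniformly.

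One technical imprecision in the extraction of the limit: the condition $\sum_k\sup_{x,t}\E[|L_{N_{k+1}}-L_{N_k}|^m]<\infty$ is not by itself what Borel--Cantelli needs to give almost-sure convergence of the sequence $L_{N_k}(x,t)$; having $|L_{N_{k+1}}-L_{N_k}|\to 0$ a.s.\ does not imply convergence. You need a decay of $\sup\E[|L_{N_{k+1}}-L_{N_k}|^m]$ fast enough that, after Markov's inequality, $\sum_k\mathbb{P}[|L_{N_{k+1}}-L_{N_k}|>a_k]<\infty$ for a summable sequence $a_k$ (e.g.\ take the $\sup\E$ bound $\leq 2^{-km}$ and $a_k=2^{-k/2}$). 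This is easily arranged from the uniform Cauchy estimate, so the gap is cosmetic rather than structural. A cleaner route avoids Borel--Cantelli altogether: for each fixed $(x,t)$ the sequence $L_N(x,t)$ is Cauchy in $L^m(\mathbb{P})$, hence converges there by completeness to some $\tilde L(x,t)$; letting $M\to\infty$ in the uniform Cauchy bound (using Fatou, or just the $L^m$ convergence) gives $\sup_{(x,t)}\E[|L_N(x,t)-\tilde L(x,t)|^m]\to 0$ directly. The final appeal to Doob's separability theorem to obtain a version separable in $x$ is standard and correct.
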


\begin{thm}[Theorem 4.3 in \cite{Berman69b}]
  Let $\tilde{L}(x,t)$ be given by $\eqref{L tilde}$.
  If the stochastic process $\{\tilde{L}(x,t),\, x\in\R^d\}$ is almost surely continuous, then it is a continuous (in the $x$-variable) version of the local time on $[0,t]$.
\end{thm}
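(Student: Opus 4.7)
My strategy is to identify $\tilde L(\cdot,t)$ with the Radon--Nikodym derivative of the occupation measure $\nu_{[0,t]}$ by verifying the occupation density identity
\begin{equation*}
\int_0^t f(X_s)\,ds=\int_{\R^d}f(x)\tilde L(x,t)\,dx\quad\text{almost surely}
\end{equation*}
for a rich enough test class of functions $f$. Combined with the hypothesis that $\tilde L(\cdot,t)$ is a.s.\ continuous in $x$, this will force $\tilde L(\cdot,t)$ to coincide, a.e.\ in $x$, with the (a.e.-defined) density $L(\cdot,[0,t])$ from \eqref{local time L2}; the continuous representative is then, by definition, a continuous version of the local time on $[0,t]$.

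The technical core rests on three observations. First, Fubini applied to the bounded integrand on $[-N,N]^d\times[0,t]$ rewrites $L_N(x,t)=\int_0^t K_N(X_s-x)\,ds$, where $K_N(y)=\prod_{j=1}^d \sin(Ny_j)/(\pi y_j)$ is the product Dirichlet kernel, with $\hat K_N=\1_{[-N,N]^d}$. Second, for any Schwartz $f$ whose Fourier transform is supported in $[-M,M]^d$, the identity $\widehat{K_N*f}=\1_{[-N,N]^d}\hat f=\hat f$, valid as soon as $N\geq M$, yields $K_N*f=f$ exactly, and hence
\begin{equation*}
\int_{\R^d} f(x)L_N(x,t)\,dx=\int_0^t (K_N*f)(X_s)\,ds=\int_0^t f(X_s)\,ds\quad\text{a.s.}
\end{equation*}
for every $N\geq M$. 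Third, Minkowski's integral inequality together with \eqref{L tilde} gives
\begin{equation*}
\E\Bigl[\Bigl|\int f(x)(L_N(x,t)-\tilde L(x,t))\,dx\Bigr|^{m}\Bigr]\leq \|f\|_1^{m}\sup_{(x,t)}\E\bigl[|L_N(x,t)-\tilde L(x,t)|^{m}\bigr]\longrightarrow 0.
\end{equation*}
Passing to the limit in the previous identity, I obtain $\int_0^t f(X_s)\,ds=\int f(x)\tilde L(x,t)\,dx$ a.s.\ for each such $f$, and then, by discarding a single null set associated with a countable dense subfamily of the test class, simultaneously a.s.\ for every $f$ in it.

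Since the chosen test class separates locally finite Borel measures via Fourier inversion, I conclude that the measures $\nu_{[0,t]}$ and $\tilde L(x,t)\,dx$ coincide a.s., which makes $\tilde L(\cdot,t)$ an a.s.-continuous version of $L(\cdot,[0,t])$. The delicate point \emph{will be} the second observation above: the Dirichlet kernel $K_N$ is sign-changing and is not an approximate identity in the $L^1$ sense, so a convergence statement of the form $K_N*f\to f$ is out of reach by standard means. The trick that resolves this is precisely the restriction to test functions $f$ with compactly supported Fourier transform, for which $K_N*f=f$ holds \emph{exactly}, not approximately, once $N$ is large enough; everything else in the proof is then a routine interplay between \eqref{L tilde} and a Fourier-inversion density argument.
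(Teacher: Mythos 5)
Your overall strategy --- recast $L_N(\cdot,t)$ as a time integral of the Dirichlet kernel $K_N$ evaluated at $X_s-x$, exploit the Paley--Wiener observation that $K_N*f=f$ exactly as soon as $\hat f$ is supported in $[-N,N]^d$, and pass to the $L^m$ limit via \eqref{L tilde} and Minkowski's integral inequality --- is clever, and each of the three observations you isolate is, in itself, correct. (The paper cites Berman rather than reproving this, so the comparison below is against a careful self-contained argument.)

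The gap is in the last step. On a single full-measure event you obtain the occupation identity $\int_0^t f(X_s)\,ds=\int_{\R^d} f(x)\tilde L(x,t)\,dx$ for $f$ running over a countable family $\mathcal D$ of bandlimited Schwartz functions. To pass from this to $\nu_{[0,t]}=\tilde L(\cdot,t)\,\lambda_d$ you need $f\mapsto\int f\tilde L\,dx$ to be continuous on a space in which $\mathcal D$ is dense; but that requires $\tilde L(\cdot,t)$ to define a tempered distribution, i.e.\ some a priori growth control at infinity, and almost sure continuity alone does not supply it. The difficulty is compounded by the fact that bandlimited functions extend to entire functions of exponential type and hence are never compactly supported: $\mathcal D$ cannot localize, so the unknown behavior of $\tilde L$ at infinity genuinely contaminates the constraint on every compact set, and ``separation of locally finite measures via Fourier inversion'' is not available. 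A way to close the gap that bypasses the occupation formula entirely: under \textbf{B1}, $L(\cdot,[0,t])\in L^2(\R^d)$ a.s.\ with Fourier transform $u\mapsto\int_0^t e^{i\langle u,X_s\rangle}\,ds$, so Plancherel gives $L_N(\cdot,t)\to L(\cdot,[0,t])$ in $L^2(\R^d)$ a.s.; on the other hand, for any bounded Borel $A$, \eqref{L tilde} gives $\E\int_A|L_N(x,t)-\tilde L(x,t)|^2\,dx\le|A|\sup_x\E|L_N(x,t)-\tilde L(x,t)|^2\to0$, so $L_N\to\tilde L$ in $L^2(A\times\Omega)$; extracting a.e.-convergent subsequences yields $\tilde L(\cdot,t)=L(\cdot,[0,t])$ a.e.\ on $A$ for a.e.\ $\omega$, and letting $A$ exhaust $\R^d$ finishes, with no growth hypothesis on $\tilde L$. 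As written, your step from the tested identity to the identity of measures is not justified.
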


In order to overcome the problem caused by the dependence of the negligible sets on $x$, $t$, and $\omega$, we will look for a jointly continuous version of the local time. For this end, we have by \eqref{L tilde}, for all $x,y\in \R^d$, $t, h\in [0,T]$ such that $t+h\in [0,T]$, and even integer $m\geq 2$,
\begin{equation}\label{E Lxkth Lxth Lxkt Lxt}
    \begin{split}
        &\E[\tilde{L}(x+y,t+h)-\tilde{L}(x,t+h)-\tilde{L}(x+y,t)+\tilde{L}(x,t)]^m=\frac{1}{(2\pi)^{md}} \\
         & \times\int_{(\R^d)^m}\int_{[t,t+h]^m}\prod_{j=1}^{m}\left(e^{-i\left<u_j,x+y\right>}-e^{-i\left<u_j,x\right>}\right)\E\left[e^{i\sum_{j=1}^{m}\left<u_j,X_{t_j}\right>}\right]\prod_{j=1}^{m}dt_j\prod_{j=1}^{m}du_j\\
         &=\frac{1}{(2\pi)^{md}}\int_{(\R^d)^m}\int_{[t,t+h]^m}\prod_{j=1}^{m}\left(e^{-i\left<v_j-v_{j+1},x+y\right>}-e^{-i\left<v_j-v_{j+1},x\right>}\right)\\
         &\qquad\qquad\qquad\qquad\qquad\qquad\times\E\left[e^{i\sum_{j=1}^{m}\left<v_j,X_{t_j}-X_{t_{j-1}}\right>}\right]\prod_{j=1}^{m}dt_j\prod_{j=1}^{m}dv_j,
    \end{split}
\end{equation}
and
\begin{equation}\label{E Lxth Lxt}
    \begin{split}
        &\E[\tilde{L}(x,t+h)-\tilde{L}(x,t)]^m\\
        &=\frac{1}{(2\pi)^{md}} \int_{(\R^d)^m}\int_{[t,t+h]^m}e^{-i\sum_{j=1}^{m}\left<u_j,x\right>}\,\E\left[e^{i\sum_{j=1}^{m}\left<u_j,X_{t_j}\right>}\right]\prod_{j=1}^{m}dt_j\prod_{j=1}^{m}du_j\\
        &=\frac{1}{(2\pi)^{md}} \int_{(\R^d)^m}\int_{[t,t+h]^m}e^{-i\left<v_1,x\right>}\,\E\left[e^{i\sum_{j=1}^{m}\left<v_j,X_{t_j}-X_{t_{j-1}}\right>}\right]\prod_{j=1}^{m}dt_j\prod_{j=1}^{m}dv_j,
    \end{split}
\end{equation}
where $t_0=0$ and the last equality in \eqref{E Lxkth Lxth Lxkt Lxt} (resp. \eqref{E Lxth Lxt}) holds by the following changes of variables:
$$u_j=v_j-v_{j+1},\qquad j=1,\cdots,m,\qquad\text{with}\qquad v_{m+1}=0.$$

For the purpose to estimate \eqref{E Lxkth Lxth Lxkt Lxt} and \eqref{E Lxth Lxt}, we need first to estimate the characteristic function $\E\left[e^{i\sum_{j=1}^{m}\left<v_j,X_{t_j}-X_{t_{j-1}}\right>}\right]$. Therefore, we introduce the
following new condition, called $\alpha$-local nondeterminism ($\alpha$-LND for short), which involves local estimation of the characteristic function of the
increments:
\begin{defn}\label{alphha LND}
  Let $X=(X_t)_{t\in [0,T]}$ be a stochastic process with values in $\R^d$ and $J$ a subinterval of $[0,T]$. $X$ is said to be $\alpha$-LND on $J$, if for every non-negative integers $m\geq 2$, and $k_{j,l}$, for $j=1,\cdots,m$, $l=1,\cdots,d$, there exist positive constants $c$ and $\varepsilon$, both may depend on $m$ and $k_{j,l}$, such that
  \begin{equation}\label{Ineq alphha LND}
        \left|\E\left[e^{i\sum_{j=1}^{m}\left<v_j,X_{t_j}-X_{t_{j-1}}\right>}\right]\right|
         \leq\frac{c}{\prod_{j=1}^{m}\prod_{l=1}^{d}|v_{j,l}|^{k_{j,l}}(t_j-t_{j-1})^{\alpha k_{j,l}}},
  \end{equation}
  for all $v_j=(v_{j, l}\,;\;1\leq l\leq d)\in (\R\setminus\{0\})^{d}$, for $j=1,\cdots,m$, and for every ordered points $t_1<\cdots<t_m$ in $J$ with $t_m-t_1<\varepsilon$ and $t_0=0$.
\end{defn}
\begin{rem}

\begin{enumerate}

\item  It is well-known that the local nondeterminism concept in the Gaussian context means that ``the value of the process at a given time
point is relatively unpredictable on the basis of a finite set of observations from the immediate past". In the Gaussian framework, Berman uses the conditional variance to express this. But unfortunately, he can't use the conditional variance beyond the Gaussian case because in a general framework  the conditional variance is not deterministic. So, Berman has introduced the concept of local $g$-nondeterminism for general processes by replacing the incremental variance, which is a measure of local unpredictability, by a measure of local
predictability, namely, the value of the incremental density function at the origin, see \cite[Definition 5.1]{Bermangeneral1983}.  Hence, the local $g$-nondeterminism concept reflects well his name. By the Fourier inversion theorem, it is easy to see that the condition in Definition \ref{alphha LND} implies  the local $g$-nondeterminism condition. On the other hand, Nolan has introduced the notion of \textit{characteristic function locally approximately independent increments} (see \cite[Definition 3.1]{nolan1989local}), which is equivalent in the Gaussian and stable framework to the classical LND condition. The condition in Definition \ref{alphha LND} ($d=1$) is an extension of Nolan's notion by replacing the characteristic functions $\left|\E\left[e^{ic_m u_{j}(X(t_j)-X(t_{j-1}))}\right]\right|$ in the right-hand side of \cite[Ineq. (3.3)]{nolan1989local} by $c\,|u_{j}|^{-k_{j}}(t_j-t_{j-1})^{-\alpha k_{j}}$. For all these reasons, we choose to call the condition in Definition \ref{alphha LND} by $\alpha$-local nondeterminism ($\alpha$-LND).

  \item  Let $d=1$ and $Y=(Y_t)_{t\in [0,T]}$ be a centred Gaussian process that satisfies the classical local nondeterminism (LND) property on $J$. By \cite[Lemma 2.3]{Berman73} we have for any $m\geq2$, there exist two positive constants $c_m$ and $\varepsilon$ such that for every ordered points $t_1<\cdots<t_m$ in $J$ with $t_m-t_1<\varepsilon$, and $(v_1,\cdots,v_m)\in \R^m\setminus\{0\}$,
  \begin{equation}\label{LND}
    \var\left(\sum_{j=1}^{m}v_j(Y_{t_j}-Y_{t_{j-1}})\right)\geq c_m \sum_{j=1}^{m}v^2_j\var\left(Y_{t_j}-Y_{t_{j-1}}\right).
  \end{equation}
  Assume also that there exists a positive constant $K$, such that for every $s,t\in J$ with $s<t$,

  \begin{equation}\label{minoration var}
    K(t-s)^{2\alpha}\leq\var\left(Y_{t}-Y_{s}\right).
  \end{equation}
  Hence $Y$ is $\alpha$-LND on $J$.
  \item Let $d>1$ and $Y^0=(Y^0_t)_{t\in [0,T]}$ be a real-valued centred Gaussian process that verifies the classical local nondeterminism (LND) property on $J$ (i.e. \eqref{LND}) and \eqref{minoration var}. Define $Y_t=(Y^1_t,\cdots,Y^d_t),$
  where $Y^1,\cdots,Y^d$ are independent copies of $Y^0$. Then $Y$ is $\alpha$-LND on $J$.
\end{enumerate}
\end{rem}

To end this section, we give the following theorem that explain clearly the fact that if a function's local time, $L(x,t)$, is  H\"{o}lder continuous in $t$ uniformly in $x$, then this has a major effect on the H\"{o}lder continuity of the function itself.
\begin{thm}[Theorem 8.7.1 in \cite{Adler}]\label{non Holder}
  Let $(\theta_s)_{s\in [0,T]}$ be a continuous function with values in $\R^d$, possessing a local time, $L(x,t)$, satisfying: there exist positive constants $M$ and $\delta$, such that
  $$\sup_{x\in \R^d}|L(x,t)-L(x,s)|\leq M |t-s|^{\beta},$$
  for every $s,t\in [0,T]$ with $|t-s|<\delta$. Then all coordinate functions of $\theta$ are nowhere H\"{o}lder continuous of order greater than $(1-\beta)/d$.
\end{thm}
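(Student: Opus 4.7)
The plan is to prove the theorem by contrapositive: assume that at some point $t_0 \in [0,T]$ the vector function $\theta$ is $\gamma$-H\"older continuous with $\gamma > (1-\beta)/d$, and derive a contradiction by comparing how much Lebesgue measure the image of a small time-window occupies with how much time the trajectory actually spends there. I read the statement as follows: saying that \emph{every} coordinate of $\theta$ is $\gamma$-H\"older at $t_0$ is the same as saying $\|\theta(t)-\theta(t_0)\| \leq C|t-t_0|^{\gamma}$ for $t$ close to $t_0$ (the two sides differ only by a factor depending on $d$). So ruling out all coordinates being jointly H\"older of order greater than $(1-\beta)/d$ at any point is exactly ruling out $\theta$ itself being H\"older of that order at any point.

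First, I would fix $t_0$, a putative H\"older exponent $\gamma > (1-\beta)/d$, a constant $C$, and some $h \in (0,\delta/2)$ small enough that $\|\theta(s) - \theta(t_0)\| \leq C|s-t_0|^{\gamma}$ for all $s \in I_h := [t_0-h,t_0+h]\cap[0,T]$. The image $\theta(I_h)$ is then contained in the closed Euclidean ball $\bar B(\theta(t_0), Ch^{\gamma}) \subset \R^d$, whose Lebesgue measure is $\omega_d C^d h^{\gamma d}$, where $\omega_d$ is the volume of the unit ball.

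Second, I would apply the occupation density formula to $f = \mathbf 1_{\bar B(\theta(t_0), Ch^{\gamma})}$ and $B = I_h$. Because $\theta(s) \in \bar B(\theta(t_0), Ch^{\gamma})$ for every $s \in I_h$, the left-hand side equals $\lambda(I_h)$, which is of order $h$. The right-hand side is
\begin{equation*}
\int_{\bar B(\theta(t_0), Ch^{\gamma})} L(x,I_h)\, dx \;\leq\; \omega_d C^d h^{\gamma d}\, \sup_{x\in \R^d} L(x, I_h).
\end{equation*}
Since $|I_h| \leq 2h < \delta$, the hypothesis on the temporal regularity of the local time gives $\sup_{x \in \R^d} L(x,I_h) \leq M(2h)^{\beta}$. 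Combining these inequalities yields $h \lesssim h^{\gamma d + \beta}$ for all small $h>0$, which forces $\gamma d + \beta \leq 1$, contradicting $\gamma > (1-\beta)/d$.

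The argument is short and the only genuine obstacle is calibrating the three pieces correctly: the spatial size of the trajectory on a window of length $h$ (a $d$-dimensional volume of order $h^{\gamma d}$, which is why the dimension $d$ appears in the conclusion), the temporal regularity of the local time (factor $h^{\beta}$), and the total elapsed time (factor $h$). A minor point to handle with care is the boundary case $t_0 \in \{0,T\}$, which costs only a factor of two in the estimate of $\lambda(I_h)$ and does not affect the exponents. No further technology is required: the occupation formula recalled in Section \ref{The local times} together with the hypothesized H\"older bound on $L(x,t)$ are enough.
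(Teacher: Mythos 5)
Your occupation-formula argument is correct and is, up to cosmetic details, the standard proof of this result: contain $\theta(I_h)$ in a ball of radius $Ch^{\gamma}$, apply the occupation formula with that ball's indicator so that $\lambda(I_h)=\int_{\bar B}L(x,I_h)\,dx$, bound the integral by $\mathrm{vol}(\bar B)\cdot\sup_x L(x,I_h)\leq \omega_d C^d h^{\gamma d}\cdot M(2h)^{\beta}$, and let $h\to 0$ to force $\gamma d+\beta\leq 1$. The boundary case $t_0\in\{0,T\}$ is harmless, as you note.

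One point deserves to be made explicit, since you noticed the issue yourself and silently chose an interpretation. What your argument proves is that the \emph{vector-valued} function $\theta$ is nowhere $\gamma$-H\"older for $\gamma>(1-\beta)/d$; equivalently, at no $t_0$ are all $d$ coordinates simultaneously $\gamma$-H\"older. Read literally, the phrase ``all coordinate functions of $\theta$ are nowhere H\"older continuous of order greater than $(1-\beta)/d$'' asserts something strictly stronger, namely that each individual coordinate $\theta_k$ is, on its own, nowhere H\"older of that order. Your interpretation is the right one, and the coordinate-wise reading is in fact \emph{false} in general. For instance, take $d=2$ and $\theta=(B^{H_1},B^{H_2})$ with independent fractional Brownian motions of Hurst indices $H_1=0.7$, $H_2=0.2$. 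The joint local time exists (since $H_1+H_2<1$) and is uniformly H\"older in $t$ of any order $\beta<1-H_1-H_2=0.1$, so $(1-\beta)/d$ can be taken as close to $0.45$ as one likes. But $\theta_1=B^{0.7}$ is at every point H\"older of every order strictly below $0.7$, in particular of order $0.5>0.45$, so the coordinate-wise claim fails. What is true is exactly the vector statement you prove: no $t_0$ has both coordinates simultaneously $\gamma$-H\"older for $\gamma>0.45$, because the rough coordinate $\theta_2$ already fails. So your proof is complete and correct for the statement that the cited theorem actually establishes; the paper's phrasing (in both this theorem and Corollary~\ref{cor nonHolder}) is slightly looser than what the occupation-formula argument can deliver in the general abstract setting.
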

\subsection{Classical Malliavin calculus}\label{Malliavin calculus}
In this section, we introduce some elements of Malliavin calculus, for more details we can refer to Nualart \cite{Nualart} (see also Sanz-Sol\'e \cite{Sanz-Sole}).
Let $(W^i(t,x),\;t\in [0,T],\;x\in [0,1])$, $i=1,\cdots,d$, be $d$-independent space-time white noises defined on a probability space $(\Omega,\mathcal{F},\mathbb{P})$, and set $W=(W^1,\cdots,W^d)$. For any $h=(h^1,\cdots,h^d)\in \mathcal{H}:=L^2([0,T]\times[0,1],\R^d)$, we put the Wiener integral $W(h)=\sum_{i=1}^{d}\int_{0}^{T}\int_{0}^{1}h^i(t,x)W^i(dx,dt).$
Denote by $\mathcal{S}$ the class of cylindrical random variables of the form $F=\varphi(W(h_1),\cdots,W(h_n))$, with arbitrary $n\geq 1$, $h_1,\cdots,h_n$ in $\mathcal{H}$, and $\varphi\in C^{\infty}_P(\R^n)$, where $C^{\infty}_P(\R^n)$ is the set of real-valued functions $\varphi$ such that $\varphi$ and all its partial derivatives have at most polynomial growth. Let $F\in \mathcal{S}$, we define the derivative of $F$ as the $d$-dimensional stochastic process $DF=\{D_{t,x}F=(D^{(1)}_{t,x}F,\cdots,D^{(d)}_{t,x}F),\;(t,x)\in [0,T]\times[0,1]\},$
where, for $l=1,\cdots,d$,
$$D^{(l)}_{t,x}F=\sum_{i=1}^{n}\frac{\partial \varphi}{\partial x_i}(W(h_1),\cdots,W(h_n))h_i^l(t,x).$$
More generally set $D_{\alpha}^kF=D_{\alpha_1}\cdots D_{\alpha_k}F$ for the derivative of order $k$ of $F$, where $\alpha=(\alpha_1,\cdots,\alpha_k),$ $\alpha_i=(t_i,x_i)\in [0,T]\times[0,1]$, $k$ an integer. For any $p,k\geq 1$, we denote by $\mathbb{D}^{k,p}$ the closure of the class $\mathcal{S}$ with respect to the norm $\|\cdot\|_{k,p}$ defined by
$$\|F\|_{k,p}=\left\{\E[|F|^p]+\sum_{j=1}^{k}\E[\|D^jF\|_{\HH^{\otimes j}}^p]\right\}^{\frac{1}{p}},$$
where
$$\|D^jF\|_{\HH^{\otimes j}}=\left\{\sum_{i_1,\cdots,i_j=1}^{d}\int_{0}^{T}dt_1\int_{0}^{1}dx_1\cdot\cdot\cdot\int_{0}^{T}dt_j\int_{0}^{1}dx_j\left(D_{t_1,x_1}^{(i_1)}\cdot\cdot\cdot D_{t_j,x_j}^{(i_j)}F\right)^2\right\}^{\frac{1}{2}}.$$
We put $\mathbb{D}^{\infty}=\cap_{p\geq 1}\cap_{k\geq 1}\mathbb{D}^{k,p}.$ Let $0\leq s<t\leq T$, we set $\HH_{s,t}:=L^2([s,t]\times[0,1],\R^d).$
For any $F,G\in \mathbb{D}^{1,p}$ we point out that
$$\left<DF\,,\,DG\right>_{\HH}=\sum_{l=1}^{d}\int_{0}^{T}\int_{0}^{1}D^{(l)}_{r,x}F\, D^{(l)}_{r,x}G\,drdx,$$
and
$$ \left<DF\,,\,DG\right>_{\HH_{s,t}}=\sum_{l=1}^{d}\int_{s}^{t}\int_{0}^{1}D^{(l)}_{r,x}F\, D^{(l)}_{r,x}G\,drdx.$$
Let $\mathcal{V}$ be a separable Hilbert space. We define $\mathcal{S}_{\mathcal{V}}$ as the class of $\mathcal{V}$-valued smooth random variables of the form $u=\sum_{j=1}^{n}F_j\,v_j$, where $F_j\in \mathcal{S}$ and $v_j\in \mathcal{V}$. Similarly, we can introduce the analogous spaces $\mathbb{D}^{k,p}(\mathcal{V})$ and $\mathbb{D}^{\infty}(\mathcal{V})$, and the related norm $\|\cdot\|_{k,p,\mathcal{V}}$ defined by
$$\|u\|_{k,p,\mathcal{V}}=\left\{\E[\|u\|_{\mathcal{V}}^p]+\sum_{j=1}^{k}\E[\|D^ju\|_{\HH^{\otimes j}\otimes\mathcal{V}}^p]\right\}^{\frac{1}{p}}.$$

We denote by $\delta$ the Skorohod integral, which is defined as the adjoint of the operator $D$. $\delta$ is an unbounded operator on $L^2(\Omega,\mathcal{H})$ taking values in $L^2(\Omega)$. The domain of $\delta$, denoted by $\Dom(\delta)$, is the set of $u\in L^2(\Omega,\mathcal{H})$ such that there exists a constant $c>0$ satisfying $|\E[\left<DF,u\right>_{\HH}]|\leq c\|F\|_{0,2}$, for every $F\in \mathbb{D}^{1,2}$. Let $u\in \Dom(\delta)$, then $\delta(u)$ is the unique element of $L^2(\Omega)$ characterized by the duality formula
\begin{equation}\label{duality}
  \E[F\delta(u)]=\E\left[\sum_{l=1}^{d}\int_{0}^{T}\int_{0}^{1}D^{(l)}_{t,x}F\,u_l(t,x)\,dtdx\right],\quad \text{for all} \;F\in\mathbb{D}^{1,2}.
\end{equation}

We will use the following estimate for the norm $\|\cdot\|_{k,p}$ of the Skorohod integral.
\begin{prop}[\cite{Nualart}, Proposition 1.5.7]
  The divergence operator $\delta$ is continuous from $\mathbb{D}^{k+1,p}(\HH)$ to $\mathbb{D}^{k,p}$ for every $p>1,$ $k\geq 0$. Therefore, there exists a constant $c_{k,p}>0$ such that for any $u\in \mathbb{D}^{k+1,p}(\HH)$,
  \begin{equation}\label{continuite of the divergence operator}
    \|\delta(u)\|_{k,p}\leq c_{k,p} \|u\|_{k+1,p,\HH}.
  \end{equation}
\end{prop}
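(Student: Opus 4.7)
The plan is to follow the standard route via Meyer's inequalities combined with an induction on the order $k$, using the commutation between $D$ and $\delta$ as the mechanism that transfers a derivative on $\delta(u)$ to $\delta$ acting on a derivative of $u$.

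The base case $k=0$ asks for $\|\delta(u)\|_{0,p}\leq c_p\|u\|_{1,p,\HH}$ for every $p>1$, which is Meyer's inequality for the divergence. When $p=2$, this is a direct consequence of the duality formula \eqref{duality} together with the commutation rule $D\delta(u)=u+\delta(Du)$: the former yields $\E[\delta(u)^2]=\E[\left<u,D\delta(u)\right>_{\HH}]$, and substituting the latter and applying Cauchy--Schwarz controls $\E[\delta(u)^2]$ by $\E[\|u\|_{\HH}^2]$ plus a term involving $\E[\|Du\|_{\HH\otimes\HH}^2]$. For general $p>1$, the $L^p$-version rests on the boundedness on $L^p(\Omega)$ of the Riesz transforms attached to the Ornstein--Uhlenbeck semigroup, which is a genuine harmonic analysis input (proved via Littlewood--Paley theory or via hypercontractivity combined with a Calder\'on--Zygmund argument).

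For the inductive step, one exploits the commutation identity
$$D\delta(u)=u+\delta(Du),$$
viewed as an equality of $\HH$-valued random variables, where $Du$ is regarded as an $\HH\otimes\HH$-valued process. Unwinding the definition of the norm $\|\cdot\|_{k,p}$ gives the identity $\|\delta(u)\|_{k,p}^p=\|\delta(u)\|_{0,p}^p+\|D\delta(u)\|_{k-1,p,\HH}^p$, and applying the inductive hypothesis to the $\HH$-valued divergence acting on $Du\in\mathbb{D}^{k,p}(\HH\otimes\HH)$ yields
$$\|D\delta(u)\|_{k-1,p,\HH}\leq \|u\|_{k-1,p,\HH}+c_{k-1,p}\|Du\|_{k,p,\HH\otimes\HH}\leq c'_{k-1,p}\|u\|_{k+1,p,\HH},$$
from which $\|\delta(u)\|_{k,p}\leq c_{k,p}\|u\|_{k+1,p,\HH}$ follows, closing the induction.

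The main obstacle is the base case: unlike the relatively elementary $L^2$-estimate, the $L^p$-version of Meyer's inequality for $\delta$ is a genuinely nontrivial harmonic analysis statement and is not a formal consequence of the duality formula \eqref{duality}. Once this input is secured, the inductive step is essentially bookkeeping on the commutation identity, and the constants $c_{k,p}$ can be tracked explicitly by a Minkowski-type argument on the layered norms $\|\cdot\|_{k,p}$.
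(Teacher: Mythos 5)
The paper does not supply its own proof of this statement; it is cited directly from Nualart's book as Proposition 1.5.7, and your argument is precisely the standard proof given there: Meyer's inequality for the base case $k=0$, followed by induction on $k$ via the commutation identity $D\delta(u)=u+\delta(Du)$ and the layered decomposition $\|F\|_{k,p}^p=\|F\|_{0,p}^p+\|DF\|_{k-1,p,\HH}^p$. The one point worth making explicit is that the induction must be formulated for $V$-valued integrands over an arbitrary separable Hilbert space $V$ (i.e., $\delta:\mathbb{D}^{k+1,p}(\HH\otimes V)\to\mathbb{D}^{k,p}(V)$), since the inductive step applies the hypothesis to $\delta$ acting on $Du\in\mathbb{D}^{k,p}(\HH\otimes\HH)$ to produce an $\HH$-valued output; with that caveat the proposal is correct and coincides with the cited reference's approach.
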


\subsection{Stochastic heat equation}\label{Stochastic heat equation}
First note that Eq. \eqref{SHE} is formal, it can be formulated rigorously as follows (Walsh \cite{Walsh}): let $\mathcal{B}([0,1])$ be the Borel $\sigma$-algebra on $[0,1]$ and $W^l=(W^l(t,A),\;t\in [0,T],\; A\in \mathcal{B}([0,1]))$, where $l=1,\cdots,d$, be independent space-time white noises, defined on a complete probability space $(\Omega,\mathcal{F},\mathbb{P})$, i.e, $W^1,\cdots,W^d$ are independent and $W^l$ is a centred Gaussian process with covariance function given by
$$\E[W^l(t,A)W^l(s,B)]=(t\wedge s)\lambda(A\cap B),$$
for $1\leq l\leq d$, $t,s\in [0,T]$, and $A,B\in \mathcal{B}([0,1])$, where $\lambda$ is the Lebesgue measure. Set $W=(W^1,\cdots,W^d)$ and $W(t,x)=W(t,[0,x])$. For $t\in[0,T]$, let $\mathcal{F}_t=\sigma\{W(s,A),\;s\in [0,t],\;A\in \mathcal{B}([0,1])\}\vee \mathcal{N}$, where $\mathcal{N}$ is the collection of $\mathbb{P}$-null sets. We say that a process $u=\{u(t,x),\; t\in [0,T],\; x\in [0,1]\}$ is adapted to the filtration $(\mathcal{F}_t)_{0\leq t\leq T}$ if $u(t,x)$ is $\mathcal{F}_t$-measurable for each $(t,x)\in [0,T]\times [0,1]$. A mild solution of \eqref{SHE} is a jointly measurable $\R^d$-valued process $u=(u_1,\cdots,u_d)$ such that $u$ is adapted to $(\mathcal{F}_t)_{0\leq t\leq T}$ and for any $k\in \{1,\cdots,d\}$, $t\in [0,T]$, and $x\in [0,1]$,
\begin{equation}\label{solution SHE}
  \begin{split}
     u_k(t,x) &= \int_{0}^{t}\int_{0}^{1}G_{t-r}(x,v)\sum_{l=1}^{d}\sigma_{k,l}(u(r,v))W^l(dr,dv) \\
       &\qquad + \int_{0}^{t}\int_{0}^{1}G_{t-r}(x,v)b_k(u(r,v)) dv\, dr.
  \end{split}
\end{equation}
Here, The stochastic integral shall be read as in \cite{Walsh} and $G_t(x,y)$ denotes the Green kernel of the heat equation with Neumann boundary
conditions (see \cite[Chap. 3]{Walsh}). In this paper, We are not interested in the explicit form of $G_t(x,y)$, we will need just the following three properties:
\begin{itemize}
  \item The symmetry \cite[(3.6)]{Walsh}: $G_t(x,y)=G_t(y,x)$;
  \item The semi-group property \cite[(3.6)]{Walsh}: $\int_{0}^{1}G_t(x,y)G_s(y,z)dy=G_{t+s}(x,z)$;
  \item The Gaussian-type bound \cite[(A.1)]{BallyPardoux}:
  \begin{equation}\label{bound Green kernel}
    c_1\,\phi_{t-s}(x-y)\leq G_{t-s}(x,y)\leq c_2\,\phi_{t-s}(x-y),
  \end{equation}
  where $\phi_{t-s}(x-y)=\frac{1}{\sqrt{2\pi (t-s)}}\exp\left(-\frac{|x-y|^2}{2(t-s)}\right)$.
\end{itemize}

Modifying  the results from \cite{Walsh} to the case $d\geq 1$, one can show that (when $\sigma_{k,l}$ and $b_k$ are Lipschitz),  there exists a unique continuous $\R^d$-valued process $u=\{u(t,x),\; t\in [0,T],\; x\in [0,1]\}$ adapted to $(\mathcal{F}_t)_{0\leq t\leq T}$ which is the mild solution of \eqref{SHE}. Furthermore, it is shown in \cite{BallyMilletSanz} that for any  $0\leq s\leq t\leq T$, $x,y\in [0,1]$, and $p>1$,
\begin{equation}\label{kolmoggorov SHE}
  \E[|u(t,x)-u(s,y)|^p]\leq C\left[|t-s|^{1/2}+|x-y|\right]^{p/2}.
\end{equation}
Therefore, $u$ is $(\frac{1}{4}-\varepsilon)$-H\"{o}lder continuous in $t$ and $(\frac{1}{2}-\varepsilon)$-H\"{o}lder continuous in $x$.

Now we are concerned with the study of the Malliavin differentiability of $u$ and the equations fulfilled by its Malliavin derivatives (Proposition \ref{prop derivatives SHE}). We refer to Bally and Pardoux \cite[Proposition 4.3, (4.16), (4.17)]{BallyPardoux} for a complete proof in dimension one. In this paper, we work coordinate by coordinate, therefore the below proposition follows in the same way, and its proof is then omitted.
\begin{prop}[Proposition 4.1 \cite{DalangKhoshnevisanNualartmultiplicative}]\label{prop derivatives SHE}
  Assume \normalfont{\textbf{A1}}. Then for any $t\in [0,T]$ and $x\in [0,1]$ we have $u(t,x)\in (\mathbb{D}^{\infty})^d$. Furthermore, its iterated derivative satisfies for $n\geq1$ and all $r_1,\cdots,r_n\in[0,T]$ such that $r_1\vee \cdots\vee r_n<t$,
  \begin{equation*}
    \begin{split}
      & D^{(i_1)}_{r_1,v_1} \cdots D^{(i_n)}_{r_n,v_n}(u_k(t,x)) \\
       &= \sum_{p=1}^{d}G_{t-r_p}(x,v_p)\left(D^{(i_1)}_{r_1,v_1} \cdots D^{(i_{p-1})}_{r_{p-1},v_{p-1}}D^{(i_{p+1})}_{r_{p+1},v_{p+1}}\cdots D^{(i_n)}_{r_n,v_n}(\sigma_{k,l_p}(u(r_p,v_p)))\right)   \\
         &\qquad+\sum_{l=1}^{d}\int_{r_1\vee \cdots\vee r_n}^{t}\int_{0}^{1}G_{t-\tau}(x,z)\prod_{q=1}^{n}D^{(i_q)}_{r_q,v_q}(\sigma_{k,l}(u(\tau,z)))W^l(d\tau,dz)  \\
         & \qquad\qquad+\int_{r_1\vee \cdots\vee r_n}^{t}\int_{0}^{1}G_{t-\tau}(x,z)\prod_{q=1}^{n}D^{(i_q)}_{r_q,v_q}(b_k(u(\tau,z)))dz\,d\tau,
    \end{split}
  \end{equation*}
  and when $t \leq r_1\vee \cdots\vee r_n$ we have $D^{(i_1)}_{r_1,v_1} \cdots D^{(i_n)}_{r_n,v_n}(u_k(t,x))=0$. Finally, for any $p>1$,
  \begin{equation}\label{sup derivative SHE}
    \sup_{(t,x)\in [0,T]\times[0,1]}\E\left[\|D^n u_k(t,x)\|^p_{\mathcal{H}^{\otimes n}}\right]<\infty.
  \end{equation}
\end{prop}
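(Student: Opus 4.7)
The plan is a Picard iteration combined with the closability of the Malliavin derivative. Define $u^{(0)}\equiv 0$ and let $u^{(m+1)}_k(t,x)$ be obtained from \eqref{solution SHE} by replacing the $u(r,v)$ appearing inside $\sigma_{k,l}$ and $b_k$ by $u^{(m)}(r,v)$. Standard Walsh/Bally--Pardoux arguments yield $u^{(m)}\to u$ in $L^p(\Omega)$ uniformly in $(t,x)$ for every $p\ge 1$.

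First, I would show by induction on $m$ that each $u^{(m)}_k(t,x)\in \mathbb{D}^\infty$ and simultaneously derive the formula for $D^n u^{(m+1)}_k$. For the inductive step, assumption \textbf{A1} and the Malliavin chain rule give $\sigma_{k,l}(u^{(m)}(r,v)),\,b_k(u^{(m)}(r,v))\in\mathbb{D}^\infty$, and these inherit the required smoothness under stochastic and deterministic integration against the bounded kernel $G_{t-r}(x,v)$. Differentiating inside the integrals via the identity
\[
D^{(i)}_{r,v}\!\!\int_0^t\!\!\int_0^1 G_{t-\tau}(x,z) f(\tau,z) W^l(d\tau,dz) = G_{t-r}(x,v) f(r,v)\,\delta_{i,l}\,\mathbbm{1}_{[0,t]}(r) + \int_r^t\!\!\int_0^1 G_{t-\tau}(x,z) D^{(i)}_{r,v} f(\tau,z) W^l(d\tau,dz)
\]
for adapted $f$, the analogous identity for the Lebesgue integral, and the chain rule for $D^{(i)}_{r,v}[\sigma_{k,l}(u^{(m)}(\tau,z))]$, then iterating to produce the $n$-fold derivative, yields the stated recursion. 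The ``boundary term'' $G_{t-r_p}(x,v_p)$ is precisely the contribution of the first term above when the outermost derivative falls on the noise integrator.

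The main obstacle is the uniform $L^p$-bound
\[
M_n(t) := \sup_m\sup_{x\in [0,1]}\sup_{s\le t}\E\!\left[\|D^n u^{(m)}_k(s,x)\|^p_{\HH^{\otimes n}}\right]<\infty.
\]
Applying Burkholder--Davis--Gundy for Walsh integrals to the recursive formula for $D^n u^{(m+1)}_k$, using the Fa\`a di Bruno--type bound
\[
|D^n[\sigma_{k,l}(u(\tau,z))]|\le C\sum_{\pi}\prod_{A\in\pi}|D^{|A|}u(\tau,z)|
\]
(summed over partitions $\pi$ of $\{1,\ldots,n\}$), Minkowski's inequality, and the crucial kernel estimate $\int_0^1 G_{t-r}(x,v)^2\,dv\le C(t-r)^{-1/2}$ deduced from \eqref{bound Green kernel}, one obtains a singular Volterra inequality
\[
M_n(t)\le C_n + C\int_0^t (t-r)^{-1/2} M_n(r)\,dr + \text{(contributions from ranks $<n$)}.
\]
A singular-kernel Gronwall lemma and induction on $n$ then close the estimate; the technical heart is disentangling the iterated chain-rule combinatorics while respecting the non-integrable singularity of the squared Green kernel, with lower-rank terms absorbed by the induction hypothesis.

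Finally, the uniform bound together with the closability of $D^n$ allows passage to the limit $m\to\infty$: weak compactness in $L^p(\Omega,\HH^{\otimes n})$ extracts a subsequence of $\{D^n u^{(m)}_k(t,x)\}$ that converges weakly, and since $u^{(m)}\to u$ in $L^p$, the limit must equal $D^n u_k(t,x)$. Hence $u_k(t,x)\in\mathbb{D}^\infty$, and passing to the limit in the integral equation for $D^n u^{(m+1)}_k$ yields the stated formula for $D^n u_k(t,x)$. The vanishing when $t\le r_1\vee\cdots\vee r_n$ follows from adaptedness (the Malliavin derivative of an $\mathcal{F}_t$-measurable random variable is supported in $[0,t]\times[0,1]$), while \eqref{sup derivative SHE} follows from $M_n(T)<\infty$ by Fatou.
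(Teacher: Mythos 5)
The paper itself does not prove this proposition: it states that the result ``follows in the same way'' as Bally and Pardoux (Proposition 4.3, (4.16), (4.17)), working coordinate by coordinate, and omits the proof. Your sketch — Picard iteration, inductive verification that each iterate lies in $\mathbb{D}^\infty$ together with the explicit formula for $D^n u^{(m+1)}_k$, a uniform $L^p(\Omega;\HH^{\otimes n})$ bound obtained via Burkholder and a singular Gronwall lemma built on $\int_0^1 G_{t-r}(x,v)^2\,dv\le C(t-r)^{-1/2}$, and finally closability of $D^n$ to pass to the limit — is precisely the standard argument used in those references, so your proposal matches the intended proof.
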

\begin{rem}
Point out that, in particular, the first-order Malliavin derivative fulfils, for $r<t$,
\begin{equation}\label{first derivative SHE}
  D^{(i)}_{r,v}(u_k(t,x))= G_{t-r}(x,v)\sigma_{k,i}(u(r,v))+a_k(i,r,v,t,x),
\end{equation}
where
\begin{equation}\label{akirvtx}
    \begin{split}
       a_k(i,r,v,t,x) &= \sum_{l=1}^{d}\int_{r}^{t}\int_{0}^{1}G_{t-\tau}(x,z)D_{r,v}^{(i)}(\sigma_{k,l}(u(\tau,z)))W^l(d\tau,dz) \\
         &\qquad+ \int_{r}^{t}\int_{0}^{1}G_{t-\tau}(x,z)D_{r,v}^{(i)}(b_k(u(\tau,z)))dz\,d\tau,
    \end{split}
\end{equation}
and
\begin{equation}\label{derivative order 1 equal 0}
  D^{(i)}_{r,v}(u_k(t,x))=0\qquad\text{when }r>t.
\end{equation}
\end{rem}

We conclude this section by the following useful lemma due to Morien \cite[Lemma 4.2]{Morien} for $d=1$
\begin{lem}\label{Morien}
  Assume \normalfont{\textbf{A1}}. For all $q\geq 1$, $T>0$ there exists $C>0$ such that for all $0<\varepsilon\leq s \leq t\leq T$ and $ 0\leq y\leq 1$,
  $$\sum_{i=1}^{d}\E\left[\left(\int_{s-\varepsilon}^{s}dr\int_{0}^{1}dv\left|\sum_{k=1}^{d}\left|D^{(i)}_{r,v}(u_k(t,y))\right|\right|^2\right)^q\right]\leq C \varepsilon^{q/2}.$$
\end{lem}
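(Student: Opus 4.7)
The plan is to reduce the lemma to a deterministic bound on an $L^2$-integral of the Green kernel, by first establishing a pointwise moment bound on the Malliavin derivatives.

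First, using the elementary inequality $\bigl(\sum_{k=1}^{d}|a_k|\bigr)^2\leq d\sum_{k=1}^{d}|a_k|^2$ together with Minkowski's integral inequality, the claim reduces to showing a uniform pointwise bound
\begin{equation*}
    \E\bigl[|D^{(i)}_{r,v}(u_k(t,y))|^{2q}\bigr]^{1/q}\leq C\,G_{t-r}(y,v)^2,\qquad r<t, \tag{$\ast$}
\end{equation*}
for every $1\leq i,k\leq d$. Indeed, assuming $(\ast)$, the upper bound in \eqref{bound Green kernel} and the elementary calculation $\int_0^1\phi_{t-r}(y-v)^2\,dv\leq C(t-r)^{-1/2}$ yield
\begin{equation*}
    \int_{s-\varepsilon}^s\!\int_0^1 G_{t-r}(y,v)^2\,dv\,dr \leq C\int_{s-\varepsilon}^s\frac{dr}{\sqrt{t-r}} \leq C\int_{s-\varepsilon}^s\frac{dr}{\sqrt{s-r}} = 2C\sqrt{\varepsilon},
\end{equation*}
where we used $t\geq s$. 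Raising to the $q$-th power and summing the finitely many terms in $(i,k)$ produces the required $\varepsilon^{q/2}$.

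To establish $(\ast)$, I would start from the representation \eqref{first derivative SHE}: the leading term $G_{t-r}(y,v)\sigma_{k,i}(u(r,v))$ is controlled in $L^{2q}$ by $C\,G_{t-r}(y,v)$, since $\sigma_{k,i}$ is bounded (\textbf{A1}). For the remainder $a_k(i,r,v,t,y)$ in \eqref{akirvtx}, the chain rule combined with the boundedness of the partial derivatives of $\sigma_{k,l}$ and $b_k$ (\textbf{A1}) expresses the Malliavin derivatives inside the integrals as linear combinations of $D^{(i)}_{r,v}(u_{k'}(\tau,z))$ with uniformly bounded coefficients. Applying the Burkholder--Davis--Gundy inequality to the stochastic integral and H\"{o}lder's inequality to the drift integral, and setting $f(\tau,z):=\max_{k}\E[|D^{(i)}_{r,v}(u_k(\tau,z))|^{2q}]^{1/q}$, one arrives at a linear integral inequality of the form
\begin{equation*}
    f(t,y) \leq C\,G_{t-r}(y,v)^2 + C\int_{r}^{t}\!\int_0^1 G_{t-\tau}(y,z)^2\, f(\tau,z)\,dz\,d\tau.
\end{equation*}

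A Gronwall-type iteration then closes this inequality and produces $f(t,y)\leq C'\,G_{t-r}(y,v)^2$, which is $(\ast)$. The Picard iteration converges because $\int_0^T\!\int_0^1 G_{\tau}(y,z)^2\,dz\,d\tau\leq C\sqrt{T}<\infty$, so successive convolutions of squared Gaussian kernels remain bounded, via the semigroup property of $G$, by a constant multiple of $G_{t-r}^2$ uniformly in $(r,v,t,y)$. The main technical obstacle is precisely this Gronwall iteration, i.e.\ controlling the convolutions of $G^2$ against themselves and verifying the Neumann series is dominated by $G_{t-r}(y,v)^2$; this is exactly the calculation Morien carries out in the one-dimensional case \cite{Morien}, and the extension to $d\geq 1$ here introduces only additional finite summations over the coordinate indices $k'$ and noise indices $l$, which do not affect the structure since all coefficients are uniformly bounded by \textbf{A1}.
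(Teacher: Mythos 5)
Your reduction is correct and matches the approach of the cited reference \cite[Lemma 4.1--4.2]{Morien}: reduce the statement to a uniform pointwise bound on $\E[|D^{(i)}_{r,v}u_k(t,y)|^{2q}]$ via Minkowski's integral inequality, then integrate the square of the Green kernel. Since the paper itself provides no proof and simply cites Morien for $d=1$, your proposal fills in exactly the route the paper intends. The reduction step is clean: the inequality $\bigl(\sum_k|a_k|\bigr)^2\leq d\sum_k|a_k|^2$, Minkowski, then $\int_{s-\varepsilon}^s\int_0^1 G_{t-r}(y,v)^2\,dv\,dr\leq C\int_{s-\varepsilon}^s(t-r)^{-1/2}\,dr\leq C\int_{s-\varepsilon}^s(s-r)^{-1/2}\,dr=2C\sqrt{\varepsilon}$ using $t\geq s$, and the extension from $d=1$ to general $d$ genuinely introduces only finite coordinate sums since $\sigma,b$ and their derivatives are bounded by \textbf{A1}.

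One remark on the part you leave implicit. The justification of the Gronwall/Picard step is a bit loose as phrased: the fact that $\int_0^T\!\int_0^1 G_\tau^2\,dz\,d\tau<\infty$ by itself does not force the Neumann series to be dominated by $G_{t-r}^2$ with a constant uniform in $(r,v,t,y)$. What is actually needed is the sharper kernel estimate, obtainable from \eqref{bound Green kernel} and the semigroup property of the underlying Gaussian kernel,
\begin{equation*}
\int_r^t\!\int_0^1 G_{t-\tau}(y,z)^2\, G_{\tau-r}(z,v)^2\,dz\,d\tau \;\leq\; C\,\sqrt{t-r}\;G_{t-r}(y,v)^2,
\end{equation*}
which shows that each further convolution with the squared kernel costs a factor of order $\sqrt{T}$ but gains no new spatial singularity; one then closes the iteration by a factorial decay in the time simplex exactly as in \eqref{Ehm equality} with exponents $b_j=\tfrac12$. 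With this clarification, your $(\ast)$ holds with a constant uniform in $(r,v,t,y)$ and the argument is complete; this is indeed the content of the cited Morien Lemma 4.1, so your proof attempt is sound.
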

\section{Some new Malliavin calculus tools}\label{Malliavin calculus for adapted processes}
The main application of Malliavin calculus is the study of existence and
smoothness of densities for the probability laws. For the purposes of the proof
of our results, we introduce some new tools of Malliavin calculus that are
interesting in the frame of general adapted processes. Let us first state the
following definitions

\begin{defn}\label{defnotation}
 For a fixed integer $n\geq 1$,  let  $F=(F_1,\cdots,F_n)$ be an $\R^{n\times d}$-valued random vector such that for $i=1,\cdots,n$, $F_i=(F_{i,1},\cdots,F_{i,d})$ where $F_{i,k}\in \mathbb{D}^{1,p}$ for any $1\leq i\leq n$ and $1\leq k\leq d$. Let $\pi_n=(t_1,\cdots,t_n)$ with $0=t_0<t_1<\cdots<t_n\leq T$, we define the following matrices, for every $1\leq i,j\leq n$,
  $$\Gamma^{i,j}=\left(\Gamma^{i,j}_{k,l}\right)_{1\leq k,l\leq d}\quad \text{where}\quad \Gamma^{i,j}_{k,l}=\left<DF_{i,k}\,,\,DF_{j,l}\right>_{\HH_{t_{i-1},t_i}},$$
   here $\HH_{t_{i-1},t_i}=L^2([t_{i-1},t_i]\times[0,1],\R^d)$. We write $\Gamma_{F\,,\,\pi_n}$ for the $\pi_n$-Malliavin matrix of $F$. That is, the following block matrix
  $$\Gamma_{F\,,\,\pi_n}=\left(\Gamma^{i,j}\right)_{1\leq i,j\leq n}.$$
\end{defn}
\begin{rem}
For an $\R^d$-valued adapted process $X=(X(t))_{t\in [0,T]}$ which is smooth in the Malliavin sense, we have the following known property:
$$D^{(i)}_rX_k(t)=0\qquad\text{ for any }r>t.$$
Hence, the $\pi_n$-Malliavin matrix of the vector of the increments of $X$ is a triangular block matrix. So, the calculus of its determinant is obvious, which explains the practical usefulness of considering the $\pi_n$-Malliavin matrices for adapted processes.
\end{rem}
\begin{defn}
  With the same notations as in Definition \ref{defnotation},
  $F$ is said to be $\pi_n$-nondegenerate, if it satisfies the following three conditions:
  \begin{description}
    \item[(i)] For all $i=1,\cdots,n,$ and $k=1,\cdots,d,$ $F_{i,k}\in \mathbb{D}^{\infty}$.
    \item[(ii)] $\Gamma_{F\,,\,\pi_n}$ is invertible a.s. and we denote by $\Gamma^{-1}_{F\,,\,\pi_n}$ its inverse.
    \item[(iii)] $\left(\det{\Gamma_{F\,,\,\pi_n}}\right)^{-1}\in L^p$ for all $p\geq 1$.
  \end{description}
\end{defn}

The following integration by parts formula plays a crucial role in our paper.

Let $m\geq 1$. For any multi-index $\beta=(\beta_{1},\cdots,\beta_{m})$ with $\beta_{\theta}=(i_{\theta},k_{\theta})\in \{1,\cdots,n\}\times\{1,\cdots,d\}$, for $\theta=1,\cdots,m$, we introduce the following notations:
  $$\partial_{\beta_{\theta}}:=\frac{\partial}{\partial x_{i_{\theta},k_{\theta}}}\quad \text{for}\quad \theta=1,\cdots,m ,\quad\text{and}\quad \partial_{\beta}:=\partial_{\beta_1}\cdots\partial_{\beta_m}.$$
\begin{prop}\label{prop integration by parts}
  For a fixed integer $n\geq 1$, let $\pi_n=(t_1,\cdots,t_n)$ with $0=t_0<t_1<\cdots<t_n\leq T$ and  $F=(F_1,\cdots,F_n)$ be a ${\pi_n}$-nondegenerate
   random vector with values in $\R^{n\times d}$ such that $F_i=(F_{i,1},\cdots,F_{i,d})$, for $i=1,\cdots,n$. Let $G\in \mathbb{D}^{\infty}$ and let $g(x)\in C^{\infty}_{P}(\R^{n\times d})$, where $x=(x_{i,k}\,,\;1\leq i\leq n\,,\; 1\leq k\leq d)$.
  Then for all $m\geq 1$ and any multi-index $\beta=(\beta_{1},\cdots,\beta_{m})$, there exists $H^{\beta}_{\pi_n}(F\,,\,G)\in \mathbb{D}^{\infty}$ such that
  \begin{equation}\label{integration by parts}
    \E\left[(\partial_{\beta}g)(F)G\right]=\E\left[g(F) H^{\beta}_{\pi_n}(F\,,\,G)\right],
  \end{equation}
  where the random variables $H^{\beta}_{\pi_n}(F, G)$ are recursively given by
  \begin{equation}\label{H i k}
    H^{(i,k)}_{\pi_n}(F\,,\,G)=\sum_{j=1}^{n}\sum_{l=1}^{d}\delta\left(G\left(\Gamma^{-1}_{F\,,\,\pi_n}\right)^{i,j}_{k,l}DF_{j,l}^{}\mathbbm{1}_{[t_{j-1}\,,\,t_j]\times[0\,,\,1]}\right),
  \end{equation}
  \begin{equation}\label{H alpha}
    H^{\beta}_{\pi_n}(F\,,\,G)=H^{\beta_m}_{\pi_n}\left(F\,,\,H^{(\beta_1,\cdots,\beta_{m-1})}_{\pi_n}(F\,,\,G)\right).
  \end{equation}
  \end{prop}
  \begin{proof}
    By the chain rule \cite[Proposition 1.2.3]{Nualart} we have
    $$Dg(F)=\sum_{j=1}^{n}\sum_{l=1}^{d}\frac{\partial g}{\partial x_{j,l}}(F)DF_{j,l}.$$
    Therefore, for every $i=1,\cdots,n,$ and $k=1,\cdots,d,$
    \begin{align*}
      \left<DF_{i,k}\,,\,Dg(F)\right>_{\HH_{t_{i-1},t_i}} &= \sum_{j=1}^{n}\sum_{l=1}^{d}\frac{\partial g}{\partial x_{j,l}}(F)\left<DF_{i,k}\,,\,DF_{j,l}\right>_{\HH_{t_{i-1},t_i}}\\
       &= \sum_{j=1}^{n}\sum_{l=1}^{d}\frac{\partial g}{\partial x_{j,l}}(F)\,\Gamma^{i,j}_{k,l}.
    \end{align*}
    Hence, for any $i=1,\cdots,n,$ and $k=1,\cdots,d,$
    $$\frac{\partial g}{\partial x_{i,k}}(F)= \sum_{j=1}^{n}\sum_{l=1}^{d}\left<DF_{j,l}\,,\,Dg(F)\right>_{\HH_{t_{j-1},t_j}}\left(\Gamma^{-1}_{F\,,\,\pi_n}\right)^{i,j}_{k,l}.$$
    And, consequently, we obtain
    \begin{equation}\label{G partial}
      G\frac{\partial g}{\partial x_{i,k}}(F)= \sum_{j=1}^{n}\sum_{l=1}^{d}G\left<DF_{j,l}\,,\,Dg(F)\right>_{\HH_{t_{j-1},t_j}}\left(\Gamma^{-1}_{F\,,\,\pi_n}\right)^{i,j}_{k,l}.
    \end{equation}
    On the other hand, we have for every $j=1,\cdots,n,$ and $l=1,\cdots,d,$
  \begin{equation}\label{Ht to H}
    \left<DF_{j,l}\,,\,Dg(F)\right>_{\HH_{t_{j-1},t_j}}=\left<DF_{j,l}\,\mathbbm{1}_{[t_{j-1}\,,\,t_j]\times[0\,,\,1]}\,,\,Dg(F)\right>_{\HH}.
  \end{equation}
    Finally, taking expectations in \eqref{G partial}  and using \eqref{Ht to H} and the duality formula \eqref{duality} we get
    $$\E[\partial_{(i,k)} g(F)G]=\E[g(F)H^{(i,k)}_{\pi_n}(F\,,\,G)],$$
    where $H^{(i,k)}_{\pi_n}(F\,,\,G)$ is given by \eqref{H i k}. The equation \eqref{H alpha} follows by recurrence.
  \qed\end{proof}
  The below lemma will be devoted to get some estimations of the
$\|\cdot\|_{k,p}$-norm of elements of the Malliavin matrix
  \begin{lem}\label{inner product leq norms}
    Let $0\leq s<t\leq T$ and $F,G\in \mathbb{D}^{\infty}$, then we have the following
    \begin{equation}\label{norm DF DG }
     \|\left<DF\,,\,DG\right>_{\HH_{s,t}}\|_{k,p}\leq C\|DF\|_{k,2p,\HH} \,\|DG\|_{k,2p,\HH}.
    \end{equation}
  \end{lem}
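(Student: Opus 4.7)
My plan is to expand the $\|\cdot\|_{k,p}$-norm according to its definition and estimate each summand separately, using a Leibniz-type identity for the Malliavin derivative of an inner product together with Cauchy--Schwarz applied at both the pathwise (tensor) level and the $L^{p}(\Omega)$ level. Concretely, I write
$$\|\langle DF, DG\rangle_{\HH_{s,t}}\|_{k,p}^{p}=\E\bigl[|\langle DF,DG\rangle_{\HH_{s,t}}|^{p}\bigr]+\sum_{j=1}^{k}\E\bigl[\|D^{j}\langle DF,DG\rangle_{\HH_{s,t}}\|_{\HH^{\otimes j}}^{p}\bigr]$$
and bound each term in turn.

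First I would dispose of the $j=0$ term. Applying Cauchy--Schwarz inside $\HH_{s,t}$ and using $\|\cdot\|_{\HH_{s,t}}\leq\|\cdot\|_{\HH}$ gives the pathwise bound $|\langle DF,DG\rangle_{\HH_{s,t}}|\leq\|DF\|_{\HH}\|DG\|_{\HH}$; one further Cauchy--Schwarz in $L^{p}(\Omega)$ then splits this into $(\E[\|DF\|_{\HH}^{2p}])^{1/2}(\E[\|DG\|_{\HH}^{2p}])^{1/2}$, each factor being controlled by the corresponding $\|\cdot\|_{k,2p,\HH}$-norm.

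For $1\leq j\leq k$ the main ingredient is the chain-rule identity
$$D_{\alpha}\langle DF,DG\rangle_{\HH_{s,t}}=\langle D_{\alpha}DF,DG\rangle_{\HH_{s,t}}+\langle DF,D_{\alpha}DG\rangle_{\HH_{s,t}},$$
which iterates to the binomial-type expansion
$$D^{j}\langle DF,DG\rangle_{\HH_{s,t}}=\sum_{i=0}^{j}\binom{j}{i}\langle D^{i+1}F, D^{j-i+1}G\rangle_{\HH_{s,t}},$$
in which each summand is to be read as a partial contraction: one $\HH$-slot of $D^{i+1}F$ is paired with one $\HH$-slot of $D^{j-i+1}G$ via the $\HH_{s,t}$ inner product, the remaining $i+(j-i)=j$ tensor factors assembling into an element of $\HH^{\otimes j}$. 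A pointwise Cauchy--Schwarz applied in the contracted slot yields
$$\|\langle D^{i+1}F, D^{j-i+1}G\rangle_{\HH_{s,t}}\|_{\HH^{\otimes j}}\leq\|D^{i+1}F\|_{\HH^{\otimes(i+1)}}\|D^{j-i+1}G\|_{\HH^{\otimes(j-i+1)}}.$$

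Finally, raising to the $p$-th power, integrating, and applying Cauchy--Schwarz in $L^{2p}(\Omega)$ to separate the $F$ and $G$ factors, the full sum is dominated by a constant multiple of $\|DF\|_{k,2p,\HH}^{p}\|DG\|_{k,2p,\HH}^{p}$, because for every $0\leq i\leq k$ the quantity $\|D^{i+1}F\|_{\HH^{\otimes(i+1)}}$ appears as one of the summands in $\|DF\|_{k,2p,\HH}$ (and similarly for $G$). I expect the only mildly delicate point to be making the tensor-contraction bookkeeping rigorous in the iterated Leibniz rule and justifying the pointwise Cauchy--Schwarz in the contracted slot; once that is in place the remaining estimates are routine consequences of the inequality $\|\cdot\|_{\HH_{s,t}}\leq\|\cdot\|_{\HH}$ and Cauchy--Schwarz in $L^{p}(\Omega)$.
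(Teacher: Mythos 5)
Your proof is correct and follows essentially the same route as the paper's: expand the $\|\cdot\|_{k,p}$-norm, apply a Leibniz rule to the iterated Malliavin derivative of the $\HH_{s,t}$-inner product, bound each partial contraction by Cauchy--Schwarz in the contracted slot together with $\|\cdot\|_{\HH_{s,t}}\leq\|\cdot\|_{\HH}$, then apply Cauchy--Schwarz in $L^{2p}(\Omega)$ to separate the $F$- and $G$-factors. The paper carries out the identical plan with fully written-out coordinate integrals over $[0,T]\times[0,1]$ and a sum over subsets $I\subset\{i_1,\dots,i_j\}$ in place of your symmetrized binomial expansion, but the underlying argument is the same.
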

  \begin{proof}
    By definition, we have
    \begin{equation}\label{norm DF DG}
      \begin{split}
         \|\left<DF\,,\,DG\right>_{\HH_{s,t}}\|_{k,p} =& \left\{\vphantom{\frac{1}{2}}\E[|\left<DF\,,\,DG\right>_{\HH_{s,t}}|^p]\right.\\
           &\left.+\sum_{j=1}^{k}\E[\|D^j\left<DF\,,\,DG\right>_{\HH_{s,t}}\|_{\HH^{\otimes j}}^p]\right\}^{\frac{1}{p}}.
      \end{split}
    \end{equation}
  We can easily check that
  \begin{equation}\label{inner product DF DG Hts leq}
    \E[|\left<DF\,,\,DG\right>_{\HH_{s,t}}|^p]\leq \|DF\|^p_{k,2p,\HH}\,\|DG\|^p_{k,2p,\HH}.
  \end{equation}
  On the other hand, we get for $j\geq 1$ and $p>2,$
  \begin{align}\label{inner product DjF DjG Hts leq}
    &E[\|D^j\left<DF\,,\,DG\right>_{\HH_{s,t}}\|_{\HH^{\otimes j}}^p] \nonumber\\
    &= E\left[\left\|D^j\left(\int_{s}^{t}\int_{0}^{1} D_{r,x}F\cdot D_{r,x}G\,drdx\right)\right\|_{\HH^{\otimes j}}^p\right]  \nonumber\\
    &=\E\left[\left\{\sum_{i_1,\cdots,i_j=1}^{d}\int_{0}^{T}dr_1\int_{0}^{1}dx_1\cdots\int_{0}^{T}dr_j\int_{0}^{1}dx_j\right.\right.\nonumber\\
    &\qquad\qquad\qquad\qquad\left.\left.\left|\sum_{l=1}^{d}\int_{s}^{t}\int_{0}^{1}D^{(i_1)}_{r_1,x_1}\cdots D^{(i_j)}_{r_j,x_j}(D^{(l)}_{r,x} F\cdot D^{(l)}_{r,x}G)\,drdx\right|^2\right\}^{\frac{p}{2}}\right].
    \end{align}
    Let $J=\{i_1,\cdots,i_j\}$ and $I\subset \{i_1,\cdots,i_j\}$ such that $I=\{i_{k_1},\cdots,i_{k_m}\}$, and $\alpha_I=(t_{k_1},x_{k_1},\cdots,t_{k_m},x_{k_m})$, we put $D^I_{\alpha_I}F:=D^{(i_{k_1})}_{t_{k_1},x_{k_1}}\cdots D^{(i_{k_m})}_{t_{k_m},x_{k_m}}F$. When $I=\varnothing$, we set $D^{I}F=F$. Then we have for $F,G\in \mathbb{D}^{\infty}$ and $\alpha=(t_1,x_1,\cdots,t_j,x_j)$, the following Leibniz's rule
    \begin{equation}\label{Leibnizs rule}
      D_{\alpha}^J(F\cdot G)=\sum_{I\subset \{i_1,\cdots,i_j\}}D^I_{\alpha_I}F\cdot D^{I^c}_{\alpha_{I^c}}G,
    \end{equation}
    where $I^c$ is the complement of $I$. We denote by $|I|$ the cardinality of $I$. Combining  \eqref{inner product DjF DjG Hts leq} and \eqref{Leibnizs rule}, we get
    \begin{align*}
    &E[\|D^j\left<DF\,,\,DG\right>_{\HH_{s,t}}\|_{\HH^{\otimes j}}^p] \nonumber\\
    &=\E\left[\left\{\sum_{i_1,\cdots,i_j=1}^{d}\int_{0}^{T}dr_1\int_{0}^{1}dx_1\cdots\int_{0}^{T}dr_j\int_{0}^{1}dx_j\right.\right.\nonumber\\
    &\qquad\qquad\qquad\left.\left.\left|\sum_{l=1}^{d}\sum_{I\subset \{i_1,\cdots,i_j\}}\int_{s}^{t}\int_{0}^{1}D^I_{\alpha_I}D^{(l)}_{r,x} F \cdot D^{I^c}_{\alpha_{I^c}}D^{(l)}_{r,x} G \,drdx\vphantom{\sum_{l=1}^{d}}\right|^2\right\}^{\frac{p}{2}}\vphantom{\sum_{l=1}^{d}}\right].\nonumber
    \end{align*}
    Therefore, by the convexity, H\"{o}lder's inequality, and Fubini's theorem, we obtain that this last term is less than or equal to
    \begin{align}\label{DFDG sum}
    &\hat{C} \sum_{i_1,\cdots,i_j=1}^{d}\sum_{l=1}^{d}\sum_{I\subset \{i_1,\cdots,i_j\}}  \E\left[\left\{\int_{([0,T]\times[0,1])^{|I|}}d\alpha_I\int_{0}^{T}dr\int_{0}^{1}dx\,\left|D^I_{\alpha_I}D^{(l)}_{r,x} F\right|^2\right\}^{p}\right]^{\frac{1}{2}}\nonumber\\
       &\qquad\qquad\qquad\cdot \E\left[\left\{\int_{([0,T]\times[0,1])^{|I^c|}}d\alpha_{I^c}\int_{0}^{T}dr\int_{0}^{1}dx\, \left|D^{I^c}_{\alpha_{I^c}}D^{(l)}_{r,x} G\right|^2 \vphantom{\sum_{l=1}^{d}}\right\}^{p}\vphantom{\sum_{l=1}^{d}}\right]^{\frac{1}{2}}\nonumber\\
    & \leq\hat{C} \sum_{i_1,\cdots,i_j=1}^{d}\sum_{l=1}^{d}\sum_{I\subset \{i_1,\cdots,i_j\}} \E\left[\left\|D^{|I|+1}F\right\|_{\HH^{\otimes (|I|+1)}}^{2p}\right]^{\frac{1}{2}}\cdot \E\left[\left\|D^{|I^c|+1}G\right\|_{\HH^{\otimes (|I^c|+1)}}^{2p}\right]^{\frac{1}{2}}\nonumber\\
    &\leq \hat{C} \sum_{i_1,\cdots,i_j=1}^{d}\sum_{l=1}^{d}\sum_{I\subset \{i_1,\cdots,i_j\}}\left\|DF\right\|^p_{k,2p,\HH}\left\|DG\right\|^p_{k,2p,\HH} \leq C\,\left\|DF\right\|^p_{k,2p,\HH}\left\|DG\right\|^p_{k,2p,\HH}.
    \end{align}
One can easily derive from \eqref{norm DF DG}, \eqref{inner product DF DG Hts leq}, and \eqref{DFDG sum} the inequality \eqref{norm DF DG }.
  \qed\end{proof}
The next lemma gives a sharp estimate of the $\|\cdot\|_{0,2}$-norm of the random variables $H^{\beta}_{\pi_n}(F\,,\,G)$
\begin{lem}\label{H estimation}
  For a fixed integer $n\geq 1$, let $\pi_n=(t_1,\cdots,t_n)$ with $0=t_0<t_1<\cdots<t_n\leq T$ and  $F=(F_1,\cdots,F_n)$ be a $\pi_n$-nondegenerate
   random vector with values in $\R^{n\times d}$ such that, $F_i=(F_{i,1},\cdots,F_{i,d})$, for $i=1,\cdots,n$. Let  $\beta=(\beta_1,\cdots,\beta_m)$ with $\beta_{\theta}=(i_{\theta},k_{\theta})\in \{1,\cdots,n\}\times\{1,\cdots,d\}$, for $\theta=1,\cdots, m$, then there exists a constant $C>0$ such that
  \begin{equation}\label{H t1 tn inequality}
    \begin{split}
       &\left\|H^{\beta}_{\pi_n}(F\,,\,1)\right\|_{0,2} \leq C\,\left\|\left(\det{\Gamma_{F\,,\,\pi_n}}\right)^{-1}\right\|^m_{m,2^{m+2}}  \\
         &\qquad\qquad\cdot \prod_{\theta=1}^{m}\left\|DF_{i_{\theta},k_{\theta}}\right\|_{m,2^{2(m+nd)},\HH}\prod_{(i_0,k_0)\in O_{(i_{\theta},k_{\theta})}}\left\|DF_{i_0,k_0}\right\|^2_{m,2^{2(m+nd)},\HH},
    \end{split}
  \end{equation}
  where $O_{(i_{\theta},k_{\theta})}=\left\{(i_0,k_0)\in \{1,\cdots,n\}\times \{1,\cdots,d\}\,;\;(i_0,k_0)\neq (i_{\theta},k_{\theta})\right\}.$
\end{lem}

\begin{proof}
 By \eqref{H alpha} and \eqref{H i k}, we write
 \begin{align*}
    &\left\|H^{\beta}_{\pi_n}(F\,,\,1)\right\|_{0,2}\\
   &= \left\|H^{\beta_m}_{\pi_n}\left(F\,,\,H^{(\beta_1,\cdots,\beta_{m-1})}_{\pi_n}(F\,,\,1)\right)\right\|_{0,2}\nonumber \nonumber\\
    &=\left\|\sum_{j=1}^{n}\sum_{l=1}^{d}\delta\left(H^{(\beta_1,\cdots,\beta_{m-1})}_{\pi_n}(F\,,\,1)\left(\Gamma^{-1}_{F\,,\,\pi_n}\right)^{i_m,j}_{k_m,l}DF_{j,l}\mathbbm{1}_{[t_{j-1}\,,\,t_j]\times[0\,,\,1]}\right)\right\|_{0,2}.
    \end{align*}
     According to \eqref{continuite of the divergence operator}, and H\"{o}lder's inequality for the Malliavin norms (cf. \cite[Proposition 1.10]{Watanabe}), we obtain that this last term is less than or equal to
    \begin{align}\label{H beta estimation}
    & C\,\left\|H^{(\beta_1,\cdots,\beta_{m-1})}_{\pi_n}(F\,,\,1)\right\|_{1,2^2}\,\sum_{j=1}^{n}\sum_{l=1}^{d} \left\|\left(\Gamma^{-1}_{F\,,\,\pi_n}\right)^{i_m,j}_{k_m,l}\right\|_{1,2^3} \left\|DF_{j,l}\mathbbm{1}_{[t_{j-1},t_j]\times[0,1]}\right\|_{1,2^3,\HH}\nonumber\\
    & \leq C\,\left\|H^{(\beta_1,\cdots,\beta_{m-1})}_{\pi_n}(F\,,\,1)\right\|_{1,2^2}\,\sum_{j=1}^{n}\sum_{l=1}^{d} \left\|\left(\Gamma^{-1}_{F\,,\,\pi_n}\right)^{i_m,j}_{k_m,l}\right\|_{1,2^3} \left\|DF_{j,l}\right\|_{1,2^3,\HH}.
 \end{align}
 On the other hand, we know that the inverse of the matrix $\Gamma_{F\,,\,\pi_n}$ is the transpose of its cofactor matrix, that we denote by $A_{F\,,\,\pi_n}$,
multiplied by the inverse of its determinant i.e.,
 \begin{equation}\label{Gamma determinant A tronspose}
   \Gamma^{-1}_{F\,,\,\pi_n}=\frac{1}{\det{\Gamma_{F\,,\,\pi_n}}} A'_{F\,,\,\pi_n},
 \end{equation}
  where $A'_{F\,,\,\pi_n}$ is the transpose of $A_{F\,,\,\pi_n}$. Set $N=\{1,\cdots,n\}$ and $D=\{1,\cdots,d\}$. Let $B(i,k\,;\,j,l)=\left(b(i_0,k_0\,;\,j_0,l_0)\right),$ where $(i_0,k_0),(j_0,l_0)\in O:=\{(p,q)\in N\times D\,;\;(p,q)\neq (n,d)\}$, be the $(n\times d-1)\times(n\times d-1)$-matrix obtained by removing from $\Gamma_{F\,,\,\pi_n}$ its row $(i,k)$  and column $(j,l)$. Let $O_{(i,k)}$ be the set of $(i_0,k_0)\in N\times D$ such that $(i_0,k_0)\neq (i,k)$, $O_{(j,l)}$ the set of $(j_0,l_0)\in N\times D$ with $(j_0,l_0)\neq (j,l)$,
  $\Xi=\{\eta\,;\;\eta \text{ permutation of}\,\, O\}$, and
  $\Pi:=\{\varrho\,;\;\varrho:O_{(i,k)}\to O_{(j,l)}\;\text{bijective map}\}$,  hence by H\"{o}lder’s inequality for the Malliavin norms and Lemma \ref{inner product leq norms} we have
  \begin{align}\label{A cofactor elelments}
    \left\|(A_{F\,,\,\pi_n})^{i,j}_{k,l}\right\|_{1,2^4} &= \left\|\det\left(B(i,k\,;\,j,l)\right)\right\|_{1,2^4}\nonumber\\
    &=\left\|\sum_{\eta\in \Xi}\,\varepsilon(\eta)\prod_{(p,q)\in O}b\left(p,q\,;\,\eta(p,q)\right)\right\|_{1,2^4}  \nonumber\\
    &\leq \sum_{\eta\in \Xi}\,\prod_{(p,q)\in O}\left\|b\left(p,q\,;\,\eta(p,q)\right)\right\|_{1,2^{nd +2}} \nonumber\\
    &= \sum_{\varrho\in \Pi}\, \prod_{(i_0,k_0)\in O_{(i,k)}}\left\|\left<DF_{i_0,k_0}\,,\,DF_{\varrho(i_0,k_0)} \right>_{\HH_{t_{i_0-1},t_{i_0}}}\right\|_{1,2^{nd +2}}\nonumber\\
    &\leq C \sum_{\varrho\in \Pi}\, \prod_{(i_0,k_0)\in O_{(i,k)}}\left\|DF_{i_0,k_0}\right\|_{1,2^{2(nd) +4},\HH}\left\|DF_{\varrho(i_0,k_0)}\right\|_{1,2^{2(nd) +4},\HH}.
  \end{align}
  According to \eqref{Gamma determinant A tronspose}, \eqref{A cofactor elelments}, and H\"{o}lder's inequality for the Malliavin norms, we get
  \begin{align}\label{Gamma im j km l}
    \left\|\left(\Gamma^{-1}_{F\,,\,\pi_n}\right)^{i_m,j}_{k_m,l}\right\|_{1,2^3}
    &\leq C\,\left\|\left(\det{\Gamma_{F\,,\,\pi_n}}\right)^{-1}\right\|_{1,2^4}\nonumber\\
    &\cdot\sum_{\mu\in \Pi_m}\, \prod_{(j_0,l_0)\in O_{(j,l)}}\left\|DF_{j_0,l_0}\right\|_{1,2^{2nd +4},\HH}\left\|DF_{\mu(j_0,l_0)}\right\|_{1,2^{2nd +4},\HH},
  \end{align}
  where $\Pi_m:=\{\mu\,;\;\mu:O_{(j,l)}\to O_{(i_m,k_m)}\;\text{bijective map}\}$ and $O_{(i_m,k_m)}$ is the set of $(i_0,k_0)\in N\times D$ such that $(i_0,k_0)\neq (i_m,k_m)$. We derive from  \eqref{Gamma im j km l} that
  \begin{align}\label{product 2}
     & \left\|\left(\Gamma^{-1}_{F\,,\,\pi_n}\right)^{i_m,j}_{k_m,l}\right\|_{1,2^3}\nonumber \\
     &\leq C\,\left\|\left(\det{\Gamma_{F\,,\,\pi_n}}\right)^{-1}\right\|_{1,2^4}\sum_{\mu\in \Pi_m}\, \prod_{(j_0,l_0)\in O_{(j,l)}}\left\|DF_{j_0,l_0}\right\|_{1,2^{2nd +4},\HH}\nonumber \\
     &\qquad\qquad\qquad\qquad\cdot\prod_{(i_0,k_0)\in O_{(i_m,k_m)}}\left\|DF_{i_0,k_0}\right\|_{1,2^{2nd +4},\HH} \nonumber\\
     &= C\,|\Pi_m|\,\left\|\left(\det{\Gamma_{F\,,\,\pi_n}}\right)^{-1}\right\|_{1,2^4}\left\|DF_{j,l}\right\|_{1,2^{2nd +4},\HH}\left\|DF_{i_m,k_m}\right\|_{1,2^{2nd +4},\HH}\nonumber \\
     &\qquad\qquad\qquad\qquad\cdot\prod_{(i_0,k_0)\in O_{(j,l)}\cap\, O_{(i_m,k_m)}}\left\|DF_{i_0,k_0}\right\|^2_{1,2^{2nd +4},\HH},
  \end{align}
  where $|\Pi_m|$ is the cardinality of $\Pi_m$. Combining \eqref{H beta estimation} and \eqref{product 2}, we obtain
  \begin{align*}
     & \left\|H^{\beta}_{\pi_n}(F\,,\,1)\right\|_{0,2} \nonumber  \\
     & \leq C_1\,\left\|H^{(\beta_1,\cdots,\beta_{m-1})}_{\pi_n}(F\,,\,1)\right\|_{1,2^2}\left\|\left(\det{\Gamma_{F\,,\,\pi_n}}\right)^{-1}\right\|_{1,2^4}\left\|DF_{i_m,k_m}\right\|_{1,2^{2nd +4},\HH}\nonumber\\
     &\cdot \sum_{j=1}^{n}\sum_{l=1}^{d}\left\|DF_{j,l}\right\|_{1,2^{2nd +4},\HH}\prod_{(i_0,k_0)\in O_{(j,l)}\cap\, O_{(i_m,k_m)}}\left\|DF_{i_0,k_0}\right\|^2_{1,2^{2nd +4},\HH}  \left\|DF_{j,l}\right\|_{1,2^3,\HH}\nonumber \\
     & \leq C_2\,\left\|H^{(\beta_1,\cdots,\beta_{m-1})}_{\pi_n}(F\,,\,1)\right\|_{1,2^2}\left\|\left(\det{\Gamma_{F\,,\,\pi_n}}\right)^{-1}\right\|_{1,2^4}\left\|DF_{i_m,k_m}\right\|_{1,2^{2nd +4},\HH}\nonumber\\
     & \qquad\qquad\qquad\qquad\cdot \prod_{(i_0,k_0)\in O_{(i_m,k_m)}}\left\|DF_{i_0,k_0}\right\|^2_{1,2^{2nd +4},\HH}
  \end{align*}
  Finally, by recurrence on $m$ we get the inequality \eqref{H t1 tn inequality}, which finishes  the proof of Lemma \ref{H estimation}.
\qed\end{proof}

Now we will state the criterion for smoothness of density for a random vector which is $\pi_n$-nondegenerate, and give the formula of its derivatives. The proof is similar to \cite[Theorem 2.1.4]{Nualart}.
\begin{thm}\label{existence of density general}
For a fixed integer $n\geq 1$, let $\pi_n=(t_1,\cdots,t_n)$ with  $0=t_0<t_1<\cdots<t_n\leq T$ and  $F=(F_1,\cdots,F_n)$ be a $\pi_n$-nondegenerate random vector with values in $\R^{n\times d}$  such that $F_i=(F_{i,1},\cdots,F_{i,d})$, for $i=1,\cdots,n$. Then $F$
possesses a density $p_{\pi_n}(x)$, where $x=(x_{i,k}\,;\;1\leq i\leq n\,,\;1\leq k\leq d)\in \R^{n\times d}$, which is infinitely differentiable and given by
\begin{equation}\label{density}
  p_{\pi_n}(x)=\E\left[\mathbbm{1}_{\{F>x\}}H^{\gamma}_{\pi_n}(F\,,\,1)\right],
\end{equation}
where $\gamma=\left((i,k)\,;\;1\leq i\leq n\,,\;1\leq k\leq d\right)$. Fix $m\geq 1$. For any multi-index $\beta=(\beta_{1},\cdots,\beta_{m})$ with $\beta_{\theta}=(i_{\theta},k_{\theta})\in \{1,\cdots,n\}\times\{1,\cdots,d\}$, for $\theta=1,\cdots,m$, we have
\begin{equation}\label{derivatives of density}
  \partial_{\beta}p_{\pi_n}(x)=(-1)^{m}\E\left[\mathbbm{1}_{\{F>x\}}H^{(\beta,\gamma)}_{\pi_n}(F\,,\,1)\right],
\end{equation}
where $\mathbbm{1}_{\{F>x\}}:=\prod_{i=1}^{n}\prod_{k=1}^{d}\mathbbm{1}_{\{F_{i,k}>x_{i,k}\}}$.
\end{thm}
\section{Proof of Theorem \ref{estimation density}}
Our purpose in  this section is to establish the Gaussian-type lower bound for the density
of $u(t,x)-u(s,x)$, when $0\leq s<t \leq T$, and the upper bound of Gaussian-type for the partial derivatives of the density
of $(u(t_1,x)-u(t_0,x),\cdots,u(t_n,x)-u(t_{n-1},x))$, where $0=t_0<t_1<\cdots<t_n\leq T$.
\subsection{The Gaussian-type lower bound}
The proof of Theorem \ref{estimation density}\textbf{(a)} is quite similar to that in \cite[Section 5]{DalangKhoshnevisanNualartmultiplicative} (see also \cite[Theorem 10]{Higa} for the original work in dimension $d=1$). Thus we will only sketch the main ideas.

\begin{rem}
    For clarity reasons, we borrow most of the notations in this subsection  from \cite[Section 5]{DalangKhoshnevisanNualartmultiplicative} and \cite{Higa}. Therefore, the notations used here are independent of the rest of this paper.
\end{rem}

\begin{proof}[Proof of Theorem \ref{estimation density}\normalfont{\textbf{(a)}}] Let $x\in (0,1)$ and $0\leq s<t \leq T$, the proof of Theorem \ref{estimation density}\normalfont{\textbf{(a)}} follows the same lines as in \cite[Section 5]{DalangKhoshnevisanNualartmultiplicative} (or \cite{Higa} for $d=1$).
We only sketch the main points where there is a difference between the chose of $F=u(t,x)$ (the study of \cite{DalangKhoshnevisanNualartmultiplicative}) and $F=u(t,x)-u(s,x)$. The idea of Kohatsu-Higa \cite{Higa} is to show that  $u(t,x)-u(s,x)$ is  a $d$-dimensional uniformly elliptic random vector and therefore apply \cite[Theorem 5]{Higa}.

Set $g(r,v):=G_{t-r}(x,v)$. Let us consider a sufficiently fine partition $\{s=t_0<\cdots<t_N=t\}$. By the properties of $G$, i.e., symmetry, semi-group property and, Gaussian-type bound, there exist two positive constants $c_1$ and $c_2$ such that
$$c_1|t-s|^{1/4}\leq \|g\|_{L^2([s,t]\times[0,1])}\leq c_2|t-s|^{1/4}.$$
Let
$$\tilde{F}^i_n=F^i_n-u_i(s,x),$$
where for $1\leq i\leq d$ and $0\leq n\leq N$, $F^i_n$ are given as in  \cite[Section 5]{DalangKhoshnevisanNualartmultiplicative} by
\begin{equation*}
  \begin{split}
     F^i_n &= \int_{0}^{t_n}\int_{0}^{1}G_{t-r}(x,v)\sum_{j=1}^{d}\sigma_{ij}(u(r,v))W^j(dr,dv) \\
       & \qquad+\int_{0}^{t_n}\int_{0}^{1}G_{t-r}(x,v)b_i(u(r,v))dv\,dr.
  \end{split}
\end{equation*}
Point out that $\tilde{F}^i_n\in \mathcal{F}_{t_n}$. We will need the following lemma.
\begin{lem}\label{lem lower}
  We assume \normalfont{\textbf{A1}} and \normalfont{\textbf{A2}}. Then
  \begin{description}
    \item[(1)] $\|\tilde{F}^i_n\|_{k,p}\leq c_{k,p},\;\;1\leq i\leq d$;
    \item[(2)] $\|\left(\det \gamma_{\tilde{F}_n}(t_{n-1})\right)^{-1}\|_{p,t_{n-1}}\leq c_p (\Delta_{n-1}(g))^{-d}:= c_p(\|g\|^2_{L^2([t_{n-1},t_n]\times[0,1])})^{-d},$
  \end{description}
  where $\|\cdot\|_{p,t_{n-1}}$ denotes the conditional $L^p$-norm and $\gamma_{\tilde{F}_n}(t_{n-1})$ is the conditional Malliavin matrix of $\tilde{F}_n$ given $\mathcal{F}_{t_n}$.
\end{lem}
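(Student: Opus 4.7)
The plan is to follow the strategy of Kohatsu-Higa \cite{Higa} and \cite{DalangKhoshnevisanNualartmultiplicative}, showing that passing from $F=u(t,x)$ to $F=u(t,x)-u(s,x)$ costs nothing in the relevant estimates. Part (1) is a routine Malliavin-calculus computation, while part (2) hinges on the uniform ellipticity A2 together with the sharp moment bound in Lemma \ref{Morien}.

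For (1), observe that $F^i_n$ is given by exactly the same Walsh stochastic integral and pathwise integral as the mild solution $u_i(t,x)$, with the only change being the upper endpoint of time integration replaced by $t_n\leq t$. Hence the argument of Proposition \ref{prop derivatives SHE} applies unchanged: the iterated Malliavin derivatives of $F^i_n$ satisfy a linear SPDE of the same form, and a Gronwall-type iteration gives $L^p$-bounds for $\|D^k F^i_n\|_{\HH^{\otimes k}}$ uniform in $n$ and in the partition. Combined with the uniform bound on $\|u_i(s,x)\|_{k,p}$ supplied by Proposition \ref{prop derivatives SHE}, the triangle inequality yields (1).

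The key observation for (2) is that $s=t_0\leq t_{n-1}$, so $u_i(s,x)$ is $\F_{t_{n-1}}$-measurable, and by \eqref{derivative order 1 equal 0} its Malliavin derivative vanishes identically on $[t_{n-1},t_n]\times[0,1]$. Hence on the region defining the conditional Malliavin matrix, $D\tilde{F}^i_n=DF^i_n$, so $\gamma_{\tilde F_n}(t_{n-1})=\gamma_{F_n}(t_{n-1})$, and (2) reduces to the corresponding estimate for $F_n$. To establish the latter, decompose, using \eqref{first derivative SHE}--\eqref{akirvtx} with $t$ replaced by $t_n$, for $r\in[t_{n-1},t_n]$,
$$D^{(l)}_{r,v}F^i_n=G_{t-r}(x,v)\,\sigma_{i,l}(u(r,v))+\tilde a_i(l,r,v,t_n,x),$$
and for any unit vector $z\in\R^d$ apply the elementary inequality $(A+B)^2\geq\tfrac12 A^2-B^2$ to
$$z^{\top}\gamma_{F_n}(t_{n-1})z=\sum_{l=1}^d\int_{t_{n-1}}^{t_n}\!\!\int_0^1\Big(\sum_{i=1}^d z_i\,D^{(l)}_{r,v}F^i_n\Big)^2 dv\,dr.$$
By A2 the principal term dominates $\tfrac{\rho^2}{2}\Delta_{n-1}(g)$, while the remainder, after Cauchy--Schwarz in $z$, is controlled in $L^p$ by the correction-term analogue of Lemma \ref{Morien}.

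The main technical obstacle is the passage to the conditional $L^p$-norm $\|\cdot\|_{p,t_{n-1}}$: Lemma \ref{Morien} is stated as an unconditional moment bound, so one needs a conditional analogue before concluding. Since $\Delta_{n-1}(g)$ is of order $(t_n-t_{n-1})^{1/2}$ by the Gaussian-type bound \eqref{bound Green kernel}, while the correction contributes a term of strictly higher order in $(t_n-t_{n-1})$, a sufficiently fine partition forces the remainder to be negligible compared with the principal, yielding $\lambda_{\min}\bigl(\gamma_{F_n}(t_{n-1})\bigr)\geq c\,\Delta_{n-1}(g)$ in conditional $L^p$; taking determinants then gives (2). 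The detailed conditional Gronwall estimate needed for this step is carried out in \cite[Section~5]{DalangKhoshnevisanNualartmultiplicative}, and our reduction $\gamma_{\tilde F_n}(t_{n-1})=\gamma_{F_n}(t_{n-1})$ lets us import their conclusion verbatim.
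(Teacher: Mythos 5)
Your proposal is correct and follows essentially the same route as the paper: for (1), combine the bound on $F^i_n$ (Lemma~5.1(i) of \cite{DalangKhoshnevisanNualartmultiplicative}, whose proof you sketch) with the uniform bound on $u_i(s,x)$ from \eqref{sup derivative SHE}; for (2), exploit $s\leq t_{n-1}$ and \eqref{derivative order 1 equal 0} to get $\gamma_{\tilde F_n}(t_{n-1})=\gamma_{F_n}(t_{n-1})$ and then import the estimate for $F_n$ from the cited references, which is exactly what the paper does.
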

\begin{proof}[Proof of Lemma \ref{lem lower}]
  The point \normalfont{\textbf{(1)}} is a consequence of \cite[Lemma 5.1(i)]{DalangKhoshnevisanNualartmultiplicative} and \eqref{sup derivative SHE}. Otherwise, by the fact that $s\leq t_{n-1}$ and \eqref{derivative order 1 equal 0}, we have the following:
  $$\gamma_{\tilde{F}_n}(t_{n-1})=\gamma_{F_n}(t_{n-1}),$$
  where $\gamma_{F_n}(t_{n-1})$ is the conditional Malliavin matrix of $F_n$ given $\mathcal{F}_{t_n}$. Then we can conclude the proof of \normalfont{\textbf{(2)}} by \cite[Lemma 5.1(ii)]{DalangKhoshnevisanNualartmultiplicative} or \cite[Lemma 7]{Higa} (the last reference is for $d=1$, but the same ideas, in the proof, still work for $d>1$).
\qed\end{proof}
Continuing the proof of Theorem \ref{estimation density}\normalfont{\textbf{(a)}}. We remark that
$$\tilde{F}^i_n-\tilde{F}^i_{n-1}=F^i_n-F^i_{n-1}.$$
Hence, in order to get the expansion of $\tilde{F}^i_n-\tilde{F}^i_{n-1}$ as in \cite[Lemma 9]{Higa}, one has to obtain that expansion (i.e., as in \cite[Lemma 9]{Higa}) for $F^i_n-F^i_{n-1}$. The remainder of the proof is the same as in \cite[Section 5]{DalangKhoshnevisanNualartmultiplicative}.
\qed\end{proof}
\subsection{The Gaussian-type upper bound for the partial derivatives of the density}
Our aim in this subsection is to prove Theorem  \ref{estimation density}\textbf{(b)}. Let $p_{\pi_n,x}(\xi)$ be the joint density of the $\R^{n\times d}$-valued random vector
\begin{equation}\label{Z}
  Z=(u(t_1,x)-u(t_0,x),\cdots,u(t_n,x)-u(t_{n-1},x)),
\end{equation}
where $u(t_i,x)-u(t_{i-1},x)=(u_1(t_i,x)-u_1(t_{i-1},x),\cdots,u_d(t_i,x)-u_d(t_{i-1},x))$,  $\xi=(\xi_{i,k}\,;\; 1\leq i\leq n\,,\;1\leq k\leq d)\in \R^{n\times d}$,  $x\in (0,1)$, and $\pi_n=(t_1,\cdots,t_n)$ with $0=t_0<t_1<\cdots<t_n\leq T$. The existence of this joint density, which is infinitely differentiable, will be a consequence of Theorem \ref{existence of density general}, Proposition \ref{prop derivatives SHE}, and Theorem \ref{thm estimation det moins 1}.

The following proposition gives an upper bound for the Malliavin norm of the derivative of the increments of the process $\{u(t,x)\,,\;t\in [0,T]\}$.
\begin{prop}[Proposition 6.2 in \cite{DalangKhoshnevisanNualartmultiplicative}]\label{Malliavin norm increment SHE}
  Assume \normalfont{\textbf{A1}}. Then for any $0\leq s \leq t\leq T$, $x\in [0,1]$, $p>1$, and $m\geq 1$,
  $$\E\left[\|D^m(u_k(t,x)-u_k(s,x))\|^p_{\mathcal{H}^{\otimes m}}\right]\leq C_T \,|t-s|^{p/4}\,,\qquad k=1,\cdots,d.$$
\end{prop}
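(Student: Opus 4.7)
My plan is to prove the bound following the Bally--Pardoux style analysis of the space-time increments of $u_k$, combined with the explicit formula for iterated Malliavin derivatives in Proposition \ref{prop derivatives SHE}. The core idea is that $D^m(u_k(t,x)-u_k(s,x))$ can be written as a sum of stochastic and Lebesgue integrals driven either by the kernel $G_{t-r}(x,v)$ on the short time strip $[s,t]\times[0,1]$, or by the difference kernel $G_{t-r}(x,v)-G_{s-r}(x,v)$ on $[0,s]\times[0,1]$, and that both objects have $L^2$-norm controlled by $|t-s|^{1/2}$.

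First, the two Green kernel estimates
$$\int_s^t\!\!\int_0^1 G_{t-r}(x,v)^2\,dv\,dr\le C|t-s|^{1/2},\qquad \int_0^s\!\!\int_0^1 [G_{t-r}(x,v)-G_{s-r}(x,v)]^2\,dv\,dr\le C|t-s|^{1/2},$$
follow from the Gaussian bound \eqref{bound Green kernel} and the semi-group property of $G$ by a direct computation (reducing to the standard Gaussian kernel $\phi_{t-s}$). These are the source of the $|t-s|^{p/4}$ scaling: applying BDG to a stochastic integral with such an $L^2$ kernel produces $|t-s|^{p/4}$, while Minkowski's inequality applied to the Lebesgue integrals gives a better power.

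For $m=1$, I would combine \eqref{first derivative SHE}--\eqref{akirvtx} with the decomposition
\begin{align*}
u_k(t,x)-u_k(s,x)&=\sum_{l=1}^{d}\int_s^t\!\!\int_0^1 G_{t-r}(x,v)\sigma_{k,l}(u(r,v))W^l(dr,dv)\\
&\quad+\sum_{l=1}^{d}\int_0^s\!\!\int_0^1 [G_{t-r}(x,v)-G_{s-r}(x,v)]\sigma_{k,l}(u(r,v))W^l(dr,dv)\\
&\quad+\text{(analogous Lebesgue drift terms).}
\end{align*}
Applying $D$ term-by-term via the chain rule, each resulting summand is a stochastic or deterministic integral whose ``kernel factor'' is of one of the two types above, and whose other factors involve $\sigma_{k,l}$, $b_k$, their partial derivatives (bounded by A1) and first Malliavin derivatives of $u$ (with uniform moment bounds by \eqref{sup derivative SHE}). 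BDG, Minkowski and H\"{o}lder then give the desired $|t-s|^{p/4}$ bound.

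For general $m$, I would proceed by induction using the iterated-derivative formula in Proposition \ref{prop derivatives SHE}. The iterated derivative of the increment again splits into a ``leading'' sum where one of the boundary kernel factors $G_{t-r_p}(x,v_p)$ is replaced by $\mathbbm{1}_{[s,t]}(r_p)G_{t-r_p}(x,v_p)$ or by $\mathbbm{1}_{[0,s]}(r_p)[G_{t-r_p}(x,v_p)-G_{s-r_p}(x,v_p)]$, plus nested stochastic and Lebesgue integrals of the same structural type, whose integrands involve lower-order Malliavin derivatives of the coefficients. Using \eqref{sup derivative SHE} and A1 to control those factors uniformly, a Gronwall-type absorption on $t$ together with the two $L^2$ kernel estimates above closes the induction and delivers $|t-s|^{p/4}$. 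The main technical obstacle is purely combinatorial: carefully tracking which one of the $m$ ``slots'' $r_p$ carries the difference kernel in each term of the decomposition, so that exactly one factor of $|t-s|^{1/2}$ (and not less) appears in the $\mathcal{H}^{\otimes m}$-norm computation for every summand.
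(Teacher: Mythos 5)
The paper does not actually prove this proposition: it is quoted verbatim as Proposition~6.2 of Dalang--Khoshnevisan--Nualart and used as a black box, so there is no internal proof for your attempt to be compared against. Judged on its own merits, your sketch reproduces the standard line of argument used there and in Bally--Pardoux: the two kernel estimates $\int_s^t\!\int_0^1 G_{t-r}(x,v)^2\,dv\,dr\le C|t-s|^{1/2}$ and $\int_0^s\!\int_0^1[G_{t-r}(x,v)-G_{s-r}(x,v)]^2\,dv\,dr\le C|t-s|^{1/2}$, the decomposition of the time increment, the chain rule applied to the explicit recursion for $D^m u_k$ from Proposition~\ref{prop derivatives SHE}, BDG for Hilbert-space-valued stochastic integrals, H\"older and Minkowski, and a Gronwall closure of the implicit equations. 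This is the right skeleton and the scaling bookkeeping is correct (exactly one $|t-s|^{1/2}$ factor per $\HH^{\otimes m}$-norm squared).

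Two places where the sketch glosses over details you would need to supply. First, when bounding $\|D^m(u_k(t,x)-u_k(s,x))\|_{\HH^{\otimes m}}$ it is cleaner to split the $r_1,\dots,r_m$ integration region by $\max_p r_p\le s$ versus $\max_p r_p>s$: on the second region $D^m u_k(s,x)=0$ outright (by the vanishing of the derivative for $r>s$), so one is estimating $D^m u_k(t,x)$ alone there and the factor $|t-s|^{1/2}$ comes from the short support $r_p\in[s,t]$ of the ``leading'' kernel $G_{t-r_p}(x,v_p)$; on the first region the difference kernel $G_{t-r_p}-G_{s-r_p}$ appears. Your phrase ``which one of the $m$ slots carries the difference kernel'' is consistent with this, but the case split by the location of $\max_p r_p$ is what makes the combinatorics transparent. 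Second, the ``Gronwall-type absorption'' deserves one more sentence: the recursion in Proposition~\ref{prop derivatives SHE} expresses $D^m u_k(t,x)$ through stochastic and Lebesgue integrals whose integrands again involve derivatives of $u$ up to order $m$, so the estimate is closed by introducing $\Psi_m(t):=\sup_{x}\E\big[\|D^m(u_k(t,x)-u_k(s,x))\|_{\HH^{\otimes m}}^p\big]$, bounding the lower-order contributions via the induction hypothesis and \eqref{sup derivative SHE}, and applying Gronwall in $t$; this is where uniformity in $x$ and boundedness of the coefficients and their derivatives (hypothesis \textbf{A1}) enter. With those two points made explicit, the proof is complete.
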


Now we will investigate the $\pi_n$-Malliavin matrix, $\Gamma_{Z\,,\,\pi_n}$, of $Z$ ($Z$ is given by \eqref{Z}).  Note that $\Gamma_{Z\,,\,\pi_n}=\left(\Gamma^{i,j}\right)_{1\leq i,j\leq n}$ is the random block matrix, where $\Gamma^{i,j}=\left(\Gamma^{i,j}_{k,l}\right)_{1\leq k,l\leq d}$ and the $\Gamma^{i,j}_{k,l}$ are given by
  \begin{equation}\label{Gamma i j k l}
    \Gamma^{i,j}_{k,l}=\left<D(u_k(t_i,x)-u_k(t_{i-1},x))\,,\,D(u_l(t_j,x)-u_l(t_{j-1},x))\right>_{\HH_{t_{i-1},t_i}},
  \end{equation}
  here $1\leq i,j\leq n$, $1\leq k,l\leq d$, and $\HH_{t_{i-1},t_i}=L^2\left([t_{i-1},t_i]\times[0,1],\R^d\right)$. The matrix $\Gamma_{Z\,,\,\pi_n}$ is  not  a symmetric matrix, in general, (but the matrices $\Gamma^{i,i}$, for $i=1,\cdots,n$, are symmetric). Based on the formula \eqref{derivative order 1 equal 0}, we get the following key remark
  \begin{rem}
   $\Gamma_{Z\,,\,\pi_n}$ is a triangular block matrix almost surly, i.e., for all $1\leq i,j\leq n$ with $j<i$, we have $\Gamma^{i,j}\equiv 0$ a.s.
  \end{rem}
A consequence of the above remark is that
\begin{equation}\label{det malliavin matrix}
  \det\left(\Gamma_{Z\,,\,\pi_n}\right)=\prod_{i=1}^{n}\det\left(\Gamma^{i,i}\right)\qquad\text{a.s.}
\end{equation}

The below theorem gives an estimate on the Malliavin norm of the determinant of the inverse of the matrix $\Gamma_{Z\,,\,\pi_n}$.
\begin{thm}\label{thm estimation det moins 1}
  Assume \normalfont{\textbf{A1}} and \normalfont{\textbf{A2}}. Let $\pi_n=(t_1,\cdots,t_n)$ with $0=t_0<t_1<\cdots<t_n\leq T$, $x\in (0,1)$, and $Z$ given by \eqref{Z}, then for any $k\geq 0$, $p>1$,
  $$\|\left(\det \Gamma_{Z\,,\,\pi_n}\right)^{-1}\|_{k,p}\leq K\prod_{i=1}^{n}(t_i-t_{i-1})^{-d/2},$$
  where $K$ is a positive constant.
\end{thm}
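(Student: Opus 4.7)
The plan is to exploit the block-triangular structure of $\Gamma_{Z,\pi_n}$ recorded in the remark preceding the theorem. Since $\Gamma^{i,j}\equiv 0$ almost surely whenever $j<i$, one has $\det\Gamma_{Z,\pi_n}=\prod_{i=1}^{n}\det\Gamma^{i,i}$ a.s., hence
$$(\det\Gamma_{Z,\pi_n})^{-1}=\prod_{i=1}^{n}(\det\Gamma^{i,i})^{-1}.$$
Iterating Hölder's inequality for Malliavin norms (\cite[Proposition 1.10]{Watanabe}) reduces the statement to the single-block estimate
$$\|(\det\Gamma^{i,i})^{-1}\|_{k,q}\leq C\,(t_i-t_{i-1})^{-d/2}$$
for every $i=1,\dots,n$ and every $q>1$, with a constant independent of the partition.

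The key algebraic observation is that $\Gamma^{i,i}$ coincides with the Malliavin matrix of $u(t_i,x)$ restricted to $[t_{i-1},t_i]\times[0,1]$: by \eqref{derivative order 1 equal 0}, $D^{(j)}_{r,v}u_k(t_{i-1},x)=0$ for $r>t_{i-1}$, so
$$\Gamma^{i,i}_{k,l}=\sum_{j=1}^{d}\int_{t_{i-1}}^{t_i}\!\!\int_{0}^{1} D^{(j)}_{r,v}u_k(t_i,x)\,D^{(j)}_{r,v}u_l(t_i,x)\,dv\,dr.$$
The pure $L^p$ bound $\|(\det\Gamma^{i,i})^{-1}\|_{0,q}\leq C(t_i-t_{i-1})^{-d/2}$ is then an exact analogue of Lemma \ref{lem lower}\textbf{(2)} for the sub-interval $[t_{i-1},t_i]$. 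One runs the Kohatsu-Higa uniform ellipticity method on that sub-interval: assumption \textbf{A2} together with the lower bound in \eqref{bound Green kernel} yields $\|G_{t_i-\cdot}(x,\cdot)\|^2_{L^2([t_{i-1},t_i]\times[0,1])}\asymp(t_i-t_{i-1})^{1/2}$, after which a further partition of $[t_{i-1},t_i]$, Norris-type estimates, and \cite[Lemma 5.1(ii)]{DalangKhoshnevisanNualartmultiplicative} (or \cite[Lemma 7]{Higa}) produce the required moment bound.

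To promote this $L^p$ bound to the full Sobolev norm $\|\cdot\|_{k,q}$, I expand the $j$-th Malliavin derivative of $(\det\Gamma^{i,i})^{-1}$ using the Faà di Bruno formula together with the classical identity $D\log\det M=\operatorname{tr}(M^{-1}DM)$. This writes $D^j(\det\Gamma^{i,i})^{-1}$ as a finite linear combination of terms of the form $(\det\Gamma^{i,i})^{-1-r}\,P_{j,r}$, with $P_{j,r}$ a polynomial in the Malliavin derivatives (of order at most $j$) of the entries $\Gamma^{i,i}_{k,l}$. A further application of Hölder for Malliavin norms bounds the $L^q$-norm of each such term by a power of $\|(\det\Gamma^{i,i})^{-1}\|_{0,\text{large}}$ times Sobolev norms $\|\Gamma^{i,i}_{k,l}\|_{k,\text{large}}$; the latter are controlled by $C(t_i-t_{i-1})^{1/2}$ via Lemma \ref{inner product leq norms} combined with Proposition \ref{Malliavin norm increment SHE}, after again using the vanishing of $Du_k(t_{i-1},x)$ on $[t_{i-1},t_i]\times[0,1]$ to identify the entries with inner products of increments. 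The positive powers of $(t_i-t_{i-1})^{1/2}$ that arise never outweigh the negative $-d/2$ contribution coming from $(\det\Gamma^{i,i})^{-1}$, and the claimed single-block bound follows. The main obstacle is the $L^p$ lower bound on $\det\Gamma^{i,i}$: the delicate point is adapting the Kohatsu-Higa / Dalang-Khoshnevisan-Nualart ellipticity argument from the full interval $[0,t_i]$ to the sub-interval $[t_{i-1},t_i]$ and tracking that the constants depend only on $T$, not on the partition. Once this is in place, the block-triangular structure of $\Gamma_{Z,\pi_n}$ removes all cross-block interactions and the product bound $\prod_i(t_i-t_{i-1})^{-d/2}$ is immediate.
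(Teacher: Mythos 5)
Your overall skeleton matches the paper exactly: you reduce via the block-triangular structure and H\"older for Malliavin norms to a single-block estimate $\|(\det\Gamma^{i,i})^{-1}\|_{k,q}\leq C(t_i-t_{i-1})^{-d/2}$, you correctly identify $\Gamma^{i,i}$ with the Malliavin matrix of $u(t_i,x)$ restricted to $[t_{i-1},t_i]\times[0,1]$ using \eqref{derivative order 1 equal 0}, and your treatment of the higher-order terms (expand $D^j(\det\Gamma^{i,i})^{-1}$ into terms $(\det\Gamma^{i,i})^{-1-r}P_{j,r}$, then H\"older plus Lemma \ref{inner product leq norms} plus Proposition \ref{Malliavin norm increment SHE}) is the same bookkeeping the paper carries out via iterating $D((\det\Gamma^{i,i})^{-1})=-(\det\Gamma^{i,i})^{-2}D(\det\Gamma^{i,i})$ and the cofactor expansion. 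The exponents in fact balance exactly to $-d/2$ rather than merely ``never outweighing'' it, as you can see by counting $(r+1)$ negative half-powers against $r$ positive ones.

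Where you genuinely diverge from the paper is in the key $L^p$ bound, and this is also where your proposal has a real gap. You propose to sub-partition $[t_{i-1},t_i]$ and run the Kohatsu-Higa uniform ellipticity machinery, citing \cite[Lemma 5.1(ii)]{DalangKhoshnevisanNualartmultiplicative} as if it applied off the shelf. But that lemma is a statement about the \emph{conditional} Malliavin matrix of the \emph{intermediate truncations} $\tilde F_n$ built along a sufficiently fine auxiliary partition, not about the restricted Malliavin matrix of the genuine increment $u(t_i,x)-u(t_{i-1},x)$ over an arbitrary interval $[t_{i-1},t_i]$. You acknowledge this is ``the delicate point'' but do not actually carry out the adaptation or show that the constants are uniform over the partition $\pi_n$, which is precisely what the theorem asserts. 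The paper avoids this entirely by running a self-contained small-ball argument in the style of Moret--Nualart and Dalang--Nualart: bound $\det\Gamma^{i,i}\geq\inf_{\|\xi\|=1}(\xi'\Gamma^{i,i}\xi)^d$, isolate the leading term $G_{t_i-r}(x,v)\sigma_{k,l}(u(r,v))$ coming from \eqref{first derivative SHE}, estimate the remainder $I_h$ via Burkholder's inequality and Lemma \ref{Morien}, then use Chebyshev and integrate the small-ball probability. That approach is more elementary, does not require any secondary partitioning of $[t_{i-1},t_i]$, and gives constants visibly uniform in the partition. If you want to rescue the Kohatsu-Higa route you would have to reprove the analogue of DKN Lemma 5.1(ii) for the full solution increment over an arbitrary sub-interval with explicit constant tracking, which is roughly as much work as the small-ball argument the paper actually performs.
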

\begin{proof}
  By \eqref{det malliavin matrix} and H\"{o}lder’s inequality for the Malliavin norms (cf. \cite[Proposition 1.10]{Watanabe}), we get
  $$\|\left(\det \Gamma_{Z\,,\,\pi_n}\right)^{-1}\|_{k,p}\leq \prod_{i=1}^{n}\|(\det \Gamma^{i,i})^{-1}\|_{k,2^{n-1}p}.$$
  Let $\tilde{p}=2^{n-1}p$. By definition, we have
  \begin{equation}\label{def norm Malliavn Gamma ii}
    \begin{split}
       \|(\det \Gamma^{i,i})^{-1}\|_{k,\tilde{p}} &= \left\{\vphantom{\frac{1}{2}}\E\left[\left|(\det \Gamma^{i,i})^{-1}\right|^{\tilde{p}}\right]\right.\\
       &\left.\qquad+\sum_{l=1}^{k}\E\left[\left\|D^l(\det \Gamma^{i,i})^{-1}\right\|^{\tilde{p}}_{\HH^{\otimes l}}\right]\right\}^{\frac{1}{\tilde{p}}}.
    \end{split}
  \end{equation}
  To estimate the moments of the inverse of the determinant of the matrix $\Gamma^{i,i}$, we use standard arguments. We follow \cite{MoretNualart}, Lemma 10, and the proof of (4.14) in \cite{DalangNualart}. We have the following lower bound for the determinant
  \begin{align}\label{minoration det}
    \det \Gamma^{i,i} &\geq \inf_{\|\xi\|=1}\left(\xi'\,\Gamma^{i,i} \,\xi\right)^d \nonumber\\
    &= \inf_{\|\xi\|=1}\left(\sum_{l=1}^{d}\int_{t_{i-1}}^{t_i}\int_{0}^{1}\left|\sum_{k=1}^{d}D^{(l)}_{r,v}\left(u_k(t_i,x)-u_k(t_{i-1},x)\right)\xi_k\right|^2 dv\,dr\right)^d.
  \end{align}
  Using \eqref{first derivative SHE}, \eqref{akirvtx}, and \eqref{derivative order 1 equal 0}, we get for all $x\in (0,1)$, $t_{i-1}<r<t_i$,
  \begin{equation}\label{derivative of u moins u}
       D^{(l)}_{r,v}\left(u_k(t_i,x)-u_k(t_{i-1},x)\right) = G_{t_i-r}(x,v)\sigma_{k,l}(u(r,v))+a_k(l,r,v,t_i,x),\\
  \end{equation}
  where $a_k(l,r,v,t_i,x)$ is given by \eqref{akirvtx}. According to \eqref{derivative of u moins u} and \normalfont{\textbf{A2}}, for any $h\in (0,1]$, we obtain that the expression in parentheses in \eqref{minoration det} is bounded below by
  \begin{align*}
    &\sum_{l=1}^{d}\int_{t_i-h(t_i-t_{i-1})}^{t_i}\int_{0}^{1}\left|\sum_{k=1}^{d}\xi_k\left(G_{t_i-r}(x,v)\sigma_{k,l}(u(r,v))+a_k(l,r,v,t_i,x)\right)\right|^2 dv\,dr \\
     &\geq \frac{\rho^2}{2}\int_{t_i-h(t_i-t_{i-1})}^{t_i}dr\int_{0}^{1}dv\,G^2_{t_i-r}(x,v)\\
     &\qquad\qquad\qquad\qquad\qquad-\sum_{l=1}^{d}\int_{t_i-h(t_i-t_{i-1})}^{t_i}dr\int_{0}^{1}dv\left|\sum_{k=1}^{d}\xi_k\, a_k(l,r,v,t_i,x)\right|^2\\
     &\geq c\,\frac{\rho^2}{2}\sqrt{h(t_i-t_{i-1})}-I_h,
  \end{align*}
where
\begin{equation}\label{Ih}
  I_h=\sup_{\|\xi\|=1}\sum_{l=1}^{d}\int_{t_i-h(t_i-t_{i-1})}^{t_i}dr\int_{0}^{1}dv\left|\sum_{k=1}^{d}\xi_k\, a_k(l,r,v,t_i,x)\right|^2.
\end{equation}

We choose $y$ such that $c\,\rho^2\sqrt{h(t_i-t_{i-1})}=4y^{-1/d}$, and point out that since $h\leq 1$, we have $y\geq a:= 4^d c^{-d}\rho^{-2d}(t_i-t_{i-1})^{-d/2}$. Furthermore, as $h$ varies in $(0,1]$, $y$ varies in $[a,\infty)$. By Chebyshev’s inequality, we get that for any $q\geq 1$,
\begin{align*}
  \mathbb{P}\left[\det \Gamma^{i,i}<\frac{1}{y}\right] &\leq  \mathbb{P}\left[\left(c\,\frac{\rho^2}{2}\sqrt{h(t_i-t_{i-1})}-I_h\right)^d<\frac{1}{y}\right] \\
   &= \mathbb{P}\left[I_h>y^{-1/d}\right] \leq y^{q/d}\E[|I_h|^q].
\end{align*}
 Using \eqref{akirvtx} and standard arguments, we find
 $$\E[|I_h|^q]\leq K(\E[|R_1|^q]+\E[|R_2|^q]), $$
 where
  \begin{equation*}
        R_1=\sum_{l,k,j=1}^{d}\int_{t_i-h(t_i-t_{i-1})}^{t_i}dr\int_{0}^{1}dv \,\Lambda_1^2\;\;\text{and}\;\; R_2=\sum_{l,k=1}^{d}\int_{t_i-h(t_i-t_{i-1})}^{t_i}dr\int_{0}^{1}dv \,\Lambda_2^2\,,\\
  \end{equation*}
  with
  $$\Lambda_1=\int_{r}^{t_i}\int_{0}^{1}G_{t_i-\tau}(x,z)D^{(l)}_{r,v}(\sigma_{k,j}(u(\tau,z)))W^j(d\tau,dz),$$
  and
  $$\Lambda_2=\int_{r}^{t_i}\int_{0}^{1}G_{t_i-\tau}(x,z)D^{(l)}_{r,v}(b_k(u(\tau,z)))dz\,d\tau.$$
   We bound the $q$-th moment of $R_1$ and $R_2$ separately. Concerning $R_1$, we utilize Burkholder’s inequality for Hilbert space valued martingales \cite[Eq.(4.18)]{BallyPardoux} to get
   \begin{equation}\label{R1 momenant inequality}
   \begin{split}
      &\E[|R_1|^q]\leq  \\
      & K\sum_{l,k,j=1}^{d}\E\left[\left|\int_{t_i-h(t_i-t_{i-1})}^{t_i}d\tau\int_{0}^{1}dz\,G^2_{t_i-\tau}(x,z)\int_{t_i-h(t_i-t_{i-1})}^{\tau}dr\int_{0}^{1}dv\,\Theta^2\right|^q\right],
   \end{split}
   \end{equation}
  where
  \begin{align*}
    \Theta := |D^{(l)}_{r,v}\sigma_{k,j}(u(\tau,z))| &=\left|\sum_{m=1}^{d}\frac{\partial\sigma_{k,j}}{\partial x_m}(u(\tau,z))D^{(l)}_{r,v}u_m(\tau,z)\right|\\
    &\leq K  \sum_{m=1}^{d} |D^{(l)}_{r,v}u_m(\tau,z)|,
  \end{align*}
  thanks to the hypothesis \normalfont{\textbf{A1}}. Therefore,
  \begin{equation}\label{R1 momenant inequality Phi}
   \begin{split}
      &\E[|R_1|^q]\leq  \\
      & K\sum_{l=1}^{d}\E\left[\left|\int_{t_i-h(t_i-t_{i-1})}^{t_i}d\tau\int_{0}^{1}dz\,G^2_{t_i-\tau}(x,z)\int_{t_i-h(t_i-t_{i-1})}^{\tau}dr\int_{0}^{1}dv\,\Psi^2\right|^q\right],
   \end{split}
   \end{equation}
   where $\Psi=\sum_{m=1}^{d} |D^{(l)}_{r,v}u_m(\tau,z)|$. Now we use H\"{o}lder’s inequality w.r.t. the measure $G^2_{t_i-\tau}(x,z)d\tau dz$ to obtain that
   \begin{equation*}
     \begin{split}
        \E[|R_1|^q]&\leq  K \left|\int_{t_i-h(t_i-t_{i-1})}^{t_i}d\tau\int_{0}^{1}dz\,G^2_{t_i-\tau}(x,z)\right|^{q-1} \\
          & \cdot \int_{t_i-h(t_i-t_{i-1})}^{t_i}d\tau\int_{0}^{1}dz\,G^2_{t_i-\tau}(x,z)\sum_{l=1}^{d}\E\left[\left|\int_{t_i-h(t_i-t_{i-1})}^{t_i}dr\int_{0}^{1}dv\,\Psi^2\right|^q\right].
     \end{split}
   \end{equation*}
   According to Lemma \ref{Morien} and the properties of $G$, we have
   \begin{align*}
   \E[|R_1|^q] &\leq K (h(t_i-t_{i-1}))^{\frac{q-1}{2}}(h(t_i-t_{i-1}))^{q/2}\int_{t_i-h(t_i-t_{i-1})}^{t_i}d\tau\int_{0}^{1}dz\,G^2_{t_i-\tau}(x,z)\\
   &\leq K (h(t_i-t_{i-1}))^q.
   \end{align*}

   As regards $R_2$, we derive a similar bound. By the Cauchy–Schwarz inequality,
   $$\E[|R_2|^q]\leq K(h(t_i-t_{i-1}))^q \sum_{l,k=1}^{d}\E\left[\left|\int_{t_i-h(t_i-t_{i-1})}^{t_i}dr\int_{0}^{1}dv\int_{r}^{t_i}d\tau\int_{0}^{1}dz\,\Phi^2\right|^q\right],$$
   where $\Phi=G_{t_i-\tau}(x,z)|D^{(l)}_{r,v}(b_k(u(\tau,z)))|$.  From now on, the $q$-th moment of $R_2$ is estimated as that of $R_1$ (see \eqref{R1 momenant inequality}), and this yields
$$\E[|R_2|^q]\leq K(h(t_i-t_{i-1}))^{2q}\,.$$

Hence, we have shown that
\begin{equation*}
  \E[|I_h|^q] \leq K (h(t_i-t_{i-1}))^{q}=K\,\frac{4^{2q}}{c^{2q}\,\rho^{4q}}\,y^{-2q/d}.
\end{equation*}
Consequently, taking $q>{\tilde{p}}d$,
\begin{align*}
   &\E\left[\left|(\det \Gamma^{i,i})^{-1}\right|^{{\tilde{p}}}\right] \\
   &=\int_{0}^{\infty}{\tilde{p}}\,y^{{\tilde{p}}-1}\mathbb{P}\left[(\det \Gamma^{i,i})^{-1}>y\right]dy\\
   &\leq a^{{\tilde{p}}}+{\tilde{p}}\int_{a}^{\infty}y^{{\tilde{p}}-1}\mathbb{P}\left[\det \Gamma^{i,i}<\frac{1}{y}\right]dy\\
   &\leq \frac{4^{{\tilde{p}}d}}{c^{{\tilde{p}}d}\,\rho^{2{\tilde{p}}d}(t_i-t_{i-1})^{{\tilde{p}}d/2}}+{\tilde{p}}\int_{a}^{\infty}y^{{\tilde{p}}-1+(q/d)}\E[|I_h|^q]dy \\
   &\leq \frac{4^{{\tilde{p}}d}}{c^{{\tilde{p}}d}\,\rho^{2{\tilde{p}}d}(t_i-t_{i-1})^{{\tilde{p}}d/2}}+{\tilde{p}}K\frac{4^{2q}}{c^{2q}\,\rho^{4q}}\int_{a}^{\infty}y^{{\tilde{p}}-1+(q/d)-2(q/d)}dy\\
   &\leq K' (t_i-t_{i-1})^{-{\tilde{p}}d/2},
\end{align*}
where $K'$ is a finite positive constant. Thus, we have proved that
\begin{equation}\label{det Gamma inverse s moment}
  \E\left[\left|(\det \Gamma^{i,i})^{-1}\right|^{{\tilde{p}}}\right]\leq K' (t_i-t_{i-1})^{-{\tilde{p}}d/2}.
\end{equation}

Now, turning to the second term in \eqref{def norm Malliavn Gamma ii}, we claim that for any $l=1,\cdots,k$
\begin{equation}\label{def norm Malliavn Gamma ii Dl}
       \E\left[\left\|D^l(\det \Gamma^{i,i})^{-1}\right\|^{{\tilde{p}}}_{\HH^{\otimes l}}\right]\leq K (t_i-t_{i-1})^{-{\tilde{p}}d/2},
  \end{equation}
for some positive finite constant $K$. Indeed, by iterating the equality (see \cite[Lemma 2.1.6]{Nualart})
$$D\left((\det \Gamma^{i,i})^{-1}\right)=-(\det \Gamma^{i,i})^{-2}D(\det \Gamma^{i,i}),$$
we get
\begin{equation*}
  \begin{split}
      &D^l\left((\det \Gamma^{i,i})^{-1}\right) \\
       &=\sum_{r=1}^{l}\sum_{l_1+\cdots+l_r=l \atop l_k\geq 1,\; k=1,\cdots,r}c_{r,l_1,\cdots,l_r}(\det \Gamma^{i,i})^{-(r+1)}D^{l_1}(\det \Gamma^{i,i})\otimes\cdots\otimes D^{l_r}(\det \Gamma^{i,i}).
  \end{split}
\end{equation*}
Using H\"{o}lder’s inequality, we obtain
\begin{equation}\label{Dl det Gamm moins 1 global}
  \begin{split}
     &\E\left[\left\|D^l(\det \Gamma^{i,i})^{-1}\right\|^{{\tilde{p}}}_{\HH^{\otimes l}}\right] \leq K  \sum_{r=1}^{l}\sum_{l_1+\cdots+l_r=l \atop l_k\geq 1,\; k=1,\cdots,r} \E\left[\left|(\det \Gamma^{i,i})^{-1}\right|^{{\tilde{p}}(r+1)^2}\right]^{1/(r+1)} \\
       & \times \E\left[\left\|D^{l_1}(\det \Gamma^{i,i})\right\|^{{\tilde{p}}(r+1)}_{\HH^{\otimes l_1}}\right]^{1/(r+1)}\times\cdots\times\E\left[\left\|D^{l_r}(\det \Gamma^{i,i})\right\|^{{\tilde{p}}(r+1)}_{\HH^{\otimes l_r}}\right]^{1/(r+1)}.
  \end{split}
\end{equation}

According to \eqref{det Gamma inverse s moment}, we have
\begin{equation}\label{det Gamm moins 1}
  \E\left[\left|(\det \Gamma^{i,i})^{-1}\right|^{{\tilde{p}}(r+1)^2}\right]^{1/(r+1)}\leq K (t_i-t_{i-1})^{-{\tilde{p}}(r+1)d/2},
\end{equation}
for some constant $K>0$.

For the other factors, we write
$$\det \Gamma^{i,i}= \sum_{\eta \in \Pi}\varepsilon(\eta) \prod_{k=1}^{d}\Gamma^{i,i}_{k,\eta(k)},$$
where $\Pi=\{\eta\,; \eta \text{ permutation of }\{1,\cdots,d\}\}$. Therefore
\begin{align*}
 &\E\left[\left\|D^{l}(\det \Gamma^{i,i})\right\|^{{\tilde{p}}}_{\HH^{\otimes l}}\right]  \\
 &\leq K  \sum_{\eta\in \Pi} \sum_{l_1+\cdots+l_d=l \atop l_r\geq 0,\; r=1,\cdots,d}\E\left[\left\|D^{l_1} (\Gamma^{i,i}_{1,\eta(1)})\right\|^{d{\tilde{p}}}_{\HH^{\otimes l_1}}\right]^{1/d}\times\cdots\\ &\qquad\qquad\qquad\qquad\qquad\qquad\times\E\left[\left\|D^{l_d}(\Gamma^{i,i}_{d,\eta(d)})\right\|^{d{\tilde{p}}}_{\HH^{\otimes l_d}}\right]^{1/d}.
\end{align*}
Combining \eqref{Gamma i j k l}, Lemma \ref{inner product leq norms}, and Proposition \ref{Malliavin norm increment SHE}, we get
\begin{equation*}
  \E\left[\left\|D^{l}(\det \Gamma^{i,i})\right\|^{{\tilde{p}}}_{\HH^{\otimes l}}\right]\leq K (t_i-t_{i-1})^{{\tilde{p}}d/2}.
\end{equation*}
Then
\begin{equation}\label{Dl inferieur}
  \E\left[\left\|D^{l}(\det \Gamma^{i,i})\right\|^{{\tilde{p}}(r+1)}_{\HH^{\otimes l}}\right]^{1/(r+1)}\leq K (t_i-t_{i-1})^{{\tilde{p}}d/2},
\end{equation}
where $K$ is a positive constant.

By \eqref{Dl det Gamm moins 1 global}, \eqref{det Gamm moins 1}, and \eqref{Dl inferieur}, we obtain \eqref{def norm Malliavn Gamma ii Dl}. Finally, substituting \eqref{det Gamma inverse s moment} and \eqref{def norm Malliavn Gamma ii Dl} into \eqref{def norm Malliavn Gamma ii}, we conclude the proof of Theorem \ref{thm estimation det moins 1}.
\qed\end{proof}
\begin{lem}\label{lem H t1 tn inequality for Z}
  Assume \normalfont{\textbf{A1}} and \normalfont{\textbf{A2}}. Let $\pi_n=(t_1,,\cdots,t_n)$ with $0=t_0<t_1<\cdots<t_n\leq T$, $x\in (0,1)$, $Z$ given by \eqref{Z}, and $\beta=(\beta_1,\cdots,\beta_m)$ with $\beta_{\theta}=(i_{\theta},k_{\theta})\in \{1,\cdots,n\}\times\{1,\cdots,d\}$, for $\theta=1,\cdots,m$, then there exists a constant $C>0$ such that
  \begin{equation}\label{H t1 tn inequality for Z}
       \left\|H^{\beta}_{\pi_n}(Z\,,\,1)\right\|_{0,2} \leq C\, \prod_{\theta=1}^{m} (t_{i_{\theta}}-t_{i_{\theta}-1})^{-1/4}.
  \end{equation}
\end{lem}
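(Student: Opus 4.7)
The plan is to apply Lemma \ref{H estimation} with $F^{\pi_n}=Z$, where $Z_{i,k}=u_k(t_i,x)-u_k(t_{i-1},x)$, and then feed in the two estimates already available, namely Theorem \ref{thm estimation det moins 1} for the inverse of the determinant and Proposition \ref{Malliavin norm increment SHE} for the Malliavin norms of the increments. The $\pi_n$-nondegeneracy required to invoke Lemma \ref{H estimation} is furnished by Proposition \ref{prop derivatives SHE} together with Theorem \ref{thm estimation det moins 1}, so the hypotheses are in place.

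First, Lemma \ref{H estimation} gives
$$\left\|H^{\beta}_{\pi_n}(Z,1)\right\|_{0,2}\leq C\,\left\|(\det\Gamma_{Z,\pi_n})^{-1}\right\|_{m,2^{m+2}}^{m}\prod_{\theta=1}^{m}\left\|DZ_{i_\theta,k_\theta}\right\|_{m,q,\HH}\prod_{(i_0,k_0)\in O_{(i_\theta,k_\theta)}}\left\|DZ_{i_0,k_0}\right\|_{m,q,\HH}^{2},$$
with $q=2^{2(m+nd)}$. Next, Theorem \ref{thm estimation det moins 1} bounds the determinant factor by $K\prod_{i=1}^{n}(t_i-t_{i-1})^{-md/2}$. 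For the remaining factors, Proposition \ref{Malliavin norm increment SHE} (which controls every $\E[\|D^j Z_{i,k}\|_{\HH^{\otimes j}}^{p}]$ for $j\ge 1$ by $(t_i-t_{i-1})^{p/4}$, up to constants) yields
$$\left\|DZ_{i,k}\right\|_{m,q,\HH}\leq C\,(t_i-t_{i-1})^{1/4}.$$

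All that remains is a bookkeeping exercise on the exponent attached to each $(t_i-t_{i-1})$. For fixed $\theta$, the $\theta$-th block contributes the factor $(t_{i_\theta}-t_{i_\theta-1})^{1/4}$ once, and for every other pair $(i_0,k_0)\neq(i_\theta,k_\theta)$ the factor $(t_{i_0}-t_{i_0-1})^{1/4}$ appears twice. Consequently, the $\theta$-th block contributes to $(t_i-t_{i-1})$ the total exponent $(2d-1)/4$ if $i=i_\theta$ and $d/2$ if $i\neq i_\theta$. Letting $n_i=\#\{\theta:i_\theta=i\}$ and summing over $\theta$, the total exponent of $(t_i-t_{i-1})$ coming from the derivative factors is
$$n_i\,\frac{2d-1}{4}+(m-n_i)\,\frac{d}{2}=\frac{md}{2}-\frac{n_i}{4}.$$
Adding the exponent $-md/2$ from the determinant factor, the net exponent of $(t_i-t_{i-1})$ is $-n_i/4$, so
$$\left\|H^{\beta}_{\pi_n}(Z,1)\right\|_{0,2}\leq C\prod_{i=1}^{n}(t_i-t_{i-1})^{-n_i/4}=C\prod_{\theta=1}^{m}(t_{i_\theta}-t_{i_\theta-1})^{-1/4},$$
which is the desired inequality \eqref{H t1 tn inequality for Z}.

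There is really no hard analytic step once the preceding machinery is in hand; the main point is the \emph{exact} cancellation between the blow-up rate $-d/2$ per block of the inverse determinant and the regularity rate $+1/4$ per derivative factor. The only thing one has to be careful about is that the block-triangularity of $\Gamma_{Z,\pi_n}$ (Remark following \eqref{Gamma i j k l}) is exactly what makes Theorem \ref{thm estimation det moins 1} give the sharp, factorized bound $\prod_i(t_i-t_{i-1})^{-d/2}$; without it one would get a bound depending only on the global scale $t_n-t_0$, which would be far too crude to control the Malliavin weight block by block.
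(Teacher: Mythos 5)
Your proof is correct and follows exactly the route the paper takes: the paper states the lemma as an immediate consequence of Lemma \ref{H estimation}, Proposition \ref{Malliavin norm increment SHE}, and Theorem \ref{thm estimation det moins 1} without writing out the exponent bookkeeping, and you have simply carried out that bookkeeping explicitly, obtaining the expected exact cancellation $\frac{md}{2}-\frac{n_i}{4}-\frac{md}{2}=-\frac{n_i}{4}$ per block.
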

\begin{proof}
  By Lemma \ref{H estimation} we have
  \begin{equation}\label{H t1 tn inequality1}
    \begin{split}
       \left\|H^{\beta}_{\pi_n}(Z\,,\,1)\right\|_{0,2} & \leq C\,\left\|\left(\det{\Gamma_{Z\,,\,\pi_n}}\right)^{-1}\right\|^m_{m,2^{m+2}}  \\
         &\times \prod_{\theta=1}^{m}\left\|D(u_{k_{\theta}}(t_{i_{\theta}},x)-u_{k_{\theta}}(t_{i_{\theta}-1},x))\right\|_{m,2^{2(m+nd)},\HH}\\
         &\times\prod_{(i_0,k_0)\in O_{(i_{\theta},k_{\theta})}}\left\|D(u_{k_{0}}(t_{i_{0}},x)-u_{k_{0}}(t_{i_{0}-1},x))\right\|^2_{m,2^{2(m+nd)},\HH},
    \end{split}
  \end{equation}
  where $O_{(i_{\theta},k_{\theta})}$ is given in Lemma \ref{H estimation}. Hence, according to Proposition \ref{Malliavin norm increment SHE} and Theorem \ref{thm estimation det moins 1}, we get
  \begin{equation}\label{H t1 tn inequality12}
    \begin{split}
       \left\|H^{\beta}_{\pi_n}(Z\,,\,1)\right\|_{0,2} & \leq \tilde{C} \, \prod_{j=1}^{n}(t_j-t_{j-1})^{-md/2} \prod_{\theta=1}^{m}(t_{i_{\theta}}-t_{i_{\theta}-1})^{1/4}\\
         &\qquad\qquad\times\prod_{(i_0,k_0)\in O_{(i_{\theta},k_{\theta})}}(t_{i_{0}}-t_{i_{0}-1})^{1/2}\\
         &= \tilde{C} \, \prod_{j=1}^{n}(t_j-t_{j-1})^{-md/2} \prod_{\theta=1}^{m}(t_{i_{\theta}}-t_{i_{\theta}-1})^{-1/4}(t_{i_{\theta}}-t_{i_{\theta}-1})^{1/2}\\
         &\qquad\qquad\times\prod_{(i_0,k_0)\in O_{(i_{\theta},k_{\theta})}}(t_{i_{0}}-t_{i_{0}-1})^{1/2}\\
         &= \tilde{C} \, \prod_{j=1}^{n}(t_j-t_{j-1})^{-md/2} \prod_{\theta=1}^{m}(t_{i_{\theta}}-t_{i_{\theta}-1})^{-1/4}\\
         &\qquad\qquad\times\prod_{\theta=1}^{m}(t_{i_{\theta}}-t_{i_{\theta}-1})^{1/2}\prod_{(i_0,k_0)\in O_{(i_{\theta},k_{\theta})}}(t_{i_{0}}-t_{i_{0}-1})^{1/2}.
    \end{split}
  \end{equation}
We remark that $(t_{i_{\theta}}-t_{i_{\theta}-1})^{1/2}\prod_{(i_0,k_0)\in O_{(i_{\theta},k_{\theta})}}(t_{i_{0}}-t_{i_{0}-1})^{1/2}$ is equal to
$\prod_{j=1}^{n}\prod_{k=1}^{d}(t_j-t_{j-1})^{1/2}=\prod_{j=1}^{n}(t_j-t_{j-1})^{d/2}$. Therefore,
\begin{equation}\label{eq H}
  \prod_{\theta=1}^{m}(t_{i_{\theta}}-t_{i_{\theta}-1})^{1/2}\prod_{(i_0,k_0)\in O_{(i_{\theta},k_{\theta})}}(t_{i_{0}}-t_{i_{0}-1})^{1/2}= \prod_{j=1}^{n}(t_j-t_{j-1})^{md/2}.
\end{equation}
Hence, by combining \eqref{H t1 tn inequality12} and   \eqref{eq H}, we conclude the proof of Lemma \ref{lem H t1 tn inequality for Z}.
\qed\end{proof}

\begin{proof}[Proof of Theorem  \ref{estimation density}\textbf{(b)}]
  By Theorem \ref{existence of density general}, Lemma \ref{lem H t1 tn inequality for Z}, and \cite[Ineq. (6.2)]{DalangKhoshnevisanNualartmultiplicative}, we conclude the proof.
\qed\end{proof}
\section{Proof of Theorem \ref{local time SHE}}\label{4}
In this section, we will investigate the existence of the local time and its joint continuous version for the process $\{u(t,x)\,,\;t\in [0,T]\}$.
\subsection{Existence of local time when $d\leq 3$}
Let $\alpha\geq 0$, we define the Sobolev space $H^{\alpha}(\R^d)$ as:
$$H^{\alpha}(\R^d)=\left\{g\in L^2(\R^d)\,;\;(1+\|\xi\|^2)^{\frac{\alpha}{2}}\hat{g}\in L^2(\R^d)\right\},$$
where $\|\cdot\|$ is the Euclidean norm on  $\R^d$ and $\hat{g}$ is the Fourier transform of $g$.

Now we give our result concerning the existence of local time of the solution to Eq. \eqref{SHE}.
\begin{thm}\label{thm existence of local time}
  Let $u(t,x)$ be given by \eqref{solution SHE}. Assume that $d\leq 3$, then for each $x\in (0,1)$, the process $\{u(t,x)\,,\;t\in[0,T]\}$ has a local time $L(\xi,t)$. Moreover, for every fixed $t$, $L(\bullet,t)\in H^{\alpha}(\R^d)$ for $\alpha<\frac{4-d}{2}$.
\end{thm}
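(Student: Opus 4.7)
The plan is to apply Berman's $L^2$-local-time theorem together with a Parseval identity for the Sobolev norm, both of which reduce to a single scale-invariant decay estimate on the characteristic function of the increment $u(t,x)-u(s,x)$.

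The first step is to extract such an estimate from Lemma \ref{lem H t1 tn inequality for Z}. Applied with $n=1$, $\pi_1=\{t\}$, $Z=u(t,x)-u(s,x)$, and $\beta=((1,l_1),\dots,(1,l_m))$, the integration by parts formula together with Cauchy--Schwarz yields
$$\prod_{l=1}^{d}|\xi_l|^{k_l}\,\bigl|\E[e^{i\langle\xi,u(t,x)-u(s,x)\rangle}]\bigr|\leq C\,(t-s)^{-(k_1+\cdots+k_d)/4}$$
for every tuple of nonnegative integers $(k_1,\dots,k_d)$. Expanding $\|\xi\|^{2k}$ by the multinomial formula as a sum of monomials $\xi_1^{2k_1}\cdots\xi_d^{2k_d}$ with $k_1+\cdots+k_d=k$, this sharpens to the scalar bound
$$\bigl|\E[e^{i\langle\xi,u(t,x)-u(s,x)\rangle}]\bigr|\leq C_k\,\bigl(1+\|\xi\|(t-s)^{1/4}\bigr)^{-2k}$$
valid for every integer $k\geq 0$. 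This is the $\frac14$-LND-type estimate that replaces the Gaussian local nondeterminism used in the classical theory.

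To get existence, I would verify hypothesis \textbf{B1}. The change of variable $\eta=(t-s)^{1/4}\xi$ gives, for any $k$ with $2k>d$,
$$\int_{\R^d}\bigl|\E[e^{i\langle\xi,u(t,x)-u(s,x)\rangle}]\bigr|\,d\xi\leq C_k\,(t-s)^{-d/4}\int_{\R^d}(1+\|\eta\|)^{-2k}d\eta$$
which is finite. Integrating in $s,t\in[0,T]$ and using $d\leq 3$ (so $d/4<1$), \textbf{B1} holds; Berman's theorem then delivers a square-integrable local time with the Fourier representation
$$\widehat{L(\bullet,t)}(\xi)=\int_{0}^{t}e^{i\langle\xi,u(s,x)\rangle}ds.$$
For the Sobolev statement, by Parseval and Fubini,
$$\E\bigl[\|L(\bullet,t)\|_{H^{\alpha}}^{2}\bigr]=\int_{\R^{d}}(1+\|\xi\|^{2})^{\alpha}\int_{0}^{t}\!\int_{0}^{t}\E[e^{i\langle\xi,u(t_1,x)-u(t_2,x)\rangle}]\,dt_1dt_2\,d\xi.$$
For $t_1\neq t_2$ I would split the inner $\xi$-integral at $\|\xi\|=|t_1-t_2|^{-1/4}$: on the low-frequency ball the crude bound $|\E[\cdot]|\leq 1$ together with the volume estimate contributes $|t_1-t_2|^{-(d+2\alpha)/4}$, and on the complement the bound above with $2k>d+2\alpha$ produces the same order. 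Hence the double time integral is controlled by $\int_0^t\int_0^t|t_1-t_2|^{-(d+2\alpha)/4}dt_1dt_2$, which is finite iff $(d+2\alpha)/4<1$, i.e.\ $\alpha<(4-d)/2$.

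The main technical difficulty has already been absorbed into Lemma \ref{lem H t1 tn inequality for Z} and Theorem \ref{estimation density}: once the scale-invariant characteristic-function bound is in hand, the rest is a standard dyadic split--and--rescale computation, and the threshold $\alpha<(4-d)/2$ arises naturally from balancing $\|\xi\|$ decay against the number $d$ of spatial directions with the $\frac14$-order temporal scaling.
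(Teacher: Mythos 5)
Your proof is correct and reaches the stated conclusion, but your $\xi$-decomposition is isotropic whereas the paper's is coordinate-wise. Both arguments rest on the same integration-by-parts estimate (the paper's \eqref{alpha LND for 1}, a special case of its $\frac14$-LND theorem): $\prod_{l=1}^{d}|\xi_l|^{k_l}\,|\E[e^{i\langle\xi,u(t,x)-u(s,x)\rangle}]|\leq C\,(t-s)^{-\sum_l k_l/4}$. You then package this, via the multinomial theorem and the trivial bound $|\E[\cdot]|\leq 1$, into the single radial estimate $(1+\|\xi\|(t-s)^{1/4})^{-2k}$, after which the Sobolev computation is a clean rescale-and-split with one threshold $\|\xi\|=|t_1-t_2|^{-1/4}$; the paper instead partitions $\R^d$ into $2^d$ boxes $I_{i_1}\times\cdots\times I_{i_d}$ depending on whether each $|\xi_l|$ is below or above $(s-r)^{-1/4}$, choosing $k_l\in\{0,\,2([\alpha]+2)\}$ per coordinate, which produces the identical $|t_1-t_2|^{-(2\alpha+d)/4}$ time integrand without an isotropic intermediate bound. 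Your version is arguably cleaner, and the key balancing that yields $\alpha<(4-d)/2$ is the same in both. Two small remarks: (i) to obtain the increment $u(t,x)-u(s,x)$ from Lemma \ref{lem H t1 tn inequality for Z} you need $n=2$ with $\pi_2=(s,t)$ and the multi-index $\beta$ supported on the second block (equivalently, set $k_{1,l}=0$ in the $\frac14$-LND theorem), not $n=1$ --- with $n=1$, $\pi_1=(t)$, and $t_0=0$ the increment is just $u(t,x)$; (ii) the paper bypasses the separate invocation of Berman's \textbf{B1} by directly showing $\E[\int_{\R^d}\|\xi\|^{2\alpha}|f(\xi)|^2\,d\xi]<\infty$, which simultaneously gives existence and Sobolev regularity, though your sequencing (existence first, then Parseval) is equally valid.
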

\begin{proof}
  Let $t\in [0,T]$, and define $f$ by
  $$f(\xi)=\int_{0}^{t}e^{i\left<\xi,u(s,x)\right>}ds.$$
  Note that $f$ coincides with the Fourier transform of the local time ,$L(\bullet,t)$, whenever $L(\bullet,t)$ exists.
  Since $f$ is a continuous function, then we have just to look for $\alpha$ such that
  $$\int_{\R^d}\|\xi\|^{2\alpha}|f(\xi)|^2d\xi<\infty.$$
  We have by Fubini's theorem
  \begin{align*}
    \E\left[\int_{\R^d}\|\xi\|^{2\alpha}|f(\xi)|^2d\xi\right] &= \int_{\R^d}\|\xi\|^{2\alpha}\int_{[0,t]^2}\E\left[e^{i\left<\xi,u(s,x)-u(r,x)\right>}\right]dr\,ds\, d\xi \\
    &\leq \int_{[0,t]^2}\int_{\R^d}\|\xi\|^{2\alpha}\left|\E\left[e^{i\left<\xi,u(s,x)-u(r,x)\right>}\right]\right| d\xi\,dr\,ds \\
    &= 2\int_{\{0\leq r<s\leq t\}} \int_{\R^d}\|\xi\|^{2\alpha}\left|\E\left[e^{i\left<\xi,u(s,x)-u(r,x)\right>}\right]\right| d\xi\,dr\,ds.
  \end{align*}
Put $I_1=[-1/(s-r)^{1/4},1/(s-r)^{1/4}]$ and $I_2=\R\setminus I_1$, Therefore
$$\R^d=\bigcup_{i_1,\cdots,i_d\in \{1,2\}}I_{i_1}\times\cdots\times I_{i_d}.$$
Hence
\begin{equation}\label{E 1 plus xi s }
\begin{split}
  &\E\left[\int_{\R^d}\|\xi\|^{2\alpha}|f(\xi)|^2d\xi\right]\\
  &\leq 2\sum_{i_1,\cdots,i_d\in \{1,2\}}\int_{\{0\leq r<s\leq t\}} \int_{I_{i_1}\times\cdots\times I_{i_d}} \|\xi\|^{2\alpha}\left|\E\left[e^{i\left<\xi,u(s,x)-u(r,x)\right>}\right]\right| d\xi\,dr\,ds.
  \end{split}
\end{equation}

Let $\varphi_{\xi}:\R^d\ni z\mapsto e^{i\left<\xi,z\right>}$, $k=(k_1,\ldots,k_d)$, and $\partial_{z}^{k}=\prod_{l=1}^{d}(\frac{\partial}{\partial z_l})^{k_l}$. By simple calculation, we have
\begin{equation}\label{carac fc derivatives inside}
  \E\left[(\partial_{z}^{k}\varphi_{\xi})(u(s,x)-u(r,x))\right]=(i\xi_1)^{k_1}\cdots(i\xi_d)^{k_d}\E\left[e^{i\left<\xi,u(s,x)-u(r,x)\right>}\right]
\end{equation}
On the other hand, by integration by parts  (i.e., Proposition \ref{prop integration by parts}), we get
 \begin{align}\label{carac fc int parts}
  \E\left[(\partial_{z}^{k}\varphi_{\xi})(u(s,x)-u(r,x))\right] &= \E\left[(\partial_{y_2}^{k}g)(u(r,x),u(s,x)-u(r,x))\right]\nonumber\\
  &= \E\left[e^{i\left<\xi,u(s,x)-u(r,x)\right>}H_{\pi_2}^{\beta}(F,1)\right],
\end{align}
where $\pi_2=(r,s)$, $F=(u(r,x),u(s,x)-u(r,x))$, $g:\R^d\times\R^d\ni(y_1,y_2)\mapsto\varphi_{\xi}(y_2)$, $y_i=(y_{i,1},\ldots,y_{i,d})$, for $i=1,2$, $\partial_{y_2}^{k}=\prod_{l=1}^{d}(\frac{\partial}{\partial y_{2,l}})^{k_l}$, $m=\sum_{l=1}^{d}k_l$, and
$$\beta=\left(\underbrace{(2,1),\cdots,(2,1)}_{k_1 \text{times}},\cdots,\underbrace{(2,d),\cdots,(2,d)}_{k_d \text{times}}\right).$$
Combining \eqref{carac fc derivatives inside} and \eqref{carac fc int parts}, we obtain
\begin{align}\label{cara fc est}
  \left|\E\left[e^{i\left<\xi,u(s,x)-u(r,x)\right>}\right]\right| &= |\xi_1|^{-k_1}\cdots|\xi_d|^{-k_d} \left|\E\left[e^{i\left<\xi,u(s,x)-u(r,x)\right>}H_{\pi_2}^{\beta}(F,1)\right]\right| \nonumber\\
   &\leq |\xi_1|^{-k_1}\cdots|\xi_d|^{-k_d} \E\left[\left|H_{\pi_2}^{\beta}(F,1)\right|\right].
\end{align}
According to \eqref{H t1 tn inequality for Z}, we write
\begin{equation}\label{estimatin H s r}
  \|H_{\pi_2}^{\beta}(F,1)\|_{0,2} \leq C\prod_{\theta=1}^{m}(s-r)^{-1/4}= C (s-r)^{-\sum_{l=1}^{d}k_l/4}.
\end{equation}
Therefore, according to \eqref{cara fc est} and \eqref{estimatin H s r}, we get for all positive integers $k_1,\cdots,k_d$, there exists a positive constant $C=C(k_1,\cdots,k_d)$, such that
\begin{equation}\label{alpha LND for 1}
  \left|\E\left[e^{i\left<\xi,u(s,x)-u(r,x)\right>}\right]\right|\leq \frac{C}{|\xi_1|^{k_1}\cdots|\xi_d|^{k_d} (s-r)^{\sum_{l=1}^{d}k_l/4}},
\end{equation}
here $\xi=(\xi_1,\cdots,\xi_d)$. Put, for $l=1,\cdots,d$,
$$k_l(i_l)=\left\{
  \begin{array}{ll}
    0, & \text{if}\quad i_l=1; \\
    2([\alpha]+2), & \text{if}\quad i_l=2,
  \end{array}
\right.$$
where $[\alpha]$ is the integer part of $\alpha$. According to \eqref{E 1 plus xi s } and \eqref{alpha LND for 1}, we get
\begin{align}\label{trop f}
  &\E\left[\int_{\R^d}\|\xi\|^{2\alpha}|f(\xi)|^2d\xi\right]\nonumber\\
  &\leq K_1\sum_{i_1,\cdots,i_d\in \{1,2\}}\int_{\{0\leq r<s\leq t\}} \int_{I_{i_1}\times\cdots\times I_{i_d}} \nonumber\\ &\qquad\qquad\qquad\qquad\qquad\qquad\frac{\|\xi\|^{2\alpha}}{|\xi_1|^{k_1(i_1)}\cdots|\xi_d|^{k_d(i_d)} (s-r)^{\sum_{l=1}^{d}k_l(i_l)/4}} d\xi\,dr\,ds\nonumber\\
  &\leq K_2\sum_{p=1}^{d}\sum_{i_1,\cdots,i_d\in \{1,2\}}\int_{\{0\leq r<s\leq t\}} \int_{I_{i_1}\times\cdots\times I_{i_d}} \nonumber\\ &\qquad\qquad\qquad\qquad\qquad\qquad\frac{|\xi_p|^{2\alpha}}{|\xi_1|^{k_1(i_1)}\cdots|\xi_d|^{k_d(i_d)} (s-r)^{\sum_{l=1}^{d}k_l(i_l)/4}} d\xi\,dr\,ds\nonumber\\
  &=K_2\sum_{p=1}^{d}\sum_{i_1,\cdots,i_d\in \{1,2\}}\int_{\{0\leq r<s\leq t\}} \prod_{l=1}^{p-1}\int_{I_{i_l}}\frac{1}{|\xi_l|^{k_l(i_l)}(s-r)^{k_l(i_l)/4}}d\xi_l\nonumber\\
  &\qquad\times \int_{I_{i_p}}\frac{|\xi_p|^{2\alpha}}{|\xi_p|^{k_p(i_p)}(s-r)^{k_p(i_p)/4}}d\xi_p\prod_{l=p+1}^{d}\int_{I_{i_l}}\frac{1}{|\xi_l|^{k_l(i_l)}(s-r)^{k_l(i_l)/4}}d\xi_l\,dr\,ds.
\end{align}
By simple calculation, we obtain
\begin{itemize}
  \item When $i_l=1\text{ or }2$ with $l\neq p$, we have $\int_{I_{i_l}}\frac{1}{|\xi_l|^{k_l(i_l)}(s-r)^{k_l(i_l)/4}}d\xi_l= \frac{c}{(s-r)^{1/4}}$, where $c$ is a positive constant depending on $i_l$ and $\alpha$ ;
  \item If $i_p=1\text{ or }2$, we get $\int_{I_{i_p}}\frac{|\xi_p|^{2\alpha}}{|\xi_p|^{k_p(i_p)}(s-r)^{k_p(i_p)/4}}d\xi_p=\frac{c'}{(s-r)^{(2\alpha+1)/4}}$, where $c'$ is a positive constant depending on $i_p$ and $\alpha$.
\end{itemize}
Combining the above discussion with \eqref{trop f}, we have
\begin{equation}\label{Ineq f s r}
    \E\left[\int_{\R^d}\|\xi\|^{2\alpha}|f(\xi)|^2d\xi\right] \leq K_3 \int_{\{0\leq r<s\leq t\}}\frac{1}{(s-r)^{(2\alpha+d)/4}}dr\,ds.
\end{equation}
Hence, the local time $L(\xi,t)$ exists for $d=1,2,3$ and $L(\bullet,t)\in H^{\alpha}(\R^d)$ for $\alpha<\frac{4-d}{2}$. Which finishes the proof of Theorem \ref{thm existence of local time}.
\qed\end{proof}
\subsection{The local time does not exist for $d\geq 4$}\label{42}
The below theorem is a classical result on the existence of local time for a stochastic process $X$ with values in $\R^d$.
\begin{thm}[Theorem 21.15 in \cite{GemanHorowitz}]\label{No LT}
  The local time, $L(\bullet,t)$, exists with $L(\bullet,t)\in L^2(\mathbb{P}\otimes \lambda_d)$ iff
  $$\liminf_{\varepsilon\to 0}\varepsilon^{-d}\int_{0}^{t}\int_{0}^{t}\mathbb{P}\left[\|X_s-X_r\|\leq \varepsilon\right]dr\,ds<\infty.$$
\end{thm}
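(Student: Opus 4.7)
The plan is to characterize the square-integrable local time through a second-moment computation for a mollified approximation, then match it to the criterion via two-sided bounds. Fix the symmetric mollifier $\phi_\varepsilon(y) = (c_d\varepsilon^d)^{-1}\mathbbm{1}_{\{\|y\|\leq\varepsilon\}}$, where $c_d$ is the volume of the Euclidean unit ball in $\R^d$, and set
$$L_\varepsilon(x,t) = \int_0^t \phi_\varepsilon(X_s - x)\,ds.$$
This is the natural candidate approximation to $L(\bullet,t)$. By Fubini and a change of variable,
$$\E\left[\int_{\R^d}|L_\varepsilon(x,t)|^2 dx\right] = \int_0^t\int_0^t \E\bigl[(\phi_\varepsilon *\phi_\varepsilon)(X_s - X_r)\bigr]\,dr\,ds.$$
Since $\phi_\varepsilon*\phi_\varepsilon$ is supported in $\{\|y\|\leq 2\varepsilon\}$, bounded above by a constant multiple of $\varepsilon^{-d}$, and bounded below by a constant multiple of $\varepsilon^{-d}\mathbbm{1}_{\{\|y\|\leq\varepsilon/2\}}$, writing $g(\varepsilon) = \int_0^t\int_0^t \mathbb{P}[\|X_s - X_r\|\leq\varepsilon]\,dr\,ds$, I obtain
$$c_1 \varepsilon^{-d} g(\varepsilon/2) \leq \E\bigl[\|L_\varepsilon(\bullet,t)\|_{L^2(\lambda_d)}^2\bigr] \leq c_2 \varepsilon^{-d} g(2\varepsilon).$$
Thus $\liminf_{\varepsilon\to 0}\varepsilon^{-d}g(\varepsilon) < \infty$ is equivalent to boundedness of $\|L_\varepsilon\|_{L^2(\mathbb{P}\otimes\lambda_d)}$ along some sequence $\varepsilon_n\downarrow 0$.

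For the necessity direction, if the local time $L(\bullet,t)$ exists and lies in $L^2(\mathbb{P}\otimes\lambda_d)$, the occupation formula yields (pathwise) the identity $L_\varepsilon(\bullet,t) = L(\bullet,t)*\phi_\varepsilon$. Young's inequality then gives $\|L_\varepsilon(\bullet,t)\|_{L^2(\lambda_d)}\leq \|L(\bullet,t)\|_{L^2(\lambda_d)}$; squaring and taking expectations, together with the lower bound above, forces $\liminf \varepsilon^{-d}g(\varepsilon)<\infty$.

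For the sufficiency direction, assume the liminf is finite and pick $\varepsilon_n\downarrow 0$ realizing the bound. Weak compactness in the Hilbert space $L^2(\mathbb{P}\otimes\lambda_d)$ produces a subsequential weak limit $L$. To identify $L$ with the Radon–Nikodym density of the occupation measure $\nu_t$, I would test against bounded measurable $f:\R^d\to\R$ using the pathwise identity
$$\int_{\R^d} f(x)\,L_{\varepsilon_n}(x,t)\,dx = \int_0^t (f*\phi_{\varepsilon_n})(X_s)\,ds \xrightarrow[n\to\infty]{} \int_0^t f(X_s)\,ds\qquad \text{a.s.}$$
for continuous $f$ (by dominated convergence along paths), then extend to bounded measurable $f$ by a monotone class argument. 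Pairing this with an arbitrary deterministic weight in $L^2(\mathbb{P}\otimes\lambda_d)$ and comparing with the weak limit yields $\int f(x)L(x,t)\,dx = \int_0^t f(X_s)\,ds$ a.s., so $L(\bullet,t)$ is indeed the local time and lies in $L^2$.

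The main obstacle I expect is upgrading the purely Hilbertian weak convergence of $L_{\varepsilon_n}$ to a statement that identifies the limit \emph{pathwise} with the occupation density: weak convergence in $L^2(\mathbb{P}\otimes\lambda_d)$ alone does not rule out a stochastic discrepancy, so one must combine it with the a.s.\ pathwise convergence $\int f\cdot L_{\varepsilon_n}\to\int_0^t f(X_s)\,ds$ and invoke a separability argument in $f$ (a countable dense family of continuous compactly supported test functions) to obtain a single null set that simultaneously handles every test function. Once this identification is in place, the rest of the argument reduces to the two-sided bound between $\E[\|L_\varepsilon\|_{L^2}^2]$ and $\varepsilon^{-d}g(\varepsilon)$ and routine limit passages.
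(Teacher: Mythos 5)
The paper does not prove this theorem at all: it is cited verbatim as Theorem 21.15 of Geman and Horowitz \cite{GemanHorowitz}, and used as a black box (only the "only if" direction, in contrapositive form, is actually needed in Section 4.2 to rule out the local time for $d\geq 4$). So there is no "paper proof" to compare against; instead I compare your argument against the standard Geman--Horowitz proof.

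Your proof is correct and is essentially the Geman--Horowitz mollification argument. The computation $\E\bigl[\|L_\varepsilon(\bullet,t)\|_{L^2(\lambda_d)}^2\bigr]=\int_0^t\int_0^t\E\bigl[(\phi_\varepsilon*\phi_\varepsilon)(X_s-X_r)\bigr]\,dr\,ds$ is right, and the support/size estimates on $\phi_\varepsilon*\phi_\varepsilon$ give exactly the sandwich $c_1\varepsilon^{-d}g(\varepsilon/2)\leq\E\bigl[\|L_\varepsilon\|^2\bigr]\leq c_2\varepsilon^{-d}g(2\varepsilon)$, so finiteness of $\liminf_\varepsilon\varepsilon^{-d}g(\varepsilon)$ is indeed equivalent to a bounded subsequence of $\|L_\varepsilon\|_{L^2(\mathbb{P}\otimes\lambda_d)}$. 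The necessity direction is clean: applying the occupation formula to $f=\phi_\varepsilon(\cdot-y)$ gives $L_\varepsilon(\cdot,t)=L(\cdot,t)*\phi_\varepsilon$ pathwise, and Young's inequality together with the lower sandwich bound closes it. For sufficiency you correctly flag the one delicate step, and your resolution is sound: $\int f L_{\varepsilon_n}\,dx$ is bounded in $L^2(\mathbb{P})$ (hence uniformly integrable), and it converges a.s.\ to $\int_0^t f(X_s)\,ds$ for continuous bounded $f$, so $\int f L_{\varepsilon_n}\,dx\to\int_0^t f(X_s)\,ds$ in $L^1(\mathbb{P})$; pairing with bounded random variables and comparing with the weak limit identifies $\int f(x)L(x,t)\,dx=\int_0^t f(X_s)\,ds$ a.s.\ for each $f$, and a countable dense family in $C_c(\R^d)$ plus a monotone-class step produces a single null set off which the weak limit is the occupation density. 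One small cosmetic point worth spelling out: the weak limit $L$ is a priori only in $L^2$, not obviously in $L^1$; but once $\int_A L\,dx=\nu_t(A)$ is established for all bounded Borel $A$, positivity and $\nu_t(\R^d)=t$ give $L\in L^1(\lambda_d)$ a.s.\ automatically, so $L$ really is the Radon--Nikodym density. Overall, a complete and correct blind reconstruction of a classical result that the paper merely cites.
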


Taking into account the above theorem, we are ready to give the following result.
\begin{thm}\label{LT does not exist for u}
  Let $u(t,x)$ be given by \eqref{solution SHE}. Assume that $d\geq 4$, then for each $x\in (0,1)$, the process $\{u(t,x)\,,\;t\in[0,T]\}$ does not have a local time $L(\xi,t)$ in $L^2(\mathbb{P}\otimes \lambda_d)$ for any $t\in [0,T]$.
\end{thm}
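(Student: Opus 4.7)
The plan is to apply Theorem \ref{No LT} (the Geman--Horowitz criterion) in its contrapositive form: it suffices to prove that
$$\liminf_{\varepsilon\to 0}\varepsilon^{-d}\int_{0}^{t}\int_{0}^{t}\mathbb{P}\bigl[\|u(s,x)-u(r,x)\|\leq \varepsilon\bigr]\,dr\,ds=+\infty$$
for every $t\in(0,T]$. The key input is the already established Gaussian-type lower bound on the density of $u(t,x)-u(s,x)$ from Theorem \ref{estimation density}\textbf{(a)}, which is what converts the problem into an elementary computation.

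First I would integrate the density lower bound over the ball of radius $\varepsilon$: for $0\le r<s\le t$,
$$\mathbb{P}\bigl[\|u(s,x)-u(r,x)\|\leq\varepsilon\bigr]\;\geq\;\int_{\{\|y\|\leq\varepsilon\}}\frac{c}{(s-r)^{d/4}}\exp\!\left(-\frac{\|y\|^2}{c(s-r)^{1/2}}\right)dy.$$
Next I would restrict attention to the regime $(s-r)^{1/2}\ge \varepsilon^2$, i.e.\ $s-r\ge\varepsilon^4$. On this set and for $\|y\|\le\varepsilon$, the exponent satisfies $\|y\|^2/(c(s-r)^{1/2})\leq 1/c$, so the exponential is bounded below by a positive constant. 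The volume of the ball contributes a factor $c_d\varepsilon^d$, giving
$$\mathbb{P}\bigl[\|u(s,x)-u(r,x)\|\leq\varepsilon\bigr]\;\geq\;\frac{C\,\varepsilon^d}{(s-r)^{d/4}}\qquad\text{whenever }\;s-r\geq\varepsilon^4.$$

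Multiplying by $\varepsilon^{-d}$ and integrating, it remains to examine
$$\varepsilon^{-d}\int_{0}^{t}\!\int_{0}^{t}\mathbb{P}\bigl[\|u(s,x)-u(r,x)\|\leq\varepsilon\bigr]\,dr\,ds\;\geq\;C\!\iint_{\{\varepsilon^4\leq|s-r|\leq t\}}\frac{dr\,ds}{|s-r|^{d/4}}.$$
By the change of variables $u=s-r$, the right-hand side is comparable to $\int_{\varepsilon^4}^{t}(t-u)\,u^{-d/4}\,du$. When $d=4$ the integrand is $\sim 1/u$ and the integral behaves like $\log(1/\varepsilon)\to\infty$; when $d\geq 5$, since $d/4>1$, the integral is of order $\varepsilon^{4-d}\to\infty$. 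In either case the liminf is infinite, and Theorem \ref{No LT} yields the nonexistence of a square-integrable local time.

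The only place that requires any genuine care is isolating the regime $s-r\geq\varepsilon^4$: one must choose the threshold large enough that the Gaussian factor $\exp(-\|y\|^2/c(s-r)^{1/2})$ is uniformly bounded below on $\{\|y\|\leq\varepsilon\}$, yet small enough that the remaining region $s-r\in[\varepsilon^4,t]$ still produces the divergent integral. The choice $s-r\geq\varepsilon^4/c^2$ (equivalently $(s-r)^{1/2}\geq\varepsilon^2/c$) does both. Beyond this optimization there is no real obstacle; the argument is sharp precisely because the lower bound in Theorem \ref{estimation density}\textbf{(a)} already captures the correct Gaussian scaling $(s-r)^{-d/4}$, which is the same scaling that governs the existence region $d\leq 3$ obtained in Theorem \ref{thm existence of local time}.
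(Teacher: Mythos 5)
Your proof is correct, and it hinges on exactly the same key input as the paper's: the Gaussian-type lower bound $p_{s,t,x}(y)\gtrsim (t-s)^{-d/4}\exp(-\|y\|^2/(c(t-s)^{1/2}))$ from Theorem~\ref{estimation density}\textbf{(a)}, fed into the Geman--Horowitz criterion Theorem~\ref{No LT}. The calculations after that, however, diverge in a way worth noting. The paper keeps the exponential factor alive, performs the rescaling $\tau=c^{-2}\|y\|^4 u$ to extract an explicit $\|y\|^{-(d-4)}$, and then treats $d\geq 5$ and $d=4$ as separate cases (the latter producing the logarithmic term directly from the $u$-integral). Your approach instead truncates the time integral at $|s-r|\geq \varepsilon^4$, which makes the exponential uniformly bounded below over $\{\|y\|\leq\varepsilon\}$ and absorbs the $\varepsilon^d$ from the ball volume; everything then collapses to the single elementary integral $\int_{\varepsilon^4}^{t}(t-u)\,u^{-d/4}\,du$, which diverges logarithmically for $d=4$ and like $\varepsilon^{4-d}$ for $d\geq 5$ without any case split in the setup. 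This is a genuinely cleaner route to the same conclusion: it trades the paper's extra change of variables and bifurcated argument for a single truncation, and it makes the scaling heuristic ($\varepsilon^4$ is the time scale at which the Gaussian kernel has width $\varepsilon$) more transparent. Both proofs are sharp for the same reason you identify, namely that $(s-r)^{-d/4}$ is precisely the scaling that governs the $d\leq 3$ existence computation in Theorem~\ref{thm existence of local time}.
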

\begin{proof}
  We have by \eqref{lower bound for the density} and Fubini's theorem
  \begin{align*}
     &\int_{0}^{t}\int_{0}^{t}\mathbb{P}\left[\|u(s,x)-u(r,x)\|\leq \varepsilon\right]dr\,ds \\
     &\geq c\int_{B(0,\varepsilon)} \int_{0}^{t}\int_{0}^{t}\frac{1}{|s-r|^{d/4}}\exp\left(-\frac{\|y\|^2}{c|s-r|^{1/2}}\right)dr\,ds\,dy. \\
  \end{align*}
  We now fix $s$ and use the change of variables $\tau=s-r$ to see that this above expression equal to
  \begin{align*}
     &c\int_{B(0,\varepsilon)} \int_{0}^{t}\int_{-t+s}^{s}\frac{\exp\left(-\frac{\|y\|^2}{c|\tau|^{1/2}}\right)}{|\tau|^{d/4}}d\tau\,ds\,dy. \\
  \end{align*}
Let $0<\alpha<t$, hence this above term is greater than or equal to
\begin{align*}
   &c\int_{B(0,\varepsilon)} \int_{t-\alpha}^{t}\int_{0}^{s}\frac{\exp\left(-\frac{\|y\|^2}{c\tau^{1/2}}\right)}{\tau^{d/4}}d\tau\,ds\,dy  \\
   &\geq c\int_{B(0,\varepsilon)} \int_{t-\alpha}^{t}\int_{0}^{t-\alpha}\frac{\exp\left(-\frac{\|y\|^2}{c\tau^{1/2}}\right)}{\tau^{d/4}}d\tau\,ds\,dy  \\
   &=c_1\int_{B(0,\varepsilon)}\int_{0}^{t-\alpha}\frac{\exp\left(-\frac{\|y\|^2}{c\tau^{1/2}}\right)}{\tau^{d/4}}d\tau\,dy.
\end{align*}
By the change of variables  $\tau=c^{-2}\|y\|^4u$ we see that this is greater than or equal to
\begin{equation}\label{y d mmoins 4}
  c_2\int_{B(0,\varepsilon)}\frac{1}{\|y\|^{d-4}}\int_{0}^{\frac{c^2(t-\alpha)}{\|y\|^4}}\frac{\exp\left(-\frac{1}{u^{1/2}}\right)}{u^{d/4}}du\,dy.
\end{equation}
\begin{itemize}
  \item If $d\geq 5$. Assume that $\varepsilon\in (0,1)$ and let $0<\beta<c^2(t-\alpha)$.  Then \eqref{y d mmoins 4} is greater than or equal to
  \begin{align*}
     c_2\int_{B(0,\varepsilon)}\frac{1}{\|y\|^{d-4}}\int_{\beta}^{c^2(t-\alpha)}\frac{\exp\left(-\frac{1}{u^{1/2}}\right)}{u^{d/4}}du\,dy &=c_3 \int_{B(0,\varepsilon)}\frac{1}{\|y\|^{d-4}}dy\geq c_3 \varepsilon^4. \\
  \end{align*}
  Therefore, we conclude that
  $$\liminf_{\varepsilon\to 0}\varepsilon^{-d}\int_{0}^{t}\int_{0}^{t}\mathbb{P}\left[\|X_s-X_r\|\leq \varepsilon\right]dr\,ds=\infty.$$
  Hence, by Theorem \ref{No LT} the local time does not exist for $d\geq 5$.
  \item If $d=4$. Assume that $\varepsilon\in (0,1)$ and let $0<\beta<c^2(t-\alpha)$.  Then \eqref{y d mmoins 4} is greater than or equal to
  \begin{align*}
     c_2\int_{B(0,\varepsilon)}\int_{\beta}^{\frac{c^2(t-\alpha)}{\|y\|^4}}\frac{\exp\left(-\frac{1}{u^{1/2}}\right)}{u}du\,dy& \geq c_2 e^{-\frac{1}{\beta^{1/2}}}\int_{B(0,\varepsilon)} \log\left(\frac{c^2(t-\alpha)}{\beta\|y\|^4}\right) dy  \\
     &\geq c_2e^{-\frac{1}{\beta^{1/2}}} \varepsilon^4 \log\left(\frac{c^2(t-\alpha)}{\beta\varepsilon^4}\right).
  \end{align*}
  Therefore
  $$\liminf_{\varepsilon\to 0}\varepsilon^{-4}\int_{0}^{t}\int_{0}^{t}\mathbb{P}\left[\|X_s-X_r\|\leq \varepsilon\right]dr\,ds=\infty.$$
  Then, by Theorem \ref{No LT} the local time does not exist for $d=4$. This concludes the proof of Theorem \ref{LT does not exist for u}.
\end{itemize}
\qed\end{proof}
\subsection{Regularity of local time}
Our goal in this section is to look for a version of the local time $L(\xi,t)$ with jointly
H\"{o}lder continuity in $(\xi,t)$. Moreover, we show that the local time satisfies a H\"{o}lder condition with respect to the time variable $t$, uniformly in the space variable $\xi$.  We start by proving the $\alpha$-LND property  for the process $\{u(t,x)\,,\;t\in[0,T]\}$.
\begin{thm}\label{thm alpha LND}
 Let $u(t,x)$ be given by \eqref{solution SHE}. Hence, for each fixed $x\in (0,1)$, the process $\{u(t,x)\,,\;t\in[0,T]\}$ verifies the $\frac{1}{4}$-LND property on $[0,T]$, i.e.,  for every nonnegative integers $m\geq 2$, $k_{j,l}$, for $j=1,\cdots,m$ and $l=1,\cdots,d$, there exists a constant $c=c(m,k_{j,l})$ such that
  \begin{equation}\label{Ineq alphha LND for u}
        \left|\E\left[e^{i\sum_{j=1}^{m}\left<v_j,u(t_j,x)-u(t_{j-1},x)\right>}\right]\right|
         \leq\frac{c}{\prod_{j=1}^{m}\prod_{l=1}^{d}|v_{j,l}|^{k_{j,l}}(t_j-t_{j-1})^{k_{j,l}/4}},
  \end{equation}
  for all $v_j=(v_{j,l}\,;\; 1\leq l\leq d)\in (\R\setminus\{0\})^d$, for $j=1,\cdots, m$, and for every ordered points $0=t_0<t_1<\cdots<t_m\leq T$.
\end{thm}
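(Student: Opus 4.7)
The plan is to deduce the $\tfrac{1}{4}$-LND property directly from the integration-by-parts machinery that has already been set up, namely Proposition \ref{prop integration by parts} together with the sharp norm estimate of Lemma \ref{lem H t1 tn inequality for Z}. The strategy is the one sketched in the introduction: we differentiate the exponential in $\xi$, pull out the monomial $\prod |v_{j,l}|^{k_{j,l}}$, and then transfer all the differentiation onto the Malliavin weight $H^{\beta}_{\pi_n}(Z,1)$.

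First I would fix ordered points $0=t_0<t_1<\cdots<t_m\le T$ and set $Z=(u(t_1,x)-u(t_0,x),\cdots,u(t_m,x)-u(t_{m-1},x))$ as in \eqref{Z}. I would verify that $Z$ is $\pi_m$-nondegenerate: the components lie in $\mathbb{D}^\infty$ by Proposition \ref{prop derivatives SHE}, while invertibility of $\Gamma_{Z,\pi_m}$ and integrability of $(\det \Gamma_{Z,\pi_m})^{-1}$ to all orders are exactly the content of Theorem \ref{thm estimation det moins 1} (which moreover is uniform in the partition in the precise sense we need). This places us in the hypotheses of Proposition \ref{prop integration by parts}.

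Next I would choose the multi-index $\beta$ so that differentiating $g(\xi)=e^{i\sum_j\langle v_j,\xi_j\rangle}$ with respect to it yields the monomial we want. Concretely, given nonnegative integers $k_{j,l}$, let $\beta=(\beta_1,\cdots,\beta_M)$ with $M=\sum_{j,l}k_{j,l}$ be the tuple in which each pair $(j,l)\in\{1,\cdots,m\}\times\{1,\cdots,d\}$ is repeated exactly $k_{j,l}$ times. Then $\partial_\beta g(\xi)=\bigl(\prod_{j,l}(iv_{j,l})^{k_{j,l}}\bigr)g(\xi)$, and applying \eqref{integration by parts} with $G=1$ yields
\begin{equation*}
\prod_{j=1}^{m}\prod_{l=1}^{d}(iv_{j,l})^{k_{j,l}}\,\E\!\left[e^{i\sum_{j=1}^{m}\langle v_j,u(t_j,x)-u(t_{j-1},x)\rangle}\right] = \E\!\left[e^{i\sum_{j=1}^{m}\langle v_j,u(t_j,x)-u(t_{j-1},x)\rangle}\,H^{\beta}_{\pi_m}(Z,1)\right].
\end{equation*}
Taking absolute values, bounding the exponential by $1$, and then using the Cauchy--Schwarz inequality $\E[|H^\beta_{\pi_m}(Z,1)|]\le \|H^\beta_{\pi_m}(Z,1)\|_{0,2}$ gives
\begin{equation*}
\prod_{j=1}^{m}\prod_{l=1}^{d}|v_{j,l}|^{k_{j,l}}\left|\E\!\left[e^{i\sum_{j}\langle v_j,u(t_j,x)-u(t_{j-1},x)\rangle}\right]\right|\le \|H^\beta_{\pi_m}(Z,1)\|_{0,2}.
\end{equation*}

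Finally I would invoke Lemma \ref{lem H t1 tn inequality for Z}, which provides a constant $C$ (depending only on $m$ and the $k_{j,l}$) with $\|H^\beta_{\pi_m}(Z,1)\|_{0,2}\le C\prod_{\theta=1}^{M}(t_{i_\theta}-t_{i_\theta-1})^{-1/4}$. Since the pair $(j,l)$ appears $k_{j,l}$ times in $\beta$, this product equals $\prod_{j,l}(t_j-t_{j-1})^{-k_{j,l}/4}$, and rearranging yields precisely \eqref{Ineq alphha LND for u}. No step is really an obstacle once Theorem \ref{thm estimation det moins 1} and Lemma \ref{lem H t1 tn inequality for Z} are in hand; the one point that needs care is the bookkeeping that turns $\prod_{\theta}(t_{i_\theta}-t_{i_\theta-1})^{-1/4}$ into $\prod_{j,l}(t_j-t_{j-1})^{-k_{j,l}/4}$, i.e.\ ensuring the multi-index $\beta$ is chosen so that the product over $\theta$ matches the product over $(j,l)$ with the correct multiplicities. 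The uniformity of the constant in the partition $\pi_m$ (which is what allows $\varepsilon=T$ in Definition \ref{alphha LND}) is built into Theorem \ref{thm estimation det moins 1} and Lemma \ref{lem H t1 tn inequality for Z}, so no further truncation of the time interval is required.
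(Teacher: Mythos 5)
Your proposal is correct and takes exactly the approach the paper intends: the paper's own proof is the one-line remark that the result follows from the integration-by-parts formula \eqref{integration by parts} together with the estimate \eqref{H t1 tn inequality for Z}, and you have simply filled in the details (nondegeneracy of $Z$, choice of the multi-index $\beta$ repeating $(j,l)$ exactly $k_{j,l}$ times, and the Cauchy--Schwarz step $\E[|H^{\beta}_{\pi_m}(Z,1)|]\leq\|H^{\beta}_{\pi_m}(Z,1)\|_{0,2}$), which is also the strategy sketched in the paper's introduction.
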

\begin{proof}
  The proof is a simple consequence of the integration by parts  \eqref{integration by parts} and \eqref{H t1 tn inequality for Z}.
\qed\end{proof}

In order to use Kolmogorov's theorem to conclude various continuities of the local time $L(\xi,t)$ in $t$ and $\xi$, we seek to estimate the moments of the increments of $L(\xi,t)$.
\begin{lem}\label{lem Kolmogorov}
 Let $u(t,x)$ be given by \eqref{solution SHE}. Assume $d\leq 3$. Let $\tilde{L}(\xi,t)$ be given as in \eqref{L tilde}, therefore, for every $\xi,y\in \R^d$, $t,t+h\in [0,T]$, and even integer $m\geq 2$,
 \begin{equation}\label{t t plus h}
   \E\left[\tilde{L}(\xi,t+h)-\tilde{L}(\xi,t)\right]^{m}\leq C_m |h|^{m(1-\frac{d}{4})};
 \end{equation}
 \begin{equation}\label{t t plus h xi xi plus y}
 \begin{split}
     & \E\left[\tilde{L}(\xi+y,t+h)-\tilde{L}(\xi,t+h)-\tilde{L}(\xi+y,t)+\tilde{L}(\xi,t)\right]^{m} \\
      & \leq C_{m,\theta} \|y\|^{m\theta}|h|^{m(1-\frac{d}{4}-\frac{\theta}{4})},
 \end{split}
 \end{equation}
 where $0<\theta<(\frac{4-d}{2})\wedge1$.
\end{lem}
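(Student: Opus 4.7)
The plan is to start from the two moment formulas \eqref{E Lxth Lxt} and \eqref{E Lxkth Lxth Lxkt Lxt}, which already express the $m$-th moments of the relevant increments of $\tilde L$ as integrals over $(v_1,\dots,v_m)\in(\R^d)^m$ and $(t_1,\dots,t_m)\in[t,t+h]^m$ of the characteristic functional $\E[e^{i\sum_j\langle v_j,X_{t_j}-X_{t_{j-1}}\rangle}]$ with $t_0=0$. By symmetry in the $t_j$'s one may restrict to the ordered region $t\le t_1<\cdots<t_m\le t+h$ at the cost of a factor $m!$, so that the $\tfrac14$-LND bound of Theorem~\ref{thm alpha LND} applies.

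The main technical step is to choose, for each coordinate $(j,l)$, a non-negative integer $k_{j,l}$ in the $\tfrac14$-LND estimate and split $\R$ into $I_1^{(j)}=[-(t_j-t_{j-1})^{-1/4},\,(t_j-t_{j-1})^{-1/4}]$ and $I_2^{(j)}=\R\setminus I_1^{(j)}$, exactly as in the proof of Theorem~\ref{thm existence of local time}. Taking $k_{j,l}=0$ on $I_1^{(j)}$ and $k_{j,l}$ large enough on $I_2^{(j)}$ to ensure integrability at infinity, both contributions have the same scaling; one concludes that each coordinate contributes $(t_j-t_{j-1})^{-1/4}$ to the $v$-integration, so each full block contributes $(t_j-t_{j-1})^{-d/4}$.

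For \eqref{t t plus h}, this reduces matters to
\[
\int_{t\le t_1<\cdots<t_m\le t+h}\prod_{j=1}^m(t_j-t_{j-1})^{-d/4}\,dt_1\cdots dt_m\qquad (t_0=0).
\]
For $j\ge 2$ the change of variables $u_j=t_j-t_{j-1}$ gives $\int_0^h u^{-d/4}du\le C\,h^{1-d/4}$, while the $j=1$ factor $\int_t^{t+h}t_1^{-d/4}\,dt_1$ requires a short case analysis ($t<h$ versus $t\ge h$) showing it is also bounded by $C\,h^{1-d/4}$ uniformly in $t\in[0,T]$. This $j=1$ factor, arising from the convention $t_0=0$ in the LND definition, is the one delicate point I expect in the argument; it prevents an immediate $h^{m(1-d/4)}$ bound and forces the case split.

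For \eqref{t t plus h xi xi plus y}, I would combine the factorisation
\[
\prod_j\bigl(e^{-i\langle v_j-v_{j+1},\xi+y\rangle}-e^{-i\langle v_j-v_{j+1},\xi\rangle}\bigr)=\prod_je^{-i\langle v_j-v_{j+1},\xi\rangle}\bigl(e^{-i\langle v_j-v_{j+1},y\rangle}-1\bigr)
\]
with the elementary inequality $|e^{iu}-1|\le 2|u|^\theta$ to extract $\|y\|^{m\theta}$ and leave $\prod_j\|v_j-v_{j+1}\|^\theta$ to fight against the LND bound. Expanding $\|v_j-v_{j+1}\|^\theta\le\|v_j\|^\theta+\|v_{j+1}\|^\theta$ and $\|v_k\|^\theta\le C\sum_l|v_{k,l}|^\theta$ produces a finite sum of terms of the form $\prod_{k,l}|v_{k,l}|^{b_{k,l}\theta}$ with $\sum_{k,l}b_{k,l}=m$. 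Redoing the $I_1^{(j)}/I_2^{(j)}$ split for each such term, coordinate $(k,l)$ now contributes $(t_k-t_{k-1})^{-(1+b_{k,l}\theta)/4}$, so block $k$ contributes $(t_k-t_{k-1})^{-(d+a_k\theta)/4}$ with $a_k=\sum_l b_{k,l}\in\{0,1,2\}$ and $\sum_k a_k=m$. The hypothesis $\theta<(\tfrac{4-d}{2})\wedge 1$ keeps these exponents strictly below $1$, and the same time integration (with the $j=1$ case analysis) yields $h^{m(1-d/4)-m\theta/4}$, giving the announced bound $C\,\|y\|^{m\theta}\,|h|^{m(1-d/4-\theta/4)}$.
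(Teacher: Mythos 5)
Your proposal is correct and follows essentially the same route as the paper's own proof: start from the moment formulas \eqref{E Lxth Lxt}--\eqref{E Lxkth Lxth Lxkt Lxt}, extract $\|y\|^{m\theta}$ via $|1-e^{i\rho}|\le 2^{1-\theta}|\rho|^\theta$, restrict to the ordered simplex at cost $m!$, expand $\prod_j\|v_j-v_{j+1}\|^\theta$ into terms $\prod_j\|v_j\|^{\epsilon_j\theta}$ with $\epsilon_j\in\{0,1,2\}$ and $\sum\epsilon_j=m$, distribute the power over coordinates, and kill the $v$-integral coordinate by coordinate with the $\tfrac14$-LND bound and the $I_1^{(j)}/I_2^{(j)}$ split.

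The one real difference is in how the final time integral is handled, and you are right to flag the $j=1$ factor as the delicate point. The paper instead quotes Ehm's closed-form identity \eqref{Ehm equality}, whose convention is $s_0=t$, and applies it to an integrand where the LND bound produces $t_0=0$, i.e.\ a factor $t_1^{-b_1}$ rather than $(t_1-t)^{-b_1}$. This is resolved by the pointwise domination $t_1^{-b_1}\le (t_1-t)^{-b_1}$ on $\{t_1\ge t\}$, after which Ehm's formula gives $h^{m-\sum b_j}$ at once; the paper leaves this domination implicit. Your alternative — bounding the simplex integral by a product of one-dimensional integrals, $\int_0^h u^{-b}du\le C h^{1-b}$ for $j\ge2$, plus the observation (concavity of $x\mapsto x^{1-b_1}$, or your $t<h$ vs.\ $t\ge h$ split) that $\int_t^{t+h}t_1^{-b_1}dt_1\le C h^{1-b_1}$ uniformly in $t\in[0,T]$ — is slightly more elementary, avoids quoting Ehm, and makes the uniformity in $t$ visible; it buys transparency at the cost of the exact Gamma-function constants. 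Both yield the same $h^{m(1-\frac{d}{4}-\frac{\theta}{4})}$, with the restriction $\theta<(\frac{4-d}{2})\wedge1$ ensuring all exponents $b_j=(d+\epsilon_j\theta)/4$ stay below $1$.
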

\begin{proof}
  We prove just the second inequality; the first one follows the same lines. We consider
only $h>0$ such that $t+h\in [0,T]$, the other case follows the same way. According to \eqref{E Lxkth Lxth Lxkt Lxt}, we get

\begin{equation*}
    \begin{split}
    &\E[\tilde{L}(\xi+y,t+h)-\tilde{L}(\xi,t+h)-\tilde{L}(\xi+y,t)+\tilde{L}(\xi,t)]^m \\
         &=\frac{1}{(2\pi)^{md}}\int_{(\R^d)^m}\int_{[t,t+h]^m}\prod_{j=1}^{m}\left(e^{-i\left<v_j-v_{j+1},\xi+y\right>}-e^{-i\left<v_j-v_{j+1},\xi\right>}\right)\\
         &\qquad\qquad\qquad\qquad\qquad\qquad\times\E\left[e^{i\sum_{j=1}^{m}\left<v_j,u(t_j,x)-u(t_{j-1},x)\right>}\right]\prod_{j=1}^{m}dt_j\prod_{j=1}^{m}dv_j.
    \end{split}
\end{equation*}
By the elementary inequality $|1-e^{i\rho}|\leq 2^{1-\theta}|\rho|^{\theta}$ for all $0<\theta<1$ and $\rho\in \R$, we have
\begin{equation}\label{t t plus h xi xi plus y J}
  \begin{split}
       & \E[\tilde{L}(\xi+y,t+h)-\tilde{L}(\xi,t+h)-\tilde{L}(\xi+y,t)+\tilde{L}(\xi,t)]^m \\
       & \leq 2^{-md-\theta+1}\pi^{-md}\|y\|^{m\theta}\mathcal{J}(m,\theta),
  \end{split}
\end{equation}
where
\begin{equation*}
  \begin{split}
      &\mathcal{J}(m,\theta)  \\
       & =\int_{[t,t+h]^m}\int_{(\R^d)^m}\prod_{j=1}^{m}\|v_j-v_{j+1}\|^{\theta}\left|\E\left[e^{i\sum_{j=1}^{m}\left<v_j,u(t_j,x)-u(t_{j-1},x)\right>}\right]\right|\prod_{j=1}^{m}dv_j\prod_{j=1}^{m}dt_j.
  \end{split}
\end{equation*}
We replace the integration over the domain $[t,t+h]^m$ by the integration over the
subset  $\Lambda=\{t\leq t_1<\cdots<t_m\leq t+h\}$, hence we obtain
\begin{equation*}
  \begin{split}
      &\mathcal{J}(m,\theta)  \\
       & =m!\int_{\Lambda}\int_{(\R^d)^m}\prod_{j=1}^{m}\|v_j-v_{j+1}\|^{\theta}\left|\E\left[e^{i\sum_{j=1}^{m}\left<v_j,u(t_j,x)-u(t_{j-1},x)\right>}\right]\right|\prod_{j=1}^{m}dv_j\prod_{j=1}^{m}dt_j,
  \end{split}
\end{equation*}
where $t_0=0$ and $v_{m+1}=0$. By the fact that $\|a-b\|^{\theta} \leq \|a\|^{\theta}+\|b\|^{\theta}$ for all $0<\theta<1$ and $a,b\in \R^d$, it follows that
\begin{equation}\label{}
  \prod_{j=1}^{m}\|v_j-v_{j+1}\|^{\theta} \leq  \prod_{j=1}^{m} \left(\|v_j\|^{\theta}+\|v_{j+1}\|^{\theta}\right).
\end{equation}
 Note that the right side of this last inequality is at most equal to a finite sum of terms each of the
form $\prod_{j=1}^{m}\|v_j\|^{\epsilon_j \theta}$, where $\epsilon_j=0,1,$ or $2$ and $\sum_{j=1}^{m}\epsilon_j=m$. Therefore
\begin{equation}\label{J without alpha LND}
  \begin{split}
      &\mathcal{J}(m,\theta)\leq m! \sum_{(\epsilon_1,\cdots,\epsilon_m)\in \{0,1,2\}^m}\int_{\Lambda}\int_{(\R^d)^m}\prod_{j=1}^{m}\|v_j\|^{\epsilon_j\theta}\\
       &\qquad\qquad\qquad\qquad\qquad\qquad\quad\times\left|\E\left[e^{i\sum_{j=1}^{m}\left<v_j,u(t_j,x)-u(t_{j-1},x)\right>}\right]\right|\prod_{j=1}^{m}dv_j\prod_{j=1}^{m}dt_j.
  \end{split}
\end{equation}
On the other hand, by the $\frac{1}{4}$-LND property, i.e., Theorem  \ref{thm alpha LND} we get for every nonnegative integers $m\geq 2$, $k_{j,l}$, for $j=1,\cdots,m$ and $l=1,\cdots,d$, there exists a constant $c=c(m,k_{j,l})$ such that
  \begin{equation}\label{Ineq alphha LND for u 2}
        \left|\E\left[e^{i\sum_{j=1}^{m}\left<v_j,u(t_j,x)-u(t_{j-1},x)\right>}\right]\right|
         \leq\frac{c}{\prod_{j=1}^{m}\prod_{l=1}^{d}|v_{j,l}|^{k_{j,l}}(t_j-t_{j-1})^{k_{j,l}/4}},
  \end{equation}
where $v_j=(v_{j,1},\cdots,v_{j,d})$. Put $I^j_1=[-1/(t_j-t_{j-1})^{1/4},1/(t_j-t_{j-1})^{1/4}]$ and $I^j_2=\R\setminus I^j_1$, Therefore
\begin{equation}\label{R d m}
  (\R^d)^m=\bigcup_{i_{j,l}\in \{1,2\}\atop j=1,\cdots,m; l=1,\cdots,d}\prod_{j=1}^{m} I^j_{i_{j,1}}\times\cdots\times I^j_{i_{j,d}}.
\end{equation}
Set, for $j=1,\cdots,m$ and $l=1,\cdots,d$,
$$k_{j,l}(i_{j,l})=\left\{
  \begin{array}{ll}
    0, & \text{if}\quad i_{j,l}=1; \\
    4, & \text{if}\quad i_{j,l}=2,
  \end{array}
\right.$$
Hence, by \eqref{J without alpha LND}, \eqref{Ineq alphha LND for u 2}, and \eqref{R d m}, we obtain
\begin{equation}\label{}
  \begin{split}
      \mathcal{J}(m,\theta)&\leq m!c\sum_{i_{j,l}\in \{1,2\}\atop j=1,\cdots,m; l=1,\cdots,d} \sum_{(\epsilon_1,\cdots,\epsilon_m)\in \{0,1,2\}^m}\int_{\Lambda}\int_{\prod_{j=1}^{m} I^j_{i_{j,1}}\times\cdots\times I^j_{i_{j,d}}}  \\
       & \times \frac{\prod_{j=1}^{m}\|v_j\|^{\epsilon_j\theta}}{\prod_{j=1}^{m}\prod_{l=1}^{d}|v_{j,l}|^{k_{j,l}(i_{j,l})}(t_j-t_{j-1})^{k_{j,l}(i_{j,l})/4}}\prod_{j=1}^{m}dv_j\prod_{j=1}^{m}dt_j.
  \end{split}
\end{equation}
We remark that
\begin{equation*}
   \prod_{j=1}^{m}\|v_j\|^{\epsilon_j\theta} \leq \prod_{j=1}^{m} \left(|v_{j,1}|^{\epsilon_j\theta}+\cdots+|v_{j,d}|^{\epsilon_j\theta}\right) =\sum_{l_1,\cdots,l_d\in \{1,\cdots,d\}}\prod_{j=1}^{m} |v_{j,l_j}|^{\epsilon_j\theta}.
\end{equation*}
Therefore
\begin{equation*}
  \begin{split}
      \mathcal{J}(m,\theta)&\leq m!c\sum_{l_1,\cdots,l_d\in \{1,\cdots,d\}}\sum_{i_{j,l}\in \{1,2\}\atop j=1,\cdots,m; l=1,\cdots,d} \sum_{(\epsilon_1,\cdots,\epsilon_m)\in \{0,1,2\}^m}\int_{\Lambda}\int_{\prod_{j=1}^{m} I^j_{i_{j,1}}\times\cdots\times I^j_{i_{j,d}}} \\
       &\times \frac{\prod_{j=1}^{m} |v_{j,l_j}|^{\epsilon_j\theta}}{\prod_{j=1}^{m}\prod_{l=1}^{d}|v_{j,l}|^{k_{j,l}(i_{j,l})}(t_j-t_{j-1})^{k_{j,l}(i_{j,l})/4}}\prod_{j=1}^{m}dv_j\prod_{j=1}^{m}dt_j.
  \end{split}
\end{equation*}
According to Fubini's theorem, the right side of the above expression is equal to
\begin{equation*}
  \begin{split}
      & \quad\qquad m!c\sum_{l_1,\cdots,l_d\in \{1,\cdots,d\}}\sum_{i_{j,l}\in \{1,2\}\atop j=1,\cdots,m; l=1,\cdots,d} \sum_{(\epsilon_1,\cdots,\epsilon_m)\in \{0,1,2\}^m}\int_{\Lambda}\prod_{j=1}^{m}\int_{ I^j_{i_{j,1}}\times\cdots\times I^j_{i_{j,d}}} \\
       &\qquad\qquad\qquad\qquad\times \frac{ |v_{j,l_j}|^{\epsilon_j\theta}}{\prod_{l=1}^{d}|v_{j,l}|^{k_{j,l}(i_{j,l})}(t_j-t_{j-1})^{k_{j,l}(i_{j,l})/4}} dv_j\prod_{j=1}^{m}dt_j.
  \end{split}
\end{equation*}
\begin{equation}\label{J m theta prod}
  \begin{split}
      &= m!c\sum_{l_1,\cdots,l_d\in \{1,\cdots,d\}}\sum_{i_{j,l}\in \{1,2\}\atop j=1,\cdots,m; l=1,\cdots,d} \sum_{(\epsilon_1,\cdots,\epsilon_m)\in \{0,1,2\}^m}\\
      &\qquad\qquad\times\int_{\Lambda}\prod_{j=1}^{m}\prod_{l=1\atop l\neq l_j}^{d}\int_{ I^j_{i_{j,l}}} \frac{ 1}{|v_{j,l}|^{k_{j,l}(i_{j,l})}(t_j-t_{j-1})^{k_{j,l}(i_{j,l})/4}} dv_{j,l}\\
      &\qquad\qquad\times\int_{ I^j_{i_{j,l_j}}} \frac{1}{|v_{j,l_j}|^{k_{j,l_j}(i_{j,l_j})-\epsilon_j\theta}(t_j-t_{j-1})^{k_{j,l_j}(i_{j,l_j})/4}} dv_{j,l_j}\prod_{j=1}^{m}dt_j.
  \end{split}
\end{equation}
\begin{itemize}
  \item If $i_{j,l}=1$ or $2$ with $l\neq l_j$, then we have
  $$\int_{ I^j_{i_{j,l}}} \frac{ 1}{|v_{j,l}|^{k_{j,l}(i_{j,l})}(t_j-t_{j-1})^{k_{j,l}(i_{j,l})/4}} dv_{j,l}=\frac{K_1}{(t_j-t_{j-1})^{1/4}},$$
  where the constant $K_1$ depends only on $i_{j,l}$.
  \item If $i_{j,l_j}=1$ or $2$, then we get
  $$\int_{ I^j_{i_{j,l_j}}} \frac{1}{|v_{j,l_j}|^{k_{j,l_j}(i_{j,l_j})-\epsilon_j\theta}(t_j-t_{j-1})^{k_{j,l_j}(i_{j,l_j})/4}} dv_{j,l_j}=\frac{K_2}{(t_j-t_{j-1})^{(1+\epsilon_j\theta)/4}},$$
  where the constant $K_2$ depends on $i_{j,l_j}$, $\theta$, and $\epsilon_j$ such that $\sup_{\theta,\epsilon_j}K_2<\infty$.
\end{itemize}
Combining the above discussion with \eqref{J m theta prod}, we obtain
\begin{equation}\label{J prod Ehm}
  \begin{split}
     \mathcal{J}(m,\theta) &\leq  m!c_1\sum_{l_1,\cdots,l_d\in \{1,\cdots,d\}}\sum_{i_{j,l}\in \{1,2\}\atop j=1,\cdots,m; l=1,\cdots,d} \sum_{(\epsilon_1,\cdots,\epsilon_m)\in \{0,1,2\}^m}\\
       &\times\int_{\Lambda}\prod_{j=1}^{m}\frac{1}{(t_j-t_{j-1})^{(d+\epsilon_j\theta)/4}}\prod_{j=1}^{m}dt_j.
  \end{split}
\end{equation}
According to  an elementary calculation (cf. Ehm \cite{Ehm}), for every $m\geq 1$, $h>0$, and $b_j<1$,
\begin{equation}\label{Ehm equality}
  \int_{t\leq s_1<\cdots<s_m\leq t+h}\prod_{j=1}^{m}\frac{1}{(s_j-s_{j-1})^{b_j}}\prod_{j=1}^{m}ds_j=h^{m-\sum_{j=1}^{m}b_j}\frac{\prod_{j=1}^{m}\Gamma(1-b_j)}{\Gamma(1+k-\sum_{j=1}^{m}b_j)},
\end{equation}
where $s_0=t$. By \eqref{J prod Ehm} and \eqref{Ehm equality}, it follows that for $0<\theta<(\frac{4-d}{2})\wedge 1$ and $b_j=\frac{d+\epsilon_j\theta}{4}$,
\begin{equation}
     \mathcal{J}(m,\theta) \leq \tilde{C}(m,\theta) h^{m(1-\frac{d+\theta}{4})}.
\end{equation}
Finally, by \eqref{t t plus h xi xi plus y J} we get
\begin{equation*}
        \E[\tilde{L}(\xi+y,t+h)-\tilde{L}(\xi,t+h)-\tilde{L}(\xi+y,t)+\tilde{L}(\xi,t)]^m \\
        \leq C(m,\theta)\|y\|^{m\theta}h^{m(1-\frac{d+\theta}{4})}.
\end{equation*}
Which finishes the proof of  Lemma \ref{lem Kolmogorov}.
\qed\end{proof}

\begin{proof}[Proof of Theorem \ref{local time SHE} \normalfont{\textbf{(ii)}}]
  The proof is a consequence of Lemma \ref{lem Kolmogorov} and \cite[Theorem 3.1]{Berman72}.
\qed\end{proof}
\begin{proof}[Proof of Corollary \ref{cor nonHolder}]
  The proof is a simple application of Theorem \ref{non Holder} and Theorem \ref{local time SHE} \normalfont{\textbf{(ii)}}.
\qed\end{proof}
\begin{rem}
  Let $Y=(Y_t)_{t\in[0,T]}$ be an $\R^d$-valued stochastic process which is $\alpha$-LND with $\alpha\in (0,1)$. By the same calculations as in Section \ref{4}, we can get the following results:
  \begin{enumerate}
    \item Assume that $d<\frac{1}{\alpha}$. Then the process $Y$ has a local time $L(\xi,t)$. Moreover, for every fixed $t$, $L(\bullet,t)\in H^{\beta}(\R^d)$ for $\beta<(\frac{1}{\alpha}-d)/2$, here $H^{\beta}(\R^d)$ is the Sobolev space of index $\beta$.
    \item Assume $d<\frac{1}{\alpha}$. The local time of the process $Y$ has a version, denoted by $L(\xi,t)$, which is s jointly continuous in $(\xi,t)$ almost surely, and which is $\gamma$-H\"{o}lder continuous in $t$, uniformly in $\xi$, for all $\gamma<1-d\alpha$: there exist two random variables $\eta$ and $\delta$ which are almost surely finite and positive such that
        $$\sup_{\xi\in \R^d}|L(\xi,t+h)-L(\xi,t)|\leq \eta|h|^{\gamma},$$
        for all $t,t+h\in [0,T]$ and all $|h|<\delta$.
  \end{enumerate}
\end{rem}

\begin{acknowledgements}
The authors would like to acknowledge the comments, questions, and remarks of the referees. Their help improved the quality and clarity of this paper.
\end{acknowledgements}

\section*{Availability of data and materials}
Data sharing not applicable to this article as no datasets were generated or analysed during the current study.
%
%

\bibliographystyle{spmpsci}      
\bibliography{library}   

%
%

\end{document}